\def\1{\mathbbm{1}}
\newcommand{\commentout}[1]{}
\newcommand{\R}{\mathbb{R}}
\newcommand{\N}{\mathbb{N}}
\newcommand{\Z}{\mathbb{Z}}
\newcommand {\e}  {\mathrm{e}}
\newcommand {\diff} {\mathrm{d}}
\newcommand {\Chi} {{\bf \raise 2pt \hbox{$\chi$}} }
\newcommand{\norme}[1]{\left\lVert#1\right\rVert}
\newcommand{\pscal}[2]{\left\langle ~#1~\middle\vert~#2~\right\rangle}
\DeclareMathOperator\erf{erf}
\newtheorem{theorem}{Theorem}[section]
\newtheorem{maintheorem}[theorem]{Main Theorem}
\newtheorem{lemma}[theorem]{Lemma}
\newtheorem{hyp}[theorem]{Assumption}
\newtheorem{remark}[theorem]{Remark}
\newtheorem{example}[theorem]{Example}
\newtheorem{corollary}[theorem]{Corollary}
\numberwithin{equation}{section}
\def\@cite#1#2{\textup{[{#1\if@tempswa, #2\fi}]}}
\title{\Large \bf Noise-driven bifurcations in a nonlinear Fokker--Planck system describing stochastic neural fields}
\author{Jos\'e A. Carrillo\thanks{Mathematical Institute, University of Oxford, Oxford OX2 6GG, UK (carrillo@maths.ox.ac.uk)} \and Pierre Roux\thanks{Mathematical Institute, University of Oxford, Oxford OX2 6GG, UK (pierre.roux@maths.ox.ac.uk)} \and Susanne Solem\thanks{Department of Mathematics, Norwegian University of Life Sciences, NO-1433 \AA s, Norway (susanne.solem@nmbu.no)}}
\date{\today}
\begin{document}

\maketitle
\allowdisplaybreaks

\begin{abstract}
    \noindent The existence and characterisation of noise-driven bifurcations from the spatially homogeneous stationary states of a nonlinear, non-local Fokker--Planck type partial differential equation describing stochastic neural fields is established. The resulting theory is extended to a system of partial differential equations modelling noisy grid cells. It is shown that as the noise level decreases, multiple bifurcations from the homogeneous steady state occur. Furthermore, the shape of the branches at a bifurcation point is characterised locally.
    The theory is supported by a set of numerical illustrations of the condition leading to bifurcations, the patterns along the corresponding local bifurcation branches, and the stability of the homogeneous state and the most prevalent pattern: the hexagonal one.
\end{abstract}

\pagenumbering{arabic}

\section{Introduction}
We establish the existence of noise-driven bifurcations from the spatially homogeneous steady states of the partial differential equation (PDE)

\begin{equation}\label{eqn:2}
	\tau\dfrac{\partial \rho}{\partial t} = -\dfrac{\partial }{\partial s}\left[  \left(  \Phi\left( \int_{\mathbb T^d} W(x-y)\int_0^{+\infty}s\rho(y,s,t)\diff s \diff y + B\right) -s  \right)\rho(x,s,t)   \right] + \sigma \dfrac{\partial^2 \rho}{\partial s^2},
\end{equation}
which aims to describe a network of noisy neurons.
In this nonlinear Fokker--Planck type model, $\rho(x,s,t)$ is at time $t$ the probability density of neurons at location $x\in \mathbb T^d$, with activity level $s\in[0,+\infty)$. Here, $\mathbb T^d$ is the $d$-dimensional square torus of length $L$ endowed with the quotient topology, $\mathbb T^d := \sfrac{\R^d}{( L \Z)^d}$, as the neurons activity levels are assumed to be spatially connected on a toroidal geometry. The interactions between the neurons are modeled through the periodic connectivity function $W$ weighting the averaging in space of the mean activity levels of the neurons. The network of neurons is assumed to receive a constant external input through $B$, and the  total input signal to each neuron is then controlled through the modulation function $\Phi$. The parameter $\tau$ is the system's relaxation time, and $\sigma$ determines the noise level in the network. To prohibit the activity level from becoming negative, a no-flux boundary condition is prescribed at $s=0$:
\begin{equation*}
	\left( \Phi\left( \int_{\mathbb T^d} W(x-y)\int_0^{+\infty}s\rho(y,s,t)\diff s \diff y + B\right)\rho(x,s,t) - \sigma \dfrac{\partial \rho}{\partial s}(x,s,t) \right)\bigg|_{s=0} = 0,
\end{equation*}
for all $(x,t)\in\mathbb T^d\times\R_+$. Last, we make the assumption that the neurons are distributed homogeneously in space, thereby imposing that any initial datum $\rho^0$ satisfies
\begin{equation}\label{eqn:mass_x}
\forall x\in\mathbb T^d, \qquad \int_{0}^{+\infty} \rho^0(x,s)\diff s = \dfrac1{L^d}.
\end{equation}
Since there is no movement of neurons in space and owing to the no-flux boundary condition, it can be checked that for any smooth solution this property propagates in time to the density $\rho(x,\cdot,t)$.

Our results also apply to the following generalisation of \eqref{eqn:2} :
\begin{align}\label{eq:4PDE}
\tau \frac{\partial \rho^\beta}{\partial t} =
-\frac{\partial}{\partial s}\Bigg(
\Big[\Phi^\beta(x,t) -s\Big] \rho^\beta
\Bigg) + \sigma \frac{\partial^2 \rho^\beta}{\partial s^2},
\end{align}
where $\Phi^\beta(x,t)$ is given by
\begin{align}\label{eq:phi}
\Phi^\beta(x,t) = \Phi \left(\frac{1}{4}\sum_{\beta'=1}^4 \int_{\mathbb T^d} W^{\beta'}(x-y) \int_{0}^\infty s \rho^{\beta'} (y,s,t)\, \diff s \diff y + B^\beta(t) \right),
\end{align}
for $\beta =1,2,3,4$, with corresponding no-flux boundary and mass normalisation conditions. In the case $d=2$, \eqref{eq:4PDE} models a network of noisy grid cells \cite{CHS}. Grid cells, discovered in \cite{gridcells}, are neurons which play a pivotal role in spatial representation by firing (emitting spikes) in a hexagonal pattern as a mammal moves around in an open environment, see \cite{tenyears} for a short summary. In addition to the mechanisms already incorporated in \eqref{eqn:2}, \eqref{eq:4PDE} also splits the neurons into four different groups, each having their orientation preference in physical space: $\beta =1,2,3,4$ (north, west, south, east). To model the observed behaviour of grid cells, an orientation preference dependent shift $r^\beta$ is included in the connectivity $W^\beta(x-y)=W(x-y-r^\beta)$, and \eqref{eq:4PDE} is connected with the movement of a mammal through the orientation and time dependent input $B^\beta$. We note however, that time dependent behaviour is beyond the scope of the present manuscript. In the rest of the manuscript we therefore assume $B^\beta(t) = B$, where $B$ is a constant, as was done in \cite{CHS}.

In \cite{tenyears} understanding the effects of noise on networks of grid cells was emphasized as a challenge, and this sparked the development and initial study of \eqref{eq:4PDE} in \cite{CHS}. The model \eqref{eq:4PDE}, which is based on the ODE models of \cite{coueyetal,burakfiete}, has been rigorously shown to be the mean-field limit of a network of stochastic grid cells with Gaussian independent noise \cite{CCS21} by adapting Sznitman's coupling method \cite{Sznitman1991}. Furthermore, novel evidence pointing towards a toroidal connectivity for grid cells has recently been provided in \cite{Gardner2022} by using topological data analysis tools.

The authors in \cite{CHS} showed the existence and uniqueness of spatially homogeneous stationary states for any given noise strength $\sigma >0$ to equations \eqref{eqn:2} and \eqref{eq:4PDE} under suitable assumptions on the model parameters. Notice that \eqref{eqn:2} and \eqref{eq:4PDE} share the same spatially homogeneous solutions. This one-parameter family of spatially homogeneous solutions will be denoted by $(\rho_\infty^\kappa, \kappa)$ where $\kappa=1/\sigma$ for ease of notation. The main contribution in \cite[Theorem 3.6]{CHS} implies a linear stability criteria for $\rho_\infty^\kappa$ as a solution to \eqref{eqn:2}. More precisely, $\rho_\infty^\kappa$ is linearly asymptotically stable in $L^2\big({\mathbb T}^d  \times \R_+\big)$ for \eqref{eqn:2} as long as
\begin{align} \label{eq:linearstabilitycondintro}
\dfrac{\tilde W (k)}{\Theta(k)} < \frac{1}{\kappa L^{\frac d2}  (\Phi_0^\kappa)' \int_0^\infty (s-L^d\bar{\rho}_\infty^\kappa)^2 \rho_\infty^\kappa \diff s},    
\end{align}
for all $k \in \mathbb{N}$ with $ (\Phi_0^\kappa)':=\Phi'(W_0 \bar\rho_\infty^\kappa + B)$, $\bar\rho_\infty^\kappa$ the mean in $s$ of $\rho_\infty^\kappa$, $W_0$ the integral of $W$ over $\mathbb T^d$, and $\tilde W (k)$ the Fourier mode of the periodic connectivity kernel $W$. Here, $\Theta(k)$ is a normalization factor for the Fourier coefficient of a periodic function in $L^2\big({\mathbb T}^d\big)$ whose precise definition is found in Section \ref{sec:prelim}. Moreover, \cite[Section 4]{CHS} provides numerical evidence of the existence of bifurcation branches emanating from the curve $(\rho_\infty^\kappa, \kappa)$ for \eqref{eq:4PDE} leading to hexagonal patterns in the case of a radially symmetric connectivity kernel $W$, which are reminiscent of similar stationary patterns for the Wilson--Cowan equations \cite{EC1979}, see \cite{Murray03}.

In this work, we obtain sharp conditions for the appearance of local bifurcation branches from $(\rho_\infty^\kappa, \kappa)$ based on the Fourier modes of the connectivity kernel $W$ for both \eqref{eqn:2} and \eqref{eq:4PDE}. Our main result, stated in Main Theorem \ref{thm:mainmain}, shows that for any value of $\kappa$ such that there exists a unique Fourier mode $k^*$ of $W$ with
\begin{align} \label{eq:branchintro}
\dfrac{\tilde W (k^*)}{\Theta(k^*)} = q(\bar \rho_\infty^\kappa, \kappa):=\dfrac{1}{L^{\frac d2} (\Phi_0^\kappa)' \left( \dfrac1{L^d} - L^d\bar \rho_\infty^\kappa  \left(\bar \rho_\infty^\kappa - \dfrac{ \Phi_0^\kappa}{L^d}\right)\kappa \right)},    
\end{align}
 leads to a local bifurcation branch of spatially in-homogeneous stationary solutions to \eqref{eqn:2}. This result is illustrated in Fig. \ref{fig:intro-illustration} for a particular choice of the parameters in the model, see Section 5 for more details. We will show in Section 2 that $q(\bar \rho_\infty^\kappa, \kappa)$, the right hand side of \eqref{eq:branchintro}, coincides with the right hand side of \eqref{eq:linearstabilitycondintro}. As a byproduct, Main Theorem \ref{thm:mainmain} shows that the linear asymptotic stability condition \eqref{eq:linearstabilitycondintro} is sharp under suitable assumptions for a large family of connectivity functions $W$. In particular, we only require that the connectivity kernel $W$ is coordinate-wise even as opposed to the common assumption of a radially symmetric kernel in neural field models. The study of kernels being less symmetric is relevant from a neuroscience perspective as connections between neurons can deteriorate, or be locally disturbed by activity outside the network.

Analogous results with a slightly different condition than \eqref{eq:branchintro}, involving the shifts $r^\beta$, hold for \eqref{eq:4PDE}, see Section 4. More precisely, we show that a rigorous study of the bifurcations of the one component model \eqref{eqn:2} is sufficient to understand the bifurcations of the four component model \eqref{eq:4PDE}, and that the conditions leading to bifurcations for \eqref{eq:4PDE} are a simple modification of the conditions for \eqref{eqn:2}.

\begin{figure}[ht]
    \centering
    \includegraphics[width=0.45\textwidth]{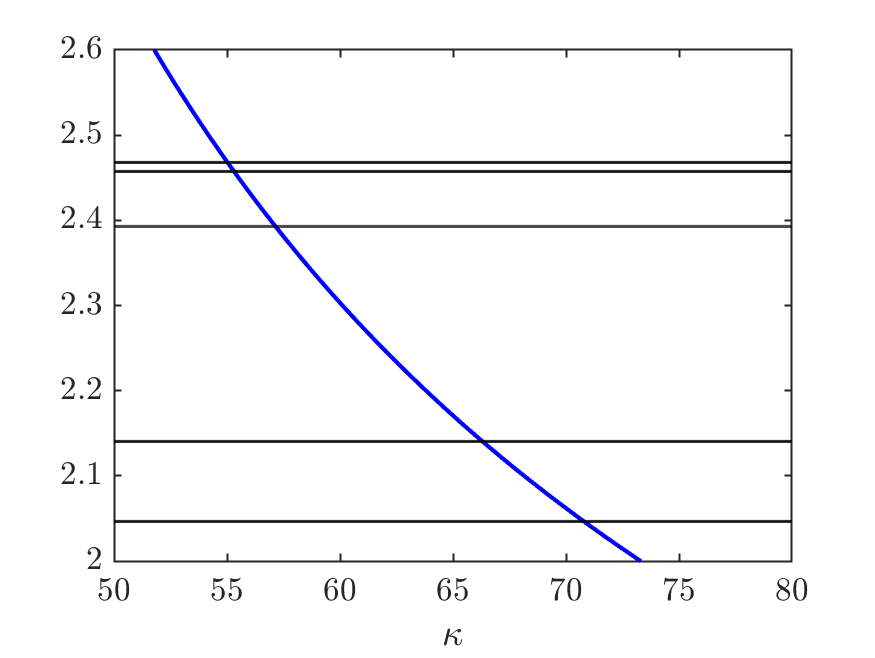}
     \includegraphics[width=0.45\textwidth]{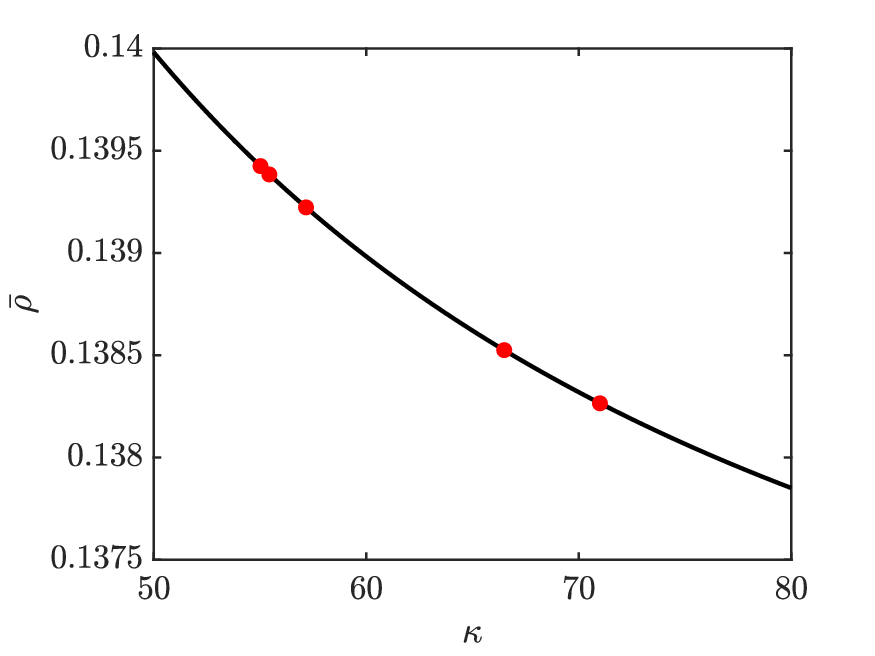}
    \caption{Plots illustrating Main Theorem \ref{thm:mainmain} for the same choice of parameters as in the left plot of Figure \ref{fig:radsym-vs-nonradsym}. Left: illustration of the bifurcation condition \eqref{eq:branchintro} as crossings between the Fourier modes ${\tilde W (k^*)}/{\Theta(k^*)}$ of the connectivity function $W$ (black, horizontal lines) and the function $q(\bar \rho_\infty^\kappa, \kappa)$, right: corresponding bifurcation points (red dots) along the spatially homogeneous branch (black line).}
    \label{fig:intro-illustration}
\end{figure}

The local bifurcation analysis is obtained by adapting the strategy of \cite{CGPS20} developed for nonlinear nonlocal McKean--Vlasov equations on the torus. The authors in \cite{CGPS20} used the classical approach of Crandall--Rabinowitz local bifurcation theorems in the right functional setting to show branching from constant stationary states. However, in our setting we have to deal with spatially homogeneous steady states that are not constants in $s$. We therefore apply Crandall--Rabinowitz arguments to the averages in the activity level of stationary states instead. We will see that the stationary states $\rho_\infty^\kappa$ implicitly depend on the bifurcation parameter $\kappa$, which leads to cumbersome expressions for the required Fr\'echet derivatives. 

Another difficulty is that there is no gradient flow structure for \eqref{eqn:2}, or in other words, no variational formulation in terms of steepest descent of free energies as in \cite{CGPS20}. Therefore, although the numerical bifurcation diagrams of \cite{CHS} suggest the existence of discontinuous phase transitions, we cannot adapt the tools of \cite{CGPS20} to rigorously discover them. We thus mainly focus on obtaining all local bifurcation branches emanating from the family $(\rho_\infty^\kappa, \kappa)$ in this work. 

The strategy of \cite{CGPS20} does enable us to establish the condition leading to bifurcation points \eqref{eq:branchintro} and the existence of certain branches, but is not sufficient for characterising multiple branches emanating from the same point when the bifurcation problem is equivariant, for example when the connectivity kernel is radially symmetric. For this, we need the equivariant branching lemma (see for example \cite[Theorem 2.3.2]{CL-equivariantbif} and the classification of 2D and 3D planforms in \cite{Dionne}). A recent relevant paper in this direction is \cite{FSV2022} where bifurcations with respect to a gain parameter of a Wilson--Cowan-type equation \cite{WC1, WC2} for the primary visual cortex are studied.

Let us finally mention that noise driven patterns and bifurcations in related computational neuroscience models have been studied by several authors \cite{bressloff2012, TouboulPhysD,KE13,K14,MB, TouboulSIAM, B19, BAC19,RP19}, see \cite{FTC09,TCL16} for noise in the connectivity, largely from a stochastic viewpoint. Moreover, a rigorous derivation of mesoscopic models by coarsening from stochastic differential equations via mean-field limits have also been developed in \cite{MS02, FTC09,FI15,TouboulPhysD,TouboulSIAM,CT18}. These works deal with spatially extended systems of neural networks modelled by their voltage with random connectivity interactions using large deviation principles \cite{AG95,G97}.

The outline of the manuscript is as follows. We start by introducing the setting in Section \ref{sec:prelim}, before providing a summary of the main results concerning the bifurcations of \eqref{eqn:2}. We end the section by linking it to the linear stability condition provided for \eqref{eq:4PDE} in \cite{CHS}. Section \ref{sec:bifurcations} is devoted to proving the main results, which includes establishing some results on the spatially homogeneous stationary state itself. An outline of the generalisation of the main results to the four component model \eqref{eq:4PDE} is provided in Section \ref{sec:4component}. The theoretical results are then illustrated through a series of plots in Section \ref{sec:numerics}, before providing a set of future perspectives in Section \ref{sec:future}.


\section{Preliminaries, main results, and relation to stability}\label{sec:prelim}
In this section we set the stage for establishing the existence of noise-driven bifurcations of \eqref{eqn:2} and \eqref{eq:4PDE}. First, we provide details concerning the spaces we will work in, before stating the main result for \eqref{eqn:2} in Main Theorem \ref{thm:mainmain}. We also discuss how these results are linked to the stability conditions for the Mckean-Vlasov equation in \cite{CGPS20} and the grid cell model in \cite{CHS}.

\subsection{Characterisation of the stationary states}

Henceforth, we denote $\N=\{0,1,\dots\}$, $\R_+^*=(0,+\infty)$,
\[  \bar \rho(x) = \int_{0}^{+\infty} s\rho(x,s)\diff s  \]
the activity average of $\rho$ at $x \in \mathbb T^d $, and
\[   W_0 = \int_{\mathbb T^d} W(x) \diff x.   \]

By setting the left hand side of \eqref{eqn:2} to zero and integrating with respect to $s$, we obtain that the stationary states satisfy
\[   \sigma \partial_s \rho(x,s) =   - \left(s - \Phi\left( W \ast \bar \rho (x) + B \right) \right) \rho(x,s),  \]
using the no-flux boundary condition. Here, $u\ast v : x\mapsto \int_{\mathbb T^d} u(x-y)v(y)\diff y$ denotes the convolution of $u$ and $v$ on $\mathbb T^d$. Thus, the stationary states must solve
\begin{equation}\label{eq:stationarystateequ}
\rho(x,s) = \dfrac{1}{Z_\rho} \exp\left( - \dfrac{ \big( s - \Phi\left(  W\ast \bar \rho (x) + B \big) \right)^2}{2\sigma} \right), 
\end{equation}
with, owing to \eqref{eqn:mass_x},
\[  Z_\rho = L^d \int_{0}^{+\infty} \exp\left( - \dfrac{ \big( s - \Phi\left(  W\ast \bar \rho (x) + B \big) \right)^2}{2\sigma} \right) \diff s. \]

For the sake of simplicity, let us denote
\[  \kappa = \dfrac{1}{\sigma}, \qquad \Phi_{\bar\rho}(x) = \Phi(W \ast \bar\rho(x) + B),\qquad  \Phi'_{\bar \rho}(x) = \Phi'(W \ast \bar\rho(x) + B).  \]

Then, any stationary state must be a zero of the functional
\begin{equation*}
	\mathcal G(\rho ,\kappa)  = \rho -\dfrac{1}{Z_\rho}\e^{ -\kappa \frac{ \left( s - \Phi_{\bar\rho} \right)^2}{2}} ,
\end{equation*}
with
\[   Z_\rho = L^d \int_{0}^{+\infty} \e^{ -\kappa \frac{ \left( s - \Phi_{\bar\rho} \right)^2}{2}} \diff s. \]
Consequently, the average $x\mapsto\bar \rho(x)$ is a zero of the functional
\begin{equation}\label{eq:onecompfunctional}
	\bar{\mathcal G}(\bar \rho ,\kappa)  = \bar\rho -\dfrac{1}{Z_\rho} \int_0^{+\infty} s \e^{ -\kappa \frac{ \left( s - \Phi_{\bar\rho} \right)^2}{2}}\diff s,
\end{equation}
with
	\[   Z_\rho =  L^d \int_{0}^{+\infty} \e^{ -\kappa \frac{ \left( s - \Phi_{\bar\rho} \right)^2}{2}} \diff s. \] 
A quick check will confirm that we can fully characterise the stationary states of \eqref{eqn:2} through considering the zeros of the functional for the averages, $\bar{\mathcal G}(\bar \rho ,\kappa)$.

Finally, the short calculation
	\begin{align*}  \rho(x,0) &= - \int_{0}^{+\infty} \dfrac{\partial \rho}{\partial s}(x,s) \diff s = \kappa \int_{0}^{+\infty} (s-\Phi_{\bar \rho}(x) )\rho(x,s) \diff s \\
	& = \kappa \Big( \underbrace{\int_{0}^{+\infty} s \rho(x,s)\diff s}_{ = \bar\rho(x)} - \Phi_{\bar\rho}(x) \underbrace{\int_{0}^{+\infty} \rho(x,s)\diff s }_{=\frac1{L^d}}  \Big),
	\end{align*}
yields the following relation between the value at $s=0$ of the stationary states and the mean values,
	\begin{equation}\label{eqn:rho_0}
	\rho(x,0) = \left(\bar \rho(x) - \dfrac{\Phi_{\bar\rho}(x)}{L^d}  \right)\kappa.
	\end{equation}
This relation will appear throughout the manuscript.

\subsection{Hilbert spaces and Fourier bases}\label{sec:basis}

Throughout this manuscript, we use either the space $L^2(\mathbb T^d)$ or the Hilbert space $L^2_S(\mathbb T^d)$ of coordinate-wise even $L^2$-functions on $\mathbb T^d := \sfrac{\R^d}{( L \Z)^d}, L>0$, that is to say
\[ L^2_S(\mathbb T^d) = \left\{ \ u\in L^2(\mathbb T^d) \ \big| \ \forall i\in\llbracket 1,d \rrbracket,\ u(x_1,\dots,-x_i,\dots,x_d) = u(x_1,\dots,x_i,\dots,x_d)\ \ \text{a.e. in } \mathbb T^d \ \right\}.  \]
Similarly, for any $m\in \N$, we define the coordinate-wise even Sobolev space $H^m_S(\mathbb T^d) = H^m(\mathbb T^d)\cap L^2_S(\mathbb T^d)$. Following the reasons stated in \cite[Section 2.2]{FSV2022}, we will consider a connectivity kernel $W\in H^m_S(\mathbb T^d)$, with $2m > d$. The advantage is twofold. First, the smoothing property of the convolution and Sobolev embeddings imply that $W\ast h$ will be continuous for all $h\in L^2(\mathbb T^d)$. Second, it avoids pathological connectivity kernels with singularities.

While working in the space $L^2_S(\mathbb T^d)$, we use the Hilbert basis
$
	\left(\omega_k\right)_{k\in \N^d}
$
defined by
\begin{equation}\label{eqn:HilbertElements}  \omega_k(x) = \dfrac{\Theta(k)}{L^{\frac d2}} \prod_{i=1}^{d} \omega_{k_i}(x_i),  
\end{equation}
with
\[
\omega_{k_i}(x) = \left\{\begin{array}{ll}
    \cos\left(\dfrac{2\pi k_i}{L} x_i\right) & \mathrm{if}\ k_i > 0, \\
    1 & \mathrm{if}\ k_i=0,
\end{array}\right.
\qquad \mathrm{and} \qquad \Theta(k) = \prod_{i=1}^{d} \sqrt{2 - \delta_{k_i,0}},
\]
where the Kronecker delta $\delta_{i,j}$ is 0 if $i\neq j$ and 1 if $i=j$. We assume that the connectivity function $W$ lies in $L^2_S(\mathbb T^d)$. The Fourier modes of $W$ are
\[\tilde  W(k) = \pscal{ W }{ \omega_k} = \int_{\mathbb T^d} W(x)\omega_k(x)\diff x , \quad k\in\N^d,  \]
Moreover, since $W\in L_S^2(\mathbb T^d)$, for all $g\in L_S^2(\mathbb T^d)$,
\begin{equation*}
W\ast g (x) = \int_{\mathbb T^d} W(x-y) g(y) \diff y = \dfrac{L^\frac d2}{\Theta(k)} \sum_{k\in \N^d} \tilde W(k) \tilde g(k) \omega_k(x). 
\end{equation*}
In particular, for all $k\in\N^d$,
\begin{equation}\label{trigo}
	W\ast \omega_k (x) =  \dfrac{L^{\frac d2}\tilde W(k)}{\Theta(k)} \omega_k(x) .
\end{equation}

When we work in the larger space $L^2(\mathbb T^d)$, we extend the above Hilbert basis to multi-indexes in $\Z^d$: we consider the base $	\left(\omega_k\right)_{k\in \Z^d} $ defined by \eqref{eqn:HilbertElements},
\[
\omega_{k_i}(x) = \left\{\begin{array}{ll}
    \cos\left(\dfrac{2\pi k_i}{L} x_i\right) & \mathrm{if}\ k_i > 0, \\
    1 & \mathrm{if}\ k_i=0,\\
    \sin\left(\dfrac{2\pi k_i}{L} x_i\right) & \mathrm{if}\ k_i < 0. \\
\end{array}\right.
\qquad \mathrm{and} \qquad \Theta(k) = \prod_{i=1}^{d} \sqrt{2 - \delta_{k_i,0}}.
\]
The formula for the convolution with basis vectors then becomes
\begin{equation}\label{trigo2}
	W\ast \omega_k (x) =  \dfrac{L^{\frac d2}\tilde W(|k|)}{\Theta(|k|)}  \omega_k(x), \quad |k| = (|k_1|,\dots,|k_d|) .
\end{equation}

\subsection{Assumptions and main theorem}

We assume the following in order to perform the bifurcation analysis.	
\begin{hyp}[On $\Phi$, $W$ and $B$]\label{as:1} There exists $m\in\N$, such that $2m > d$ and $W\in H^m_S(\mathbb T^d)$. The function $\Phi$ is continuous on $\R$, $C^3$ on $\R^*$, increasing on $\R_+$ and $\Phi\equiv 0$ on $\R_-$. Moreover, $B > 0$ and  $W_0 = \int_{\mathbb T^d} W(y)\diff dy < 0$.
\end{hyp}	
We remind the reader that the existence and uniqueness of spatially homogeneous stationary states of \eqref{eqn:2} was proven in \cite[Prop. 3.1]{CHS} and hold under (less restrictive assumptions than) Assumption \ref{as:1}.
	
Let $\rho_\infty^\kappa: s\mapsto \rho_\infty^\kappa(s)$ be the constant in space steady state associated with the parameter $\kappa>0$. Note that in particular $ \bar{\mathcal G}(\bar \rho_\infty^\kappa,\kappa)=0$. Whenever necessary, the dependence on $\kappa$ of $\rho_\infty$ will be highlighted with a superscript $\kappa$. Otherwise we will drop it. 
Denote
\[  
 \Phi_0^\kappa = \Phi(W_0 \bar\rho_\infty^\kappa + B),\qquad  (\Phi^\kappa_0)' = \Phi'(W_0 \bar\rho_\infty^\kappa + B), \qquad (\Phi^\kappa_0)'' = \Phi''(W_0 \bar\rho_\infty^\kappa + B).   
\]
Then, applying \eqref{eqn:rho_0} to the spatially homogeneous steady state $\rho=\rho_\infty^\kappa$, we get the relation
	\begin{equation}\label{eqn:rho_inf_0}  \rho_\infty^\kappa(0) = \left(\bar \rho_\infty^\kappa - \dfrac{ \Phi_0^\kappa}{L^d}  \right)\kappa.  
	\end{equation}
This relation is used throughout the manuscript.

We are now ready to state the main result of this work. The different parts of the proofs are found in Section \ref{sec:bifurcations}.

\begin{maintheorem}\label{thm:mainmain} Grant Assumption \ref{as:1}, and let $\kappa^*\in\left(\tfrac{2|W_0|^2}{L^{2d}\pi B^2},+\infty\right)$. Assume that there exists a unique $k^*\in\N^d$ such that
    \begin{equation}\label{eqn:W_tilde_equal_0}
        \dfrac{\tilde W (k^*)}{\Theta(k^*)} = \dfrac{1}{L^{\frac d2} (\Phi_0^{\kappa^*})' \left( \dfrac1{L^d} - L^d\bar \rho_\infty^{\kappa^*}  \left(\bar \rho_\infty^{\kappa^*} - \dfrac{ \Phi_0^{\kappa^*}}{L^d}\right)\kappa^* \right)}.
    \end{equation}
    Assume also that $ (\Phi_0^{\kappa^*})''\geq 0$. Then $(\bar\rho_\infty^{\kappa^*},\kappa^*)$ is a bifurcation point of $\bar{\mathcal G}(\bar\rho,\kappa)=0$. 
    
    Moreover, in a neighbourhood $U\times (\kappa^*-\varepsilon,\kappa^*+\varepsilon)$ of $(\bar\rho_\infty^{\kappa^*},\kappa^*)$ in $L^2_S(\mathbb T^d)\times \R_+^*$, the average of the stationary state is either of the form $(\bar \rho_\infty^\kappa,\kappa)$ or on the non-homogeneous solution curve
    \begin{equation*}
        \{\ (\bar \rho_{\kappa(z)},\kappa(z)) \ | \ z\in(-\delta,\delta), \ (\bar \rho_{\kappa(0)},\kappa(0))=(\bar \rho_\infty^{\kappa^*},\kappa^*),\ \delta>0 \ \},
    \end{equation*}
     with
    \begin{equation*}
        \bar \rho_{\kappa(z)}(x) = \bar \rho_\infty^{\kappa(z)} + z \omega_{k^*}(x) + o(z), \qquad x\in \mathbb T^d.
    \end{equation*}
\end{maintheorem}

The uniqueness condition on $k^*$ can be relaxed: if the bifurcation problem exhibits symmetries and $k^*$ is unique up to appropriate geometrical transformations, equivariant bifurcation theory provides the same result. We treat this general case in Section \ref{sec:high_dim}.

 More information about the shape of the branch can be found in Theorem \ref{thm:main_2} for the case of a unique $k^*$ and in Section \ref{sec:high_dim} for the case where it is unique up to symmetries. A relaxed convexity assumption on $\Phi$ can be found in Remark \ref{rem:convexity}.

The right hand side of \eqref{eqn:W_tilde_equal_0} is strictly positive, as stated in the following lemma.
\begin{lemma}\label{lm:g_eta}
We have the relation
\[ \dfrac1{L^d} - L^d\bar \rho_\infty  \left(\bar \rho_\infty - \dfrac{ \Phi_0^\kappa}{L^d}\right)\kappa = \frac{1}{L^d} g\left(\sqrt{\frac{\kappa}{2}} \Phi_0^\kappa\right), \]
where
    \[
g(\eta):= 1-\frac{2}{\sqrt{\pi}}\frac{\exp (-\eta^2)}{1+\erf (\eta)}\left[\frac{1}{\sqrt{\pi}}\frac{\exp (-\eta^2)}{1+\erf (\eta)}+\eta\right],
\]
with $\eta = \sqrt{\frac{\kappa}{2}} \Phi_0^\kappa$. Furthermore, the function $\eta\mapsto g(\eta)$ is increasing and for all $\eta \in \R_+$,
\[ 1-\frac{2}{\pi} \leqslant g(\eta) < 1. \]
\end{lemma}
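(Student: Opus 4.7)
The plan divides into two stages: first the stated algebraic identity, then the qualitative properties of the scalar function $g$.

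\textbf{Stage 1 (identity).} I would begin by computing $\bar\rho_\infty^\kappa$ in closed form. The homogeneous stationary state is a Gaussian with mean $\Phi_0$ and variance $1/\kappa$, truncated to $[0,+\infty)$ and renormalized to have total mass $1/L^d$. The change of variable $u = \sqrt{\kappa/2}\,(s-\Phi_0)$ expresses both $Z$ and the first moment as Gaussian integrals over $[-\eta,+\infty)$, leading to
\begin{equation*}
L^d \bar\rho_\infty^\kappa = \Phi_0 + \sqrt{\tfrac{2}{\kappa}}\,M(\eta), \qquad M(\eta) := \tfrac{1}{\sqrt{\pi}}\,\tfrac{e^{-\eta^2}}{1+\erf(\eta)}.
\end{equation*}
Plugging this into $\tfrac{1}{L^d} - \kappa L^d \bar\rho_\infty^\kappa\,(\bar\rho_\infty^\kappa - \tfrac{\Phi_0}{L^d})$ and using $\eta = \sqrt{\kappa/2}\,\Phi_0$ to absorb the remaining parameters produces $\tfrac{g(\eta)}{L^d}$ with $g(\eta) = 1 - 2M(\eta)\bigl(M(\eta)+\eta\bigr)$, which is exactly the announced formula.

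\textbf{Stage 2 (bounds).} The upper bound $g(\eta) < 1$ is immediate since $M(\eta) > 0$ and $\eta \geq 0$ force $2M(M+\eta) > 0$. Evaluating at $\eta = 0$ gives $M(0) = 1/\sqrt{\pi}$ and hence $g(0) = 1 - 2/\pi$; granted monotonicity of $g$ on $\R_+$, this also yields the lower bound.

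\textbf{Stage 3 (monotonicity, main obstacle).} I first derive from the definition of $M$ the Riccati-type identity $M'(\eta) = -2\,M(\eta)\,(M(\eta)+\eta)$; substituting it into the derivative of $g$ gives
\begin{equation*}
g'(\eta) = 2\,M(\eta)\,h(\eta), \qquad h(\eta) := 2\bigl(M(\eta)+\eta\bigr)^2 - g(\eta),
\end{equation*}
so the task reduces to showing $h \geq 0$ on $\R_+$. This is the main technical step of the proof. The plan is to treat $h$ as the solution of a first-order linear ODE: using $(M+\eta)' = 1 + M' = g$ together with $g' = 2Mh$, one finds
\begin{equation*}
h'(\eta) + 2\,M(\eta)\,h(\eta) = 4\bigl(M(\eta)+\eta\bigr)\,g(\eta).
\end{equation*}
The initial value $h(0) = 4/\pi - 1$ is strictly positive, and the right-hand side is nonnegative as soon as $g \geq 0$. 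This last positivity is the key input: I would obtain it by recognising $g(\eta) = \kappa\,\mathrm{Var}(s)$ when $s$ is distributed according to $L^d \rho_\infty^\kappa$ on $[0,+\infty)$. Concretely, multiplying the stationary equation $\partial_s \rho_\infty^\kappa = -\kappa(s-\Phi_0)\rho_\infty^\kappa$ by $s$ and integrating by parts yields $\int_0^{+\infty} s(s-\Phi_0)\rho_\infty^\kappa\,\diff s = 1/(\kappa L^d)$, which combined with the identity from Stage 1 identifies $g(\eta)$ with $\kappa\,\mathrm{Var}(s)\geq 0$. Applying the integrating factor $\exp\bigl(2\int_0^\eta M\bigr)$ to the ODE above then gives $h(\eta) \geq (4/\pi-1)\exp\bigl(-2\int_0^\eta M\bigr) > 0$ for all $\eta \geq 0$, hence $g'(\eta) \geq 0$. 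Combining with $g(0) = 1-2/\pi$ closes the proof.
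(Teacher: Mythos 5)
Your Stage 1 is essentially the paper's argument: both use $\bar\rho_\infty - \Phi_0/L^d = \tfrac{1}{L^d}\sqrt{2/\kappa}\,f(\eta)$ (the paper's \eqref{eqn:rho_bar_eta}) and the boundary value $\rho_\infty(0)$, and then substitute. Your Stage 3, however, follows a genuinely different path to monotonicity, and it is correct. Both you and the paper start from the Riccati identity $f'=-2f(f+\eta)$ to obtain $g' = 2f\big[2(f+\eta)(2f+\eta)-1\big]$, which you write as $2f\,h$ with $h:=2(f+\eta)^2-g$. The paper then defines its own $h_{\mathrm{paper}}:=(f+\eta)(2f+\eta)$ (so that your $h = 2h_{\mathrm{paper}}-1$), computes $h_{\mathrm{paper}}'=(4g-1)f+(3g-1)\eta$, and argues by contradiction from the first zero $\bar\eta$ of $g'$: on $[0,\bar\eta]$ one has $g\geq g(0)=1-2/\pi$, so $h_{\mathrm{paper}}$ is increasing there, hence $h_{\mathrm{paper}}(\bar\eta)>h_{\mathrm{paper}}(0)=2/\pi>1/2$, forcing $g'(\bar\eta)>0$, a contradiction. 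You instead derive the linear ODE $h'+2fh = 4(f+\eta)\,g$ directly from $(f+\eta)'=g$ and $g'=2fh$, feed in the \emph{a priori} sign $g\geq 0$ obtained from the probabilistic identity $g(\eta) = L^d\kappa\int_0^\infty(s-L^d\bar\rho_\infty)^2\rho_\infty\,\diff s$ (equivalently, $g(\eta)/2$ is the variance of a truncated Gaussian centred at $\eta$, so it is non-negative for every $\eta\geq 0$, not just those that arise self-consistently), and integrate via the integrating factor to get $h(\eta)\geq(4/\pi-1)\,e^{-2\int_0^\eta f}>0$. The two routes are equivalent in conclusion; yours avoids the contradiction argument at the cost of an extra observation (the variance interpretation), and as a bonus it gives a quantitative lower bound on $h$ and hence on $g'$. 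Incidentally, the paper later re-derives the variance identity in the proof of Lemma \ref{rem:linear}, so your observation connects these two results. One small point of exposition: you should make explicit that the non-negativity of $g$ holds for \emph{all} $\eta\geq 0$ (which your variance argument does give), since the integrating-factor step uses $g(\eta')\geq 0$ for every $\eta'\in[0,\eta]$; as phrased, tying $g\geq 0$ to the self-consistent stationary state risks leaving a gap, though the fix is trivial.
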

The function $\erf(z)$ is the standard error function, $\erf(z):=\tfrac{2}{\sqrt\pi}\int_{0}^{z} \e^{-s^2}\diff s$, and the proof of the lemma can be found in the appendix. As the right-hand side of \eqref{eqn:W_tilde_equal_0} is positive, there exists no bifurcations of the form described in the main theorem if all the Fourier modes of $W$ are negative. A similar conclusion was drawn in \cite{CGPS20} for the McKean--Vlasov equation, however with opposite sign. They introduce the notion of H-stability: a function $W\in L^2(\mathbb T^d)$ is said to be H-stable if all its Fourier modes are non-negative.
The authors of \cite{CGPS20} prove, under mild assumptions, that for H-stable potentials there exists a unique stationary state which is globally asymptotically stable. In the setting of this manuscript, if $-W$ is H-stable, the hypotheses of Theorem \ref{thm:mainmain} cannot be satisfied.  Proving that the constant in $x$ stationary state $\rho_\infty$ is asymptotically stable for the time-dependent system \eqref{eqn:2} when $-W$ is H-stable is an open question and the energy method utilised in \cite{CGPS20} is not applicable here.
However, there exists a result on the linear stability of \eqref{eqn:2}.

\subsection{Linear stability}\label{sec:linearstability}
In \cite[Theorem 3.6]{CHS} it is shown that the homogeneous in space stationary state $\rho_\infty^\kappa$ is linearly asymptotically stable in $L^2\big({\mathbb T}^d  \times \R_+\big)$ as long as 
\begin{align} \label{eq:linearstabilitycond1}
\dfrac{\tilde W (k)}{\Theta(k)} < \frac{1}{\kappa  (\Phi_0^\kappa)' L^{\frac d2} \int_0^\infty (s-L^d\bar{\rho}_\infty^\kappa)^2 \rho_\infty^\kappa \diff s}    
\end{align}
for all $k \in \mathbb{N}$. In the following lemma, we show that when replacing the inequality with an equality in \eqref{eq:linearstabilitycond1}, yielding the value of $\kappa$ where $\rho_\infty$ is no longer linearly asymptotically stable, \eqref{eq:linearstabilitycond1} is identical to the bifurcation condition \eqref{eqn:W_tilde_equal_0}. For many problems, linear stability of a stationary state $\bar\rho_\infty$ is equivalent to the spectrum of $D_{\bar\rho} \bar {\mathcal G} (\bar\rho_\infty,\kappa)$ being located in the left complex half-space (see the discussion in \cite[Sec. I.7]{Kielhofer2012}). For some equations, for example semilinear equations, it is even possible to get nonlinear stability out of the study of the spectrum of  $D_{\bar\rho} \bar {\mathcal G} (\bar\rho_\infty,\kappa)$. However, as \eqref{eqn:2} is both nonlinear and non-local, we cannot apply this general theory. The rigorous connection between the eigenvalues and linear stability for \eqref{eqn:2} is provided in the following lemma. 

\begin{lemma}\label{rem:linear} The linear asymptotic stability of $\rho_\infty$ in $L^2\big(\mathbb T^d  \times \R_+\big)$ is lost at the smallest $\kappa$ for which there exists a $k\in\N^d$ such that $\tfrac{\tilde W (k)}{\Theta(k)}$ satisfies \eqref{eqn:W_tilde_equal_0}.
\end{lemma}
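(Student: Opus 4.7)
The plan is to show that the right-hand sides of the linear stability condition \eqref{eq:linearstabilitycond1} and the bifurcation condition \eqref{eqn:W_tilde_equal_0} are in fact identical. Once this is established, the smallest $\kappa$ at which some mode $\tilde W(k)/\Theta(k)$ satisfies \eqref{eqn:W_tilde_equal_0} coincides with the smallest $\kappa$ at which the strict linear stability inequality \eqref{eq:linearstabilitycond1} first fails for some $k$, which is exactly the statement of the lemma. The common prefactor $L^{d/2}\Phi_0'$ in both denominators can be cancelled, so the task reduces to verifying the identity
\[
\kappa \int_0^\infty (s - L^d \bar\rho_\infty^\kappa)^2 \rho_\infty^\kappa(s)\,\diff s \;=\; \frac{1}{L^d} - L^d \bar\rho_\infty^\kappa \left( \bar\rho_\infty^\kappa - \frac{\Phi_0}{L^d}\right)\kappa.
\]

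First I would expand the square on the left and use the mass constraint $\int_0^\infty \rho_\infty^\kappa\,\diff s = 1/L^d$ together with the definition $\int_0^\infty s\rho_\infty^\kappa\,\diff s = \bar\rho_\infty^\kappa$ to rewrite it as
\[
\kappa \int_0^\infty s^2 \rho_\infty^\kappa\,\diff s \;-\; \kappa L^d (\bar\rho_\infty^\kappa)^2.
\]
Next I would compute the remaining second moment by integration by parts in $s$. Since $\rho_\infty^\kappa$ is a spatially homogeneous stationary state, it satisfies $\partial_s \rho_\infty^\kappa = -\kappa(s-\Phi_0)\rho_\infty^\kappa$ together with the no-flux boundary condition at $s=0$. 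Multiplying by $s$ and integrating yields
\[
\int_0^\infty s\,\kappa(s-\Phi_0)\rho_\infty^\kappa\,\diff s \;=\; -\int_0^\infty s\,\partial_s \rho_\infty^\kappa\,\diff s \;=\; \int_0^\infty \rho_\infty^\kappa\,\diff s \;=\; \frac{1}{L^d},
\]
so that $\kappa \int_0^\infty s^2 \rho_\infty^\kappa\,\diff s = \tfrac{1}{L^d} + \kappa\Phi_0\bar\rho_\infty^\kappa$. Plugging this back gives precisely the right-hand side of \eqref{eqn:W_tilde_equal_0}, completing the identity.

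To conclude, since the two right-hand sides are equal and, by Lemma \ref{lm:g_eta}, strictly positive for all $\kappa > 0$, the set of $\kappa$ for which \eqref{eq:linearstabilitycond1} holds with equality for some $k\in\N^d$ coincides with the set of $\kappa$ satisfying \eqref{eqn:W_tilde_equal_0} for some $k$; taking the infimum on both sides gives the claim. No significant obstacle is anticipated: the whole argument is an algebraic identity for the variance of the Gaussian-like density $\rho_\infty^\kappa$ on the half-line, obtained from the stationary equation via one integration by parts using the no-flux condition at $s=0$.
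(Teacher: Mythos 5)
Your proof is correct and reaches the same algebraic identity as the paper, namely that the parenthesised quantity $\frac{1}{L^d} - L^d \bar\rho_\infty^\kappa(\bar\rho_\infty^\kappa - \Phi_0/L^d)\kappa$ equals $\kappa\int_0^\infty (s-L^d\bar\rho_\infty^\kappa)^2\rho_\infty^\kappa\,\diff s$. The route, however, is genuinely different: the paper substitutes the explicit Gaussian form of $\rho_\infty^\kappa$ and evaluates the integral via the error function, further invoking the relation $\rho_\infty^\kappa(0)=\kappa(\bar\rho_\infty^\kappa-\Phi_0/L^d)$ and the expression for $Z$. You instead avoid the explicit form entirely: you use the stationary first-order ODE $\partial_s\rho_\infty^\kappa = -\kappa(s-\Phi_0)\rho_\infty^\kappa$, multiply by $s$, and integrate by parts, which directly produces the second moment from the mass and mean. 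This is cleaner — it does not require any error-function manipulations and is the natural ``moment identity'' argument — though it implicitly relies on the boundary term $s\rho_\infty^\kappa(s)$ vanishing at both endpoints, which is immediate from $\rho_\infty^\kappa(0)$ being finite and the Gaussian decay at infinity, but could be stated explicitly for completeness. The paper's version is more computational but self-contained given that the closed-form Gaussian expression has already been derived in \eqref{eq:stationarystateequ}.
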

\begin{proof}
The result follows by checking that the right hand side of \eqref{eq:linearstabilitycond1} is identical the right hand side of \eqref{eqn:W_tilde_equal_0}. Remember that $\int_0^\infty \rho_\infty \diff s = \frac{1}{L^d}$. Then, the denominator of the right hand side in \eqref{eq:linearstabilitycond1} is
\begin{align*}
 L^{\frac d2} \kappa \int_0^\infty (s-L^d\bar{\rho}_\infty)^2 \rho_\infty \diff s  & = L^{\frac d2} \kappa \left( \int_0^\infty s^2 \rho_\infty \diff s - 2L^d \bar{\rho}_\infty \int_0^\infty s \rho_\infty \diff s + L^{2d}\bar{\rho}_\infty ^2 \int_0^\infty \rho_\infty \diff s\right) \\ 
 & =L^{\frac d2} \left(\kappa \int_0^\infty s^2 \rho_\infty \diff s - L^d \kappa \bar{\rho}_\infty^2 \right) \\
 & = L^{\frac d2}\left( 2 \int_0^\infty \tfrac{\kappa}{2}(s- \Phi_0^\kappa)^2 \rho_\infty \diff s + \kappa 2 \Phi_0^\kappa\bar{\rho}_\infty -\frac{\kappa}{L^d} (\Phi_0^\kappa)^2 - L^d\kappa \bar{\rho}_\infty^2\right).
\end{align*}
Remembering the expression for $\rho_\infty$ in \eqref{eq:stationarystateequ} and for $\rho_\infty(0)$ in \eqref{eqn:rho_inf_0}, we compute the integral as follows: 
\begin{align*}
 2 \int_0^\infty \tfrac{\kappa}{2}(s- \Phi_0^\kappa)^2 \rho_\infty \diff s & = \sqrt{\frac{2}{\kappa}} \frac{1}{2} \left(\frac{\sqrt{\pi}}{Z_\rho} \left(1 + \erf \left(\tfrac{\sqrt{\kappa} \Phi_0^\kappa}{\sqrt{2}} \right) \right) - 2\sqrt{\frac{\kappa}{2}} \Phi_0^\kappa \rho_\infty (0) \right) \\
 & =  \sqrt{\frac{2}{\kappa}} \frac{1}{2}\left(\frac{\sqrt{2\kappa}}{L^d} - 2\sqrt{\frac{\kappa}{2}} \Phi_0^\kappa \kappa \left(\bar{\rho}_\infty - \frac{ \Phi_0^\kappa}{L^d} \right) \right) \\
 & = \frac{1}{L^d} -  \Phi_0^\kappa \kappa \left(\bar{\rho}_\infty - \frac{ \Phi_0^\kappa}{L^d} \right).
\end{align*}
Thus, 
\begin{align*}
 L^{\frac d2} \kappa \int_0^\infty (s-\bar{\rho}_\infty)^2 \rho_\infty \diff s 
 & = L^{\frac d2}\left(\frac{1}{L^d} -  \Phi_0^\kappa \kappa \left(\bar{\rho}_\infty - \frac{ \Phi_0^\kappa}{L^d} \right)  + \kappa 2 \Phi_0^\kappa\bar{\rho}_\infty -\frac{\kappa}{L^d} (\Phi_0^\kappa)^2 -L^d\kappa \bar{\rho}_\infty^2\right) \\
 & = L^{\frac d2}\left(\frac{1}{L^d} - L^d\bar{\rho}_\infty \left(\bar{\rho}_\infty-\frac{ \Phi_0^\kappa}{L^d}\right) \kappa \right),
\end{align*}
which, when multiplied by $ (\Phi_0^\kappa)'$ is identical to the denominator of the right hand side of \eqref{eqn:W_tilde_equal_0}.
\end{proof}

Note that under Assumption \ref{as:1}, if $-W$ is H-stable, $\rho_\infty$ is a linearly stable stationary state of \eqref{eqn:2} for any $\kappa>0$.

\section{Bifurcation analysis}\label{sec:bifurcations}
This section is devoted to proving the main Theorem \ref{thm:mainmain}. We start by assuming that the connectivity $W$ is coordinate-wise even on the square, and look for coordinate-wise even, non-homogeneous in space stationary states following \cite{CGPS20}. In Section \ref{sec:high_dim} we consider possible additional symmetries of $W$. But first we provide results concerning the asymptotic behaviour of the homogeneous in space stationary state.

\subsection{Asymptotic behaviour of the homogeneous in space stationary state}

\begin{lemma}\label{lm:kappa_small}
	Grant Assumption \ref{as:1}. Then, we have
	 \[\forall\kappa\in\left(0,\dfrac{2|W_0|^2}{L^{2d}\pi B^2}\right], \qquad \bar{\rho}_\infty^\kappa = \frac1{L^d}\sqrt{\dfrac{2}{\kappa\pi}}, \qquad \bar{\rho}_\infty^\kappa \geqslant \dfrac{B}{|W_0|}, \qquad  \Phi_0^\kappa =  (\Phi_0^{\kappa})' = 0. \]
	and
	 \[ \forall\kappa\in\left(\dfrac{2|W_0|^2}{L^{2d}\pi B^2},+\infty\right), \qquad \bar{\rho}_\infty^\kappa < \dfrac{B}{|W_0|}. \]
\end{lemma}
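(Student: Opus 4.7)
The plan is to exploit the explicit formula
\[
\bar\rho_\infty^\kappa \;=\; \dfrac{\sqrt 2\,\e^{-\frac\kappa 2 \Phi_0^2}}{L^d \sqrt{\pi\kappa}\,\bigl(1 + \erf(\Phi_0\sqrt\kappa/\sqrt 2)\bigr)} \;+\; \dfrac{\Phi_0}{L^d}
\]
already derived within the proof of the preceding lemma (see \eqref{eqn:rho_bar_eta}), combined with the dichotomy enforced by Assumption \ref{as:1}: since $\Phi \equiv 0$ on $\R_-$, $B>0$ and $W_0<0$, one has either $\bar\rho_\infty^\kappa \geqslant B/|W_0|$, forcing $W_0\bar\rho_\infty^\kappa + B \leqslant 0$ and hence $\Phi_0 = 0$ (and likewise $\Phi_0' = 0$), or $\bar\rho_\infty^\kappa < B/|W_0|$, forcing $W_0\bar\rho_\infty^\kappa + B > 0$ and hence $\Phi_0>0$.

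First I would handle the regime in which $\Phi_0 = 0$. Plugging into the explicit formula collapses it instantly to $\bar\rho_\infty^\kappa = \tfrac{1}{L^d}\sqrt{2/(\kappa\pi)}$. For this to be self-consistent with $\Phi_0 = 0$ via the dichotomy, we need $\bar\rho_\infty^\kappa \geqslant B/|W_0|$, and solving the inequality $\tfrac{1}{L^d}\sqrt{2/(\kappa\pi)} \geqslant B/|W_0|$ for $\kappa$ yields precisely the threshold $\kappa \leqslant \tfrac{2|W_0|^2}{L^{2d}\pi B^2}$. On this range the pair $(\bar\rho_\infty^\kappa,\Phi_0) = (\tfrac{1}{L^d}\sqrt{2/(\kappa\pi)},0)$ is therefore a genuine zero of $\bar{\mathcal G}(\cdot,\kappa)$ in the homogeneous class, and the uniqueness result of \cite{CHS} excludes any competing solution in the other branch $\Phi_0>0$.

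For the complementary regime $\kappa > \tfrac{2|W_0|^2}{L^{2d}\pi B^2}$, I would proceed by contradiction. If $\bar\rho_\infty^\kappa \geqslant B/|W_0|$, the dichotomy again gives $\Phi_0 = 0$, so the explicit formula forces $\bar\rho_\infty^\kappa = \tfrac{1}{L^d}\sqrt{2/(\kappa\pi)}$. But then the inequality $\tfrac{1}{L^d}\sqrt{2/(\kappa\pi)} \geqslant B/|W_0|$ requires $\kappa \leqslant \tfrac{2|W_0|^2}{L^{2d}\pi B^2}$, contradicting the hypothesis. Hence $\bar\rho_\infty^\kappa < B/|W_0|$.

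I do not anticipate a serious obstacle: the whole argument is essentially algebraic manipulation of the closed-form expression \eqref{eqn:rho_bar_eta}. The only mildly delicate point is the boundary value $\kappa = \tfrac{2|W_0|^2}{L^{2d}\pi B^2}$, where $W_0\bar\rho_\infty^\kappa + B = 0$ exactly; there $\Phi_0 = 0$ follows by continuity of $\Phi$ at $0$ (Assumption \ref{as:1}), and $\Phi_0'$ is read off as the left-limit of $\Phi'$ at $0$, which vanishes since $\Phi\equiv 0$ on $\R_-$.
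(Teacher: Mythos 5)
Your proposal is correct and follows essentially the same route as the paper's own proof: both exploit the uniqueness of the spatially homogeneous stationary state from \cite{CHS}, verify that the pair $(\bar\rho_\infty^\kappa,\Phi_0)=\bigl(\tfrac{1}{L^d}\sqrt{2/(\kappa\pi)},0\bigr)$ is a self-consistent solution precisely when $\kappa\leqslant 2|W_0|^2/(L^{2d}\pi B^2)$, and rule out the alternative regime on $(\kappa_c,+\infty)$ by contradiction. The only cosmetic difference is that you invoke the closed-form expression \eqref{eqn:rho_bar_eta} while the paper re-derives $\bar\rho_\infty^\kappa$ directly from the integral formula under the ansatz $\Phi_0=0$.
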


\begin{proof}
	For a given value $\kappa$, by Proposition 3.1 of \cite{CHS}, we know that there exists a unique stationary state which is constant in space and thus a unique associated mean value $\bar\rho_\infty^\kappa$.
	If
	\begin{equation}\label{condkappa} \kappa \leqslant \dfrac{2|W_0|^2}{L^{2d}\pi B^2},      \end{equation}
	then we have
	\[   \frac1{L^d}\sqrt{\dfrac{2}{\kappa\pi}}  \geqslant \dfrac{B}{|W_0|}    \qquad \mathrm{and} \qquad  \Phi\left(W_0 \frac1{L^d}\sqrt{\dfrac{2}{\kappa\pi}}  + B \right) = 0. \]
	Then, if $ \Phi_0^\kappa=0$,
	\[  \bar \rho_\infty^\kappa = \dfrac{\displaystyle \int_0^{+\infty} s \e^{-\frac\kappa2 s^2}\diff s}{L^d\sqrt{\frac{\pi}{2\kappa}}(1+\mathrm{erf}(0))} = \dfrac{1}{L^d\kappa} \sqrt{\dfrac{2\kappa}{\pi}} = \frac1{L^d}\sqrt{\dfrac{2}{\kappa \pi}}.    \]
	Hence, if \eqref{condkappa} holds, then the unique constant in space smooth stationary state satisfies
	\[  \bar \rho_\infty^\kappa = \frac1{L^d}\sqrt{\dfrac{2}{\kappa\pi}}.  \]

	Now, let $\kappa\in \left( \tfrac{2|W_0|^2}{L^{2d}\pi B^2},+\infty\right)$. Assume that 
	\[ \bar{\rho}_\infty^\kappa \geqslant \dfrac{B}{|W_0|}.  \]
	Then $ \Phi_0^\kappa=0$ and we have
	\[   \bar \rho_\infty^\kappa =\frac1{L^d} \sqrt{\dfrac{2}{\kappa \pi}} = \frac1{L^d}\sqrt{\dfrac2\pi} \sqrt{\dfrac{1}{\kappa}} <\frac1{L^d} \sqrt{\dfrac2\pi} \sqrt{\dfrac{L^{2d}\pi B^2}{2 |W_0|^2}} = \dfrac{B}{|W_0|}, \]
	which is a contradiction.
\end{proof}

\begin{lemma}\label{lm:der_bar_rho}
    Grant Assumptions \ref{as:1}. Then the function $\kappa\mapsto \bar\rho_\infty$ is differentiable and for all $\kappa\in(0,+\infty)$,
    \[ \dfrac{\diff \bar\rho_\infty}{\diff\kappa}(\kappa)  = -  \dfrac{1}{2\kappa } \dfrac{(1 + L^d \Phi_0^\kappa \bar\rho_\infty\kappa)\left(\bar \rho_\infty - \dfrac{ \Phi_0^\kappa}{L^d}\right)}{\left(1 -  (\Phi_0^\kappa)' \left(\dfrac1{L^d} - L^d\bar\rho_\infty\left(\bar\rho_\infty-\dfrac{ \Phi_0^\kappa}{L^d}\right)\kappa\right)W_0\right)}.    \]
\end{lemma}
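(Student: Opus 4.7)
The plan is to apply the implicit function theorem to the scalar equation characterizing $\bar\rho_\infty^\kappa$. Starting from \eqref{eqn:rho_bar_eta} and the definition of $f(\eta) = \tfrac{1}{\sqrt\pi}\e^{-\eta^2}/(1+\erf(\eta))$ introduced in the proof of Lemma \ref{lm:g_eta}, the stationary relation can be written as $F(\bar\rho_\infty^\kappa,\kappa) = 0$ with
\[ F(u,\kappa) = u - \frac{\Phi(W_0 u + B)}{L^d} - \frac{1}{L^d}\sqrt{\frac{2}{\kappa}}\, f\bigl(\Phi(W_0 u + B)\sqrt{\kappa/2}\bigr). \]
Under Assumption \ref{as:1}, $\Phi$ is $C^1$ on $\R^*$, and Lemma \ref{lm:kappa_small} guarantees that $W_0\bar\rho_\infty^\kappa + B \neq 0$ except at the threshold $\kappa_c = 2|W_0|^2/(L^{2d}\pi B^2)$. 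On the two open intervals $(0,\kappa_c)$ and $(\kappa_c,+\infty)$ the map $F$ is $C^1$ in both variables, so the implicit function theorem will yield the differentiability of $\kappa\mapsto\bar\rho_\infty^\kappa$ together with the desired formula. The regime $(0,\kappa_c)$ can moreover be treated directly from the explicit expression $\bar\rho_\infty^\kappa = (1/L^d)\sqrt{2/(\pi\kappa)}$ of Lemma \ref{lm:kappa_small}, and the formula at $\kappa_c$ is then recovered by continuity.

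The computation of the two partial derivatives relies on two identities already used in Lemma \ref{lm:g_eta}. The first is the differential property $f'(\eta) = -2 f(\eta)(f(\eta)+\eta)$, which combined with the calculation $g(\eta)/L^d = 1/L^d - L^d\bar\rho_\infty^\kappa(\bar\rho_\infty^\kappa-\Phi_0/L^d)\kappa$ extracted from the proof of Lemma \ref{lm:g_eta} directly gives
\[ \partial_u F = 1 - \frac{\Phi_0' W_0}{L^d}\bigl(1 + f'(\eta)\bigr) = 1 - \Phi_0' W_0 \Bigl(\tfrac{1}{L^d} - L^d\bar\rho_\infty^\kappa\bigl(\bar\rho_\infty^\kappa - \tfrac{\Phi_0}{L^d}\bigr)\kappa\Bigr). \]
The second identity is the algebraic heart of the computation: rewriting the defining relation $\bar\rho_\infty^\kappa - \Phi_0/L^d = (1/L^d)\sqrt{2/\kappa}\,f(\eta)$ as $f(\eta) = L^d(\bar\rho_\infty^\kappa - \Phi_0/L^d)\sqrt{\kappa/2}$ and adding $\eta = \Phi_0\sqrt{\kappa/2}$ yields the clean cancellation
\[ f(\eta) + \eta = L^d\bar\rho_\infty^\kappa\sqrt{\kappa/2}. \]
Using this to substitute $f'(\eta) = -2 f(\eta)L^d \bar\rho_\infty^\kappa \sqrt{\kappa/2}$, one differentiates $\sqrt{2/\kappa}\, f(\eta)$ with respect to $\kappa$ at fixed $u$ and factors out $(\bar\rho_\infty^\kappa - \Phi_0/L^d)/(2\kappa)$ to obtain
\[ \partial_\kappa F = \frac{1}{2\kappa}\,\bigl(\bar\rho_\infty^\kappa - \tfrac{\Phi_0}{L^d}\bigr)\bigl(1 + L^d\Phi_0\bar\rho_\infty^\kappa\kappa\bigr). \]

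It then remains to conclude via $d\bar\rho_\infty^\kappa/d\kappa = -\partial_\kappa F/\partial_u F$, which gives the announced formula after simplification. The well-posedness of this quotient follows from $\partial_u F \geq 1$: indeed Assumption \ref{as:1} gives $W_0 < 0$ and $\Phi_0' \geq 0$, while Lemma \ref{lm:g_eta} ensures $g(\eta)/L^d > 0$, so $-W_0\Phi_0'\,g(\eta)/L^d \geq 0$. Continuity across $\kappa_c$ is verified by checking that, when $\Phi_0 = \Phi_0' = 0$, the announced formula reduces to $-\bar\rho_\infty^\kappa/(2\kappa)$, which agrees with the direct derivative of the explicit expression of Lemma \ref{lm:kappa_small}. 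The main obstacle in executing this plan is the algebraic simplification producing the compact numerator: it only becomes tractable after invoking the identity $f(\eta) + \eta = L^d\bar\rho_\infty^\kappa\sqrt{\kappa/2}$ coming from the stationary condition itself, which converts an awkward combination of $f$, $\eta$ and $\Phi_0$ into the single factor $(\bar\rho_\infty^\kappa - \Phi_0/L^d)$ seen in the final expression.
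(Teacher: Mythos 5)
Your proposal is correct and takes essentially the same route as the paper: you apply the implicit function theorem to the scalar relation $\bar\rho_\infty-\Phi_0/L^d=(1/L^d)\sqrt{2/\kappa}\,f(\eta)$, which is just the closed form of the paper's functional $\tilde G(y,\kappa)$, and the two partial derivatives you compute coincide with those stated in the paper. The one place where you add value is the careful handling of the regularity issue at $\kappa_c$ (where $W_0\bar\rho_\infty+B=0$ and $\Phi$ may fail to be $C^1$), which the paper glosses over; your explicit verification that the limiting formula matches the direct derivative of $\bar\rho_\infty^\kappa=(1/L^d)\sqrt{2/(\pi\kappa)}$ closes that gap.
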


\begin{proof}
    The mean $\bar\rho_\infty$ of the constant stationary state is a solution to
    \begin{equation*}
        \tilde G(\bar\rho_\infty,\kappa) = 0,
    \end{equation*}
    where for all $y\in \R_+^*$
    \[  \tilde G(y,\kappa) = y -\dfrac{1}{Z_\rho} \int_0^{+\infty} s \e^{ -\kappa \frac{ \left( s - \Phi(W_0 y + B) \right)^2}{2}}\diff s, \qquad Z_\rho =  L^d \int_{0}^{+\infty} \e^{ -\kappa \frac{ \left( s - \Phi(W_0 y + B) \right)^2}{2}} \diff s.  \]
    We have
    \begin{equation*}
        \dfrac{\partial \tilde G}{\partial y}(\bar\rho_\infty,\kappa) = \left(1 -  (\Phi_0^\kappa)' \left(\dfrac1{L^d} - L^d\bar\rho_\infty\left(\bar\rho_\infty-\dfrac{ \Phi_0^\kappa}{L^d}\right)\kappa\right)W_0\right),
    \end{equation*}
    and
    \begin{equation*}
         \dfrac{\partial \tilde G}{\partial \kappa}(\bar\rho_\infty,\kappa) =  \dfrac{\partial \bar{\mathcal G}}{\partial \kappa}(\bar\rho_\infty,\kappa) = \dfrac{1}{2\kappa } (1 + L^d \Phi_0^\kappa \bar\rho_\infty\kappa)\left(\bar \rho_\infty - \dfrac{ \Phi_0^\kappa}{L^d}\right).
    \end{equation*}
    By the implicit function theorem, if
    \[ \dfrac{\partial \tilde G}{\partial y}(\bar\rho_\infty,\kappa) \neq 0, \]
    then
    \begin{equation*}
        \dfrac{\diff \bar\rho_\infty}{\diff\kappa}(\kappa) = -\dfrac{\dfrac{\partial \tilde G}{\partial \kappa}(\bar\rho_\infty,\kappa)}{\dfrac{\partial \tilde G}{\partial y}(\bar\rho_\infty,\kappa)}.
    \end{equation*}
    But by Assumption \ref{as:1}, $W_0<0$ and by Lemma \ref{lm:g_eta},
    \[ \dfrac1{L^d} - L^d\bar\rho_\infty\left(\bar\rho_\infty-\dfrac{ \Phi_0^\kappa}{L^d}\right)\kappa > 0,  \]
    so $\dfrac{\partial \tilde G}{\partial y}(\bar\rho_\infty,\kappa) > 0$.
    Hence the result.
\end{proof}

\begin{lemma}\label{lm:inf_bound_rho}
	Grant Assumption \ref{as:1}. Then, there exists a value $\rho^*\in\R_+^*$ such that for all $\kappa\in\R_+^*$, $\bar\rho_\infty^\kappa \geqslant \rho^*$ and $\lim_{\kappa\to+\infty} \bar\rho_\infty^\kappa = \rho^*$.
\end{lemma}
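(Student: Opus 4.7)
The plan is to show that $\kappa\mapsto\bar\rho_\infty^\kappa$ is strictly decreasing on $\R_+^*$ and then identify the limit at $+\infty$, which will play the role of $\rho^*$.

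\textbf{Step 1 (monotonicity).} Set $\kappa_0 := \tfrac{2|W_0|^2}{L^{2d}\pi B^2}$. On $(0,\kappa_0]$, Lemma \ref{lm:kappa_small} gives the explicit formula $\bar\rho_\infty^\kappa = \tfrac{1}{L^d}\sqrt{\tfrac{2}{\kappa\pi}}$, which is obviously strictly decreasing. On $(\kappa_0,+\infty)$ I would apply Lemma \ref{lm:der_bar_rho} and check the sign of the derivative factor by factor: the factor $1 + L^d\Phi_0 \bar\rho_\infty^\kappa \kappa$ is at least $1$; the factor $\bar\rho_\infty^\kappa - \Phi_0/L^d$ equals $\rho_\infty^\kappa(0)/\kappa > 0$ by \eqref{eqn:rho_inf_0} and the strict positivity of the Gaussian density; the denominator is bounded below by $1$ since $W_0<0$, $\Phi_0'\geq 0$, and the inner parenthesis is positive by Lemma \ref{lm:g_eta}. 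Hence $\tfrac{\diff \bar\rho_\infty}{\diff\kappa}<0$ on $(\kappa_0,+\infty)$, and continuity of $\bar\rho_\infty^\kappa$ at $\kappa_0$ extends strict monotonicity to all of $\R_+^*$.

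\textbf{Step 2 (existence of a nonnegative limit).} Being strictly decreasing and bounded below by $0$, the function $\kappa\mapsto\bar\rho_\infty^\kappa$ admits a limit $\rho^* := \lim_{\kappa\to+\infty}\bar\rho_\infty^\kappa \geq 0$, and the monotonicity of the map immediately gives $\bar\rho_\infty^\kappa \geq \rho^*$ for every $\kappa\in\R_+^*$. This already proves the lower bound and the limit claim, modulo showing that $\rho^*$ is actually strictly positive.

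\textbf{Step 3 (strict positivity of $\rho^*$; the crux).} I would argue by contradiction. If $\rho^*=0$, then $\bar\rho_\infty^\kappa\to 0$, so $W_0\bar\rho_\infty^\kappa + B \to B > 0$ by Assumption \ref{as:1}, and continuity of $\Phi$ together with $\Phi(0)=0$ and strict monotonicity on $\R_+$ imply $\Phi_0^\kappa \to \Phi(B) > 0$. On the other hand, the identity \eqref{eqn:rho_inf_0} combined with the strict positivity of the Gaussian value $\rho_\infty^\kappa(0)$ yields the pointwise lower bound $\bar\rho_\infty^\kappa > \Phi_0^\kappa/L^d$; passing to the limit gives $0 = \rho^* \geq \Phi(B)/L^d > 0$, a contradiction.

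The main obstacle is precisely this last step: one has to find a lower bound on $\bar\rho_\infty^\kappa$ that survives the limit, despite the fact that $\Phi_0^\kappa$ depends self-consistently on $\bar\rho_\infty^\kappa$ and could a priori collapse with it. The role of the assumption $B>0$ is exactly to keep $\Phi_0^\kappa$ away from zero in the limit and to pin $\rho^*$ strictly above zero.
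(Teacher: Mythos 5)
Your proof is correct and follows essentially the same strategy as the paper's: monotonicity from Lemma \ref{lm:der_bar_rho}, existence of a nonnegative limit, and a contradiction built from the identity \eqref{eqn:rho_inf_0} together with the nonnegativity of $\rho_\infty^\kappa(0)$ and $\Phi_0^\kappa\to\Phi(B)>0$. The paper phrases the contradiction as ``for $n$ large enough, $\rho_\infty^{\kappa_n}(0)<0$,'' while you restate the same inequality as $\bar\rho_\infty^\kappa>\Phi_0^\kappa/L^d$ and pass to the limit; these are the same argument, and your additional factor-by-factor sign check of $\tfrac{\diff\bar\rho_\infty}{\diff\kappa}$ simply unpacks what the paper cites as a given.
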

 
\begin{proof}
	Assume there exists a sequence $\kappa_n$ such that
	\[ \lim_{n\to+\infty} \bar\rho_\infty^{\kappa_n} = 0.  \]
	Then we have
	\[ \lim_{n\to+\infty}  \Phi_0^{\kappa_n} = \Phi(B) > 0.  \]
	But recall that
	\[   \rho_\infty^{\kappa_n}(0) = \left(\bar\rho_\infty^{\kappa_n}-\dfrac{ \Phi_0^{\kappa_n}}{L^d}\right)\kappa_n.  \]
	For $n$ large enough, $ \rho_\infty^{\kappa_n}(0) < 0 $ which is impossible. Moreover, by Lemma \ref{lm:der_bar_rho} the function $\kappa\mapsto \bar\rho_\infty^\kappa$ is decreasing.
	Hence the result.
\end{proof}
 
\begin{lemma}\label{lm:equiv}
	Grant Assumption \ref{as:1}. Then, the quantity $\bar \rho_\infty - \dfrac{ \Phi_0^\kappa}{L^d}$
	converges exponentially fast towards 0 when $\kappa$ tends to $+\infty$. Moreover, there exists $\Phi_* > 0$ such that $\lim_{\kappa\to+\infty} \Phi_0^\kappa(\kappa) =  \Phi_*$ and \[ \bar \rho_\infty - \dfrac{ \Phi_0^\kappa}{L^d}\,\underset{\kappa\to+\infty}{\sim}\,
	\dfrac{\e^{-\frac{\kappa}2\Phi_*^2}}{L^d\sqrt{2\pi\kappa}}. \]
\end{lemma}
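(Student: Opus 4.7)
The plan is to build everything on the explicit formula \eqref{eqn:rho_bar_eta}, which I would rewrite as
\[ \bar\rho_\infty - \dfrac{\Phi_0}{L^d} = \dfrac{\sqrt 2\,\e^{-\frac{\kappa}{2}\Phi_0^2}}{L^d\sqrt{\pi\kappa}\bigl(1+\erf\bigl(\tfrac{\Phi_0\sqrt\kappa}{\sqrt 2}\bigr)\bigr)}. \]
Once $\Phi_0$ is known to converge to a strictly positive limit $\Phi_*$, the right-hand side is manifestly exponentially small and the asymptotic equivalent can be read off, so all of the difficulty lies in controlling $\Phi_0$ at infinity.

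First I would establish $\Phi_0\to\Phi_*>0$. For $\kappa$ large enough we lie in the regime of Lemma \ref{lm:kappa_small}, so $\bar\rho_\infty<B/|W_0|$ and hence $W_0\bar\rho_\infty+B>0$, giving $\Phi_0>0$. Combining Lemma \ref{lm:inf_bound_rho} with continuity of $\Phi$ yields $\Phi_0\to\Phi_*:=\Phi(W_0\rho^*+B)$ with $\rho^*>0$. The nontrivial point is strict positivity of $\Phi_*$, which I would argue by contradiction: assuming $\Phi_*=0$ and re-inserting this into the displayed identity, a short case distinction on whether $\Phi_0\sqrt\kappa$ tends to zero, stays bounded, or tends to $+\infty$ shows that the right-hand side is $o(1)$ in every case, so $\bar\rho_\infty\to 0$, contradicting $\rho^*>0$.

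The main obstacle is the stronger convergence $\kappa(\Phi_0^2-\Phi_*^2)\to 0$, which is needed to replace $\Phi_0$ by $\Phi_*$ inside the exponential. I would derive it from a self-consistency argument: passing to the limit in \eqref{eqn:rho_bar_eta} first gives $\rho^*=\Phi_*/L^d$, so
\[ \bar\rho_\infty - \rho^* = \Bigl(\bar\rho_\infty-\tfrac{\Phi_0}{L^d}\Bigr) + \tfrac{1}{L^d}(\Phi_0-\Phi_*). \]
By the mean value theorem, $\Phi_0-\Phi_*=\Phi'(\xi_\kappa)W_0(\bar\rho_\infty-\rho^*)$ for some $\xi_\kappa\to W_0\rho^*+B>0$, and the resulting coefficient $1-\Phi'(\xi_\kappa)W_0/L^d$ is bounded below by $1$ since $W_0<0$. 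Hence
\[ 0\leq \bar\rho_\infty-\rho^* \leq \bar\rho_\infty-\tfrac{\Phi_0}{L^d} = O\Bigl(\tfrac{\e^{-\kappa\Phi_0^2/2}}{\sqrt\kappa}\Bigr), \]
so $|\Phi_0-\Phi_*|$ is exponentially small and a fortiori $\kappa(\Phi_0^2-\Phi_*^2)\to 0$, giving $\e^{-\kappa\Phi_0^2/2}\sim\e^{-\kappa\Phi_*^2/2}$. Inserted in the first display together with $1+\erf(\Phi_0\sqrt\kappa/\sqrt 2)\to 2$, this yields the claimed equivalent
\[ \bar\rho_\infty-\tfrac{\Phi_0}{L^d}\sim \tfrac{\e^{-\kappa\Phi_*^2/2}}{L^d\sqrt{2\pi\kappa}} \]
and, in particular, the exponential decay to zero.
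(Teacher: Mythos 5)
Your proof is correct and follows the same skeleton as the paper's: start from the explicit identity \eqref{eqn:rho_bar_eta}, use Lemma \ref{lm:inf_bound_rho} to preclude $\liminf\Phi_0=0$, and conclude that $\Phi_0$ has a strictly positive limit $\Phi_*$. The genuine difference is at the last step. The paper closes with ``Using this fact and the properties of the error function in \eqref{eqn:equivalent}, we obtain the desired equivalent'', which silently requires $\kappa(\Phi_0^2-\Phi_*^2)\to 0$ --- convergence of $\Phi_0$ to $\Phi_*$ alone is not enough to replace $\e^{-\kappa\Phi_0^2/2}$ by $\e^{-\kappa\Phi_*^2/2}$. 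You identify this as the main obstacle and supply the missing estimate via a self-consistency argument: from
\[
\bar\rho_\infty-\rho^*=\Bigl(\bar\rho_\infty-\tfrac{\Phi_0}{L^d}\Bigr)+\tfrac1{L^d}(\Phi_0-\Phi_*),
\qquad
\Phi_0-\Phi_*=\Phi'(\xi_\kappa)W_0(\bar\rho_\infty-\rho^*),
\]
and $1-\Phi'(\xi_\kappa)W_0/L^d\ge 1$, you get $0\le\bar\rho_\infty-\rho^*\le\bar\rho_\infty-\Phi_0/L^d$, hence $|\Phi_0-\Phi_*|$ is itself exponentially small, which in turn gives $\kappa(\Phi_0^2-\Phi_*^2)\to 0$ and the asymptotic equivalent. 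This makes the argument fully rigorous where the paper is terse. Two minor remarks: (i) your contradiction argument for $\Phi_*>0$ is redundant, since $\rho^*<B/|W_0|$ already gives $W_0\rho^*+B>0$ and hence $\Phi_*=\Phi(W_0\rho^*+B)>0$ directly from Assumption \ref{as:1}; (ii) the mean value theorem step needs $\Phi\in C^1$ near $W_0\rho^*+B$, which holds because this quantity is strictly positive and Assumption \ref{as:1} gives $\Phi\in C^1(\R^*)$ --- worth spelling out.
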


\begin{proof}
	By \eqref{eqn:rho_bar_eta},
	\begin{equation}\label{eqn:equivalent}
	    \bar \rho_\infty - \dfrac{ \Phi_0^\kappa}{L^d} = \dfrac{\sqrt2 \e^{-\frac\kappa2  (\Phi_0^\kappa)^2}}{ L^d \sqrt{\pi\kappa} \left(1+\mathrm{erf}\left(\tfrac{ \Phi_0^\kappa\sqrt{\kappa}}{\sqrt2}\right)\right)}.
	\end{equation}
	Since $ \Phi_0^\kappa$ is bounded, we have
	\begin{equation}\label{eqn:first_conv_0}  \lim_{\kappa\to+\infty} \left(\bar \rho_\infty - \dfrac{ \Phi_0^\kappa}{L^d}\right) = 0.  \end{equation}
	If $\liminf_{\kappa \to +\infty}  \Phi_0^\kappa = 0$,
	then $\liminf_{\kappa \to +\infty} \bar\rho_\infty = 0$, which contradicts Lemma \ref{lm:inf_bound_rho}. Therefore, the convergence in \eqref{eqn:first_conv_0} is exponentially fast.
	
	Last, by Lemma \ref{lm:der_bar_rho}, $\bar \rho_\infty$ is decreasing, thereby $ \Phi_0^\kappa$ is non-increasing. But we also know that $ \Phi_0^\kappa$ is bounded from above by $\Phi(B)$. Hence, there exists a positive limit $\Phi_* > 0$ for $ \Phi_0^\kappa$ when $\kappa\to+\infty$. Using this fact and the properties of the error function in \eqref{eqn:equivalent}, we obtain the desired equivalent.
\end{proof}

\begin{remark}
    Since $\Phi_* = L^d \rho^*$, given a function $\Phi$ it may be possible to find those values explicitly. For example, if $\Phi(x)= \max(x,0)$ is the ReLU function, then
    \[ W_0 \rho^* + B = L^d \rho^*,   \]
    and thus
    \[ \rho^* = \dfrac{B}{L^d+|W_0|},\qquad \Phi_* = \dfrac{L^d B}{L^d+|W_0|}.    \]
\end{remark}

\subsection{Fréchet derivatives of the functional}
In order to establish the existence of bifurcating branches from the branch of the spatially homogeneous stationary state, we need to compute a few Fréchet derivatives. 
\begin{lemma}
	Grant Assumption \ref{as:1}. Then $\bar G$ is $C^3$ Fréchet differentiable on $L^2(\mathbb T^d)\times (0,+\infty)$. Let $h_1\in L^2(\mathbb T^d)$, the first order Fréchet derivatives of $\bar{\mathcal G}$ at a point $(\bar \rho , \kappa)$ are
	\begin{equation*}
		D_{\bar \rho} \bar{\mathcal G}(\bar \rho 	,\kappa)[h_1]\ =\ h_1 - \Phi'_{\bar\rho}  \left (\dfrac 1 {L^d} - L^d\big( \bar \rho-\bar{\mathcal G}(\bar \rho ,\kappa)\big) \left(\bar \rho - \dfrac{\Phi_{\bar\rho}}{L^d}  \right)\kappa\right)  \, W \ast h_1
	\end{equation*}
	and
	\begin{equation}\label{eqn:D_kappa_full}
		D_\kappa \bar{\mathcal G}(\bar\rho ,\kappa) =\dfrac{1}{2\kappa }\bigg(  \bar{\mathcal G}(\bar \rho,\kappa) 	+ \left(1 + L^d\Phi_{\bar\rho}\big(\bar\rho -\bar{\mathcal G}(\bar\rho ,\kappa)\big)\kappa \right)\left(\bar \rho - \dfrac{\Phi_{\bar\rho}}{L^d}\right) \bigg) .
	\end{equation}
	Let $\rho_\infty^\kappa$ be a smooth constant in space steady state of \eqref{eqn:2} associated with a parameter $\kappa$. Then,
	\begin{equation*}
		D_{\bar \rho} \bar{\mathcal G}(\bar \rho_\infty^\kappa ,\kappa)[h_1]\ =\ h_1 -  (\Phi_0^\kappa)'  \left (\dfrac 1 {L^d} - L^d\bar \rho_\infty^\kappa  \left(\bar \rho_\infty^\kappa - \dfrac{ \Phi_0^\kappa}{L^d}\right)\kappa \right)  \, W \ast h_1,
	\end{equation*}
	and
	\begin{equation*}
		D_\kappa \bar{\mathcal G}(\bar \rho_\infty^\kappa,\kappa) = \dfrac{1}{2\kappa }\left(1 + L^d \Phi_0^\kappa\bar \rho_\infty^\kappa\kappa \right)\left(\bar \rho_\infty^\kappa - \dfrac{ \Phi_0^\kappa}{L^d}\right) .
	\end{equation*}
	
\end{lemma}

\begin{proof}
	 To prove smoothness of the functional, let us remark first that by Taylor expansion of $\Phi$, which is $C^3$, we have for all $h\in L^2(\mathbb T^d)$,
    \[  \Phi(W\ast(\bar\rho + h) + B) =  \Phi_{\bar\rho} + \Phi_{\bar\rho}' W\ast h + \frac12\Phi_{\bar\rho}'' (W\ast h )^2 + \frac16\Phi_{\bar\rho}''' (W \ast h)^3 + o\left( \norme{ (W \ast h)^3  }  \right).\]
    Note then, that for all $i\in\N$, $i\geq 1$, since $W, W\ast h \in H^k(\mathbb T^d) \subset L^\infty(\mathbb T^d)$ and 
    \[ \norme{(W \ast h)^i}_{L^2}  = \sqrt{ \int_{\mathbb T^d} \left(\int_{\mathbb T^d} W(x-y) h(y) \diff y\right)^{2i} \diff x  } \leqslant \norme{W}_{L^\infty}^i \norme{h}_{L^1}^i \leqslant L^{\frac {id}2} \norme{W}_{L^\infty}^i \norme{h}_{L^2}^i. \]
    Hence the map $\bar\rho\mapsto \Phi(W\ast\bar\rho + B)$ is $C^3$ Fréchet differentiable.
 Denote $J(\bar\rho,\kappa | s) = \e^{ -\kappa \frac{ \left( s - \Phi_{\bar\rho} \right)^2}{2}}$.
    Since, by Sobolev embedding and properties of the convolution, $W\ast \bar \rho \in C^0(\mathbb T^d)$ and
    \[ \norme{W\ast \bar\rho}_{L^\infty} \leq \norme{W}_{L^\infty} \norme{ \bar \rho}_{L^1} \leqslant L^{\frac d2}\norme{W}_{L^\infty} \norme{ \bar \rho}_{L^2}, \]
    then for any $s,\kappa\in\R_+$, the functional $J(\cdot,\kappa|s)$ is locally bounded in $L^2(\mathbb T^d)$ and thus the functionals
    \[ (\bar\rho,\kappa) \mapsto \int_0^{+\infty} s J(\bar\rho,\kappa)\diff s \quad \mathrm{and} \quad (\bar\rho,\kappa) \mapsto \int_{0}^{+\infty} J(\bar\rho,\kappa) \diff s \]
    are Fréchet differentiable by dominated convergence. Hence, $\bar{\mathcal G}$ is $C^3$ Fréchet differentiable by arithmetic operations where all products and quotients involve strictly positive $C^3$ functions.

	Note that
	\[   \dfrac{1}{Z_\rho} \int_0^{+\infty} s \exp\left( - \kappa\dfrac{ \big(s -   \Phi(  W \ast \bar \rho + B )\big)^2 }{2} \right)\diff s =  \bar \rho-\bar{\mathcal G}(\bar \rho ,\kappa). \]

	Let $h_1\in L^2(\mathbb T^d)$. 
	The first derivative in $\bar \rho$ is
	\begin{multline*}
		D_{\bar \rho} \bar{\mathcal G}(\bar \rho 	,\kappa)[h_1]\ =\ h_1 - \dfrac{\Phi_{\bar\rho}'}{Z_\rho} \int_{0}^{+\infty} s e^{- \kappa\frac{ (s -   \Phi_{\bar\rho})^2 }{2}} \kappa (s - \Phi_{\bar\rho} ) \diff s\, W \ast h_1 \\ + L^d( \bar \rho-\bar{\mathcal G}(\bar \rho ,\kappa)) \dfrac{\Phi_{\bar\rho}'}{Z_\rho}  \int_0^{+\infty} e^{- \kappa\frac{ (s -   \Phi_{\bar\rho})^2 }{2}}  \kappa (s - \Phi_{\bar\rho} )  \diff s\, W \ast h_1
	\end{multline*}
	By integration by parts 
	\[ \int_{0}^{+\infty} s e^{- \kappa\frac{ (s -   \Phi_{\bar\rho})^2 }{2}} \kappa (s - \Phi_{\bar\rho} ) \diff s = \dfrac{Z_\rho} {L^d},  \]
	and by direct integration
	\[ \int_0^{+\infty} e^{- \kappa\frac{ (s -   \Phi_{\bar\rho})^2 }{2}}  \kappa (s - \Phi_{\bar\rho} )  \diff s = e^{-\frac\kappa2 \Phi_{\bar\rho}^2}.    \]
	Hence, using that
	\[ \dfrac{e^{-\frac\kappa2 \Phi_{\bar\rho}^2}}{Z_\rho} =   \rho(x,0) = \left(\bar \rho(x) - \dfrac{\Phi_{\bar\rho}(x)}{L^d}  \right)\kappa,   \]
	we obtain
	\begin{equation*}
		D_{\bar \rho} \bar{\mathcal G}(\bar \rho 	,\kappa)[h_1]\ =\ h_1 - \Phi_{\bar\rho}'  \left (\dfrac 1 {L^d} - L^d( \bar \rho-\bar{\mathcal G}(\bar \rho ,\kappa)) \left(\bar \rho - \dfrac{\Phi_{\bar\rho}}{L^d}  \right)\kappa\right)  \, W \ast h_1
	\end{equation*}

	The first derivative in $\kappa$ is
	\begin{align*}
		D_\kappa \bar{\mathcal G}(\bar\rho ,\kappa) =& \dfrac{1}{Z_\rho}\bigg(  \int_0^{+\infty}  \dfrac s 2 \e^{- \kappa\frac{ (s -   \Phi_{\bar\rho})^2 }{2}} \big(s -   \Phi_{\bar\rho}\big)^2 \diff s \\
  &-\ L^d\big(\bar\rho -\bar{\mathcal G}(\bar\rho ,\kappa)\big) \int_{0}^{+\infty}\dfrac 12 \e^{- \kappa\frac{ (s -   \Phi_{\bar\rho})^2 }{2}} \big(s -   \Phi_{\bar\rho}\big)^2 \diff s \bigg).
	\end{align*}
		We can write it in the form
	\begin{equation*}
		D_\kappa \bar{\mathcal G}(\bar \rho,\kappa)=  	\dfrac{1}{2Z_\rho}\int_0^{+\infty}   e^{- \kappa\frac{ (s -   \Phi_{\bar\rho}(x))^2 }{2}} (s -   \Phi_{\bar\rho}(x))^2  \big(s- L^d \big(\bar\rho -\bar{\mathcal G}(\bar\rho ,\kappa)\big) \big)\diff s .
	\end{equation*}
	By integration by part, we have
	\begin{align*}
		D_\kappa \bar{\mathcal G}(\bar \rho,\kappa) = \, &  \dfrac{1}{2\kappa Z_\rho}\bigg( \left[ -e^{- \kappa\frac{ (s -   \Phi_{\bar\rho}(x))^2 }{2}} (s -   \Phi_{\bar\rho}(x))\big(s-L^d\big(\bar\rho -\bar{\mathcal G}(\bar\rho ,\kappa)\big)\big)   \right]^{+\infty}_0\\ &
		+  \int_0^{+\infty}   e^{- \kappa\frac{ (s -   \Phi_{\bar\rho}(x))^2 }{2}} \big(2s -   \Phi_{\bar\rho}(x)- L^d \big(\bar\rho -\bar{\mathcal G}(\bar\rho ,\kappa)\big) \big)\diff s \bigg).
	\end{align*}
	Moreover,
	\begin{align*}
		\dfrac{1}{Z_\rho}\bigg[ -e^{- \kappa\frac{ (s -   \Phi_{\bar\rho}(x))^2 }{2}} \big(s -   \Phi_{\bar\rho})&(s-L^d\big(\bar\rho -\bar{\mathcal G}(\bar\rho ,\kappa)\big)\big)   \bigg]^{+\infty}_0 \\ &=   L^d \Phi_{\bar\rho}(x)  \big(\bar\rho -\bar{\mathcal G}(\bar\rho ,\kappa)\big)\frac{e^{-\frac\kappa2  \Phi_{\bar\rho}(x)^2}}{Z_\rho} \\
		& =  L^d \Phi_{\bar\rho}(x)  \big(\bar\rho -\bar{\mathcal G}(\bar\rho ,\kappa)\big) \rho(x,0)\\
		& =  L^d\Phi_{\bar\rho}(x)  \big(\bar\rho(x) -\bar{\mathcal G}(\bar\rho ,\kappa)(x)\big)\left(\bar \rho(x) - \dfrac{\Phi_{\bar\rho}(x)}{L^d}  \right)\kappa,
	\end{align*}
	and using the definition of $Z_\rho$,
	\begin{align*} 
		\dfrac{1}{Z_\rho}\int_0^{+\infty}   e^{- \kappa\frac{ (s -   \Phi_{\bar\rho})^2 }{2}} \big(2s -   \Phi_{\bar\rho}-  L^d\big(\bar\rho -\bar{\mathcal G}(\bar\rho ,\kappa)\big) \big)\diff s &= 2 \bar\rho - \dfrac{1}{L^d} \big( \Phi_{\bar\rho} + L^d\big(\bar\rho -\bar{\mathcal G}(\bar\rho ,\kappa)\big)\big) \\
  &=  \bar\rho - \dfrac{\Phi_{\bar\rho}}{L^d} +\bar{\mathcal G}(\bar\rho ,\kappa).
	\end{align*}
	We obtain
	\begin{align*}
		D_\kappa \bar{\mathcal G}(\bar \rho,\kappa) & =  	\dfrac{1}{2\kappa }\bigg( \bar{\mathcal G}(\bar \rho,\kappa)  + \bar \rho - \dfrac{\Phi_{\bar\rho}}{L^d} + 
		L^d\Phi_{\bar\rho} \big(\bar\rho -\bar{\mathcal G}(\bar\rho ,\kappa)\big) \left(\bar \rho - \dfrac{\Phi_{\bar\rho}}{L^d}\right)\kappa \bigg)\\
		& = \dfrac{1}{2\kappa }\bigg(  \bar{\mathcal G}(\bar \rho,\kappa) 	+ \left(1 + L^d\Phi_{\bar\rho}\big(\bar\rho -\bar{\mathcal G}(\bar\rho ,\kappa)\big)\kappa \right)\left(\bar \rho - \dfrac{\Phi_{\bar\rho}}{L^d}\right) \bigg) .
	\end{align*}

	If we apply the first order derivatives on the constant stationary state $(\bar \rho_\infty^\kappa,\kappa)$ we obtain
	\begin{equation*}
		D_{\bar \rho} \bar{\mathcal G}(\bar \rho_\infty^\kappa ,\kappa)[h_1]\ =\ h_1 -  (\Phi_0^\kappa)'  \left (\dfrac 1 {L^d} - L^d\bar \rho_\infty^\kappa  \left(\bar \rho_\infty^\kappa - \dfrac{ \Phi_0^\kappa}{L^d}\right)\kappa \right)  \, W \ast h_1,
	\end{equation*}
	and
	\begin{equation*}
		D_\kappa \bar{\mathcal G}(\bar \rho_\infty^\kappa,\kappa) = \dfrac{1}{2\kappa } \left(1 + L^d \Phi_0^\kappa\bar \rho_\infty^\kappa\kappa \right)\left(\bar \rho_\infty^\kappa - \dfrac{ \Phi_0^\kappa}{L^d}\right).
	\end{equation*}
\end{proof}

We continue by differentiating the formula \eqref{eqn:D_kappa_full} with respect to $\bar\rho$ in order to obtain the cross derivative $D^2_{\bar\rho\kappa} \bar{\mathcal G}(\bar \rho,\kappa)$.

\begin{lemma}\label{lm:cross}
	For all $h_1\in L^2(\mathbb T^d)$, the second order Fréchet cross derivative of $\bar{\mathcal G}$ at a point $(\bar \rho , \kappa)$ is
	\begin{align*}
		D^2_{\bar\rho\kappa} \bar{\mathcal G}(\bar \rho,\kappa) [h_1] = \, & \dfrac1{2\kappa} \bigg(  D_{\bar \rho} \bar{\mathcal G}(\bar \rho,\kappa)[h_1]
		+ \left(1+ L^d \Phi_{\bar\rho}(\bar \rho - \bar{\mathcal G}(\bar \rho,\kappa))\kappa\right)\left(h_1 - \dfrac{\Phi'_{\bar\rho}}{L^d} W\ast h_1\right) \\
		 & +L^d\kappa\left( \Phi_{\bar\rho}' (\bar \rho - \bar{\mathcal G}(\bar \rho,\kappa)) W\ast h_1 + \Phi_{\bar\rho}(h_1-D_{\bar \rho} \bar{\mathcal G}(\bar \rho,\kappa)[h_1]) \right)\left(\bar\rho - \dfrac{\Phi_{\bar\rho}}{L^d}\right) \bigg).
	\end{align*}
	Let $\rho_\infty^\kappa$ be a smooth constant in space steady state of \eqref{eqn:2} associated with a parameter $\kappa$. Then,
	\begin{align*}
		D^2_{\bar\rho\kappa} \bar{\mathcal G}(\bar \rho_\infty^\kappa,\kappa) [h_1] = \,& \dfrac1{2\kappa} \bigg( D_{\bar \rho} \bar{\mathcal G}(\bar \rho_\infty^\kappa,\kappa)[h_1]
		+ \left(1+ L^d \Phi_0^\kappa\bar \rho_\infty^\kappa\kappa\right)\left(h_1 - \dfrac{\Phi'_0}{L^d} W\ast h_1\right) \\
		& +L^d\kappa\left(  (\Phi_0^\kappa)' \bar \rho_\infty^\kappa W\ast h_1 +  \Phi_0^\kappa(h_1-D_{\bar \rho} \bar{\mathcal G}(\bar \rho_\infty^\kappa,\kappa)[h_1]) \right)\left(\bar \rho_\infty^\kappa - \dfrac{ \Phi_0^\kappa}{L^d}\right) \bigg).
	\end{align*}
\end{lemma}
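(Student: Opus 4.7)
The plan is to obtain $D^2_{\bar\rho\kappa}\bar{\mathcal G}$ by direct differentiation of the expression for $D_\kappa \bar{\mathcal G}(\bar\rho,\kappa)$ in \eqref{eqn:D_kappa_full} with respect to $\bar\rho$, applying a Fréchet-valued product and chain rule. To keep the bookkeeping transparent I would introduce temporary names for the $\bar\rho$-dependent pieces, e.g.\ $G = \bar{\mathcal G}(\bar\rho,\kappa)$, $R = \bar\rho - G$, and $F = \bar\rho - \Phi_{\bar\rho}/L^d$, and note the four elementary derivatives that will appear:
\begin{align*}
D_{\bar\rho} G[h_1] &= D_{\bar \rho}\bar{\mathcal G}(\bar\rho,\kappa)[h_1], \qquad D_{\bar\rho}\Phi_{\bar\rho}[h_1] = \Phi'_{\bar\rho}\, W\ast h_1, \\
D_{\bar\rho} R[h_1] &= h_1 - D_{\bar\rho}\bar{\mathcal G}(\bar\rho,\kappa)[h_1], \qquad D_{\bar\rho} F[h_1] = h_1 - \tfrac{\Phi'_{\bar\rho}}{L^d} W\ast h_1.
\end{align*}
The first and third derivatives are read off from Lemma \ref{lm:D_1}; the other two follow from the classical chain rule applied to $\Phi\circ(W\ast\cdot + B)$ together with linearity.

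Rewriting \eqref{eqn:D_kappa_full} in the compact form
\begin{equation*}
D_\kappa \bar{\mathcal G}(\bar\rho,\kappa) = \frac{1}{2\kappa}\Big(G + (1 + L^d \Phi_{\bar\rho} R \kappa)\, F\Big),
\end{equation*}
I would then apply the product rule to the second summand: differentiating the factor $(1 + L^d \Phi_{\bar\rho} R \kappa)$ yields $L^d\kappa(\Phi'_{\bar\rho}(W\ast h_1) R + \Phi_{\bar\rho}(h_1 - DG))$, and differentiating the factor $F$ yields $h_1 - \Phi'_{\bar\rho}(W\ast h_1)/L^d$. Assembling these with the $\tfrac{1}{2\kappa}$ prefactor and the derivative $DG$ of the stand-alone $G$ term reproduces exactly the claimed identity; the computation is linear in $h_1$, so continuity in $h_1$ is automatic and no approximation argument is needed.

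To obtain the specialisation at $(\bar\rho_\infty^\kappa,\kappa)$, I would use that $\bar{\mathcal G}(\bar\rho_\infty^\kappa,\kappa)=0$ (so $R = \bar\rho_\infty^\kappa$ and the stand-alone $G$ disappears) and that $\Phi_{\bar\rho_\infty^\kappa} = \Phi_0$, $\Phi'_{\bar\rho_\infty^\kappa} = \Phi'_0$ are constants in $x$; substituting into the general formula immediately delivers the stated expression. There is no genuine obstacle here: the computation is mechanical, and the only care that is needed is tracking signs and not forgetting that $\bar{\mathcal G}$ itself depends on $\bar\rho$ through the exponential factor inside $Z$, which is already handled by the abstract $D_{\bar\rho}\bar{\mathcal G}$ notation rather than by differentiating under the integral afresh. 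Writing the proof so that the final assembly of the four pieces is displayed cleanly will be the only mildly delicate point, as it is easy to miscount the occurrences of $L^d$ and $\kappa$ when multiplying the $(1 + L^d\Phi_{\bar\rho}R\kappa)$ factor through.
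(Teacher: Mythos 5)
Your proof is correct and follows exactly the route the paper takes: the paper itself introduces Lemma~\ref{lm:cross} with the words ``We continue by differentiating the formula \eqref{eqn:D_kappa_full} with respect to $\bar\rho$,'' and your compact rewriting of $D_\kappa\bar{\mathcal G}$ as $\tfrac{1}{2\kappa}\bigl(G + (1 + L^d\Phi_{\bar\rho} R\kappa)F\bigr)$ followed by the product/chain rule reproduces the claimed identity term by term. One small phrasing slip: at $\bar\rho=\bar\rho_\infty^\kappa$ it is the occurrences of the \emph{value} $\bar{\mathcal G}(\bar\rho_\infty^\kappa,\kappa)=0$ inside $R$ that drop out, not the $D_{\bar\rho}\bar{\mathcal G}[h_1]$ term (which of course survives in the specialised formula); your final expression gets this right, so the slip is only in the prose.
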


In the same way, we can compute the second and third order derivatives in $\bar\rho$. Denote
\begin{align*}
   \Phi''_{\bar\rho} &= \Phi''( W\ast \bar\rho + B ), \qquad\qquad\qquad (\Phi_0^\kappa)'' = \Phi''( W_0 \bar\rho_\infty^\kappa + B ),\\ \Phi'''_{\bar\rho}  &= \Phi'''( W\ast \bar\rho + B ),\qquad \mathrm{and} \qquad (\Phi^\kappa_0)''' = \Phi'''( W_0 \bar\rho_\infty^\kappa + B ).
\end{align*}
We have the following lemma.

\begin{lemma}\label{lm:third}
	Let $\rho_\infty^\kappa$ be a smooth constant in space steady state of \eqref{eqn:2} associated with a parameter $\kappa$. Then, for all $h_1, h_2, h_3\in L^2(\mathbb T^d)$,
	\begin{align*}
	   D^2_{\bar\rho \bar\rho} \bar{\mathcal G}(\bar \rho_\infty^\kappa,&\kappa) [h_1, h_2] \\
	   = &\, - (\Phi^\kappa_0)''  \left (\dfrac 1 {L^d} - L^d\bar \rho_\infty^\kappa \left(\bar \rho_\infty^\kappa - \dfrac{ \Phi_0^\kappa}{L^d}  \right)\kappa\right)  \, W \ast h_1  \, W \ast h_2 \\
	   & + (\Phi^\kappa_0)' L^d \left(  
	   \big(h_2 - D_{\bar\rho} \bar{\mathcal G}(\bar \rho_\infty^\kappa,\kappa)[h_2]\big) \left(\bar \rho_\infty^\kappa - \dfrac{ \Phi_0^\kappa}{L^d} \right) + \bar \rho_\infty^\kappa \left(h_2 - \dfrac{ (\Phi_0^\kappa)'}{L^d}\, W\ast h_2 \right)
	   \right)\kappa\, W \ast h_1,
	\end{align*}
	and
	\begin{align*}
	  D^3_{\bar\rho \bar\rho  \bar\rho} &\bar{\mathcal G}(\bar \rho_\infty^\kappa,\kappa) [h_1, h_2, h_3] \\
	  = &\, - (\Phi^\kappa_0)'''  \left (\dfrac 1 {L^d} - L^d\bar \rho_\infty^\kappa \left(\bar \rho_\infty^\kappa - \dfrac{ \Phi_0^\kappa}{L^d}  \right)\kappa\right)  \, W \ast h_1  \, W \ast h_2 \, W \ast h_3 \\
	   &+ (\Phi^\kappa_0)'' L^d \left(  
	   \big(h_3 - D_{\bar\rho} \bar{\mathcal G}(\bar \rho_\infty^\kappa,\kappa)[h_3]\big) \left(\bar \rho_\infty^\kappa - \dfrac{ \Phi_0^\kappa}{L^d} \right) + \bar \rho_\infty^\kappa\left(h_3 - \dfrac{ (\Phi_0^\kappa)'}{L^d}\, W\ast h_3 \right)
	   \right)\kappa\, W \ast h_1  \, W \ast h_2\\
	   	& + (\Phi^\kappa_0)'' L^d \left( 
	   \big(h_2 - D_{\bar\rho} \bar{\mathcal G}(\bar \rho_\infty^\kappa,\kappa)[h_2]\big) \left(\bar \rho_\infty^\kappa - \dfrac{ \Phi_0^\kappa}{L^d} \right) 
	   + \bar \rho_\infty^\kappa \left(h_2 - \dfrac{ (\Phi_0^\kappa)'}{L^d}\, W\ast h_2 \right)
	   \right)\kappa\, W \ast h_1 \, W\ast h_3 \\
	   	 & + (\Phi^\kappa_0)' L^d \Bigg[ 
	   	  - D^2_{\bar\rho\bar \rho} \bar{\mathcal G}(\bar \rho_\infty^\kappa,\kappa)[h_2,h_3]\left(\bar \rho_\infty^\kappa - \dfrac{ \Phi_0^\kappa}{L^d} \right) +
	   \big(h_2 - D_{\bar\rho} \bar{\mathcal G}(\bar \rho_\infty^\kappa,\kappa)[h_2]\big) \left(h_3 - \dfrac{ (\Phi_0^\kappa)'}{L^d}\, W\ast h_3 \right)  \\
	   	&  +  \big(h_3 - D_{\bar\rho} \bar{\mathcal G}(\bar \rho_\infty^\kappa,\kappa)[h_3]\big)\left(h_2 - \dfrac{(\Phi^\kappa_{0})'}{L^d}\, W\ast h_2 \right) - \bar \rho_\infty^\kappa \dfrac{ (\Phi_0^\kappa)''}{L^d}\, W\ast h_2 \, W\ast h_3\Bigg]
	   \kappa\, W \ast h_1 .
	\end{align*}
\end{lemma}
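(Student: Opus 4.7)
The plan is to start from the general-position formula for $D_{\bar\rho}\bar{\mathcal G}(\bar\rho,\kappa)[h_1]$ given in Lemma \ref{lm:D_1} and differentiate it once or twice more in $\bar\rho$ by the Fréchet chain and product rules, specialising to $\bar\rho = \bar\rho_\infty^\kappa$ only at the end; at the stationary state $\bar{\mathcal G}(\bar\rho_\infty^\kappa,\kappa)=0$, so the factor $\bar\rho - \bar{\mathcal G}$ collapses to $\bar\rho_\infty^\kappa$. No new integrals appear: the computation reduces to bookkeeping based on the elementary identities
\begin{equation*}
\partial_{\bar\rho}\Phi^{(j)}_{\bar\rho}[h] = \Phi^{(j+1)}_{\bar\rho}\, W\ast h, \qquad \partial_{\bar\rho}\bar{\mathcal G}(\bar\rho,\kappa)[h] = D_{\bar\rho}\bar{\mathcal G}(\bar\rho,\kappa)[h],
\end{equation*}
together with $\partial_{\bar\rho}(\bar\rho - \bar{\mathcal G})[h] = h - D_{\bar\rho}\bar{\mathcal G}[h]$ and $\partial_{\bar\rho}(\bar\rho - \Phi_{\bar\rho}/L^d)[h] = h - (\Phi'_{\bar\rho}/L^d)\,W\ast h$.

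First, I would recast Lemma \ref{lm:D_1} in the compact form $D_{\bar\rho}\bar{\mathcal G}(\bar\rho,\kappa)[h_1] = h_1 - \Phi'_{\bar\rho}\,K(\bar\rho,\kappa)\,W\ast h_1$, where $K(\bar\rho,\kappa) := \tfrac{1}{L^d} - L^d\kappa\,(\bar\rho - \bar{\mathcal G}(\bar\rho,\kappa))(\bar\rho - \Phi_{\bar\rho}/L^d)$. Differentiating in direction $h_2$ yields
\begin{equation*}
D^2_{\bar\rho\bar\rho}\bar{\mathcal G}(\bar\rho,\kappa)[h_1,h_2] = -\Phi''_{\bar\rho}\,K\,W\ast h_2\,W\ast h_1 \;-\; \Phi'_{\bar\rho}\,\partial_{\bar\rho}K[h_2]\,W\ast h_1,
\end{equation*}
where $\partial_{\bar\rho}K[h_2]$ expands via the product rule into two symmetric contributions involving $(\bar\rho - \bar{\mathcal G})$ and $(\bar\rho - \Phi_{\bar\rho}/L^d)$. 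Evaluating at $(\bar\rho_\infty^\kappa,\kappa)$ and regrouping the $\kappa L^d$-prefactors reproduces the stated formula for $D^2_{\bar\rho\bar\rho}\bar{\mathcal G}(\bar\rho_\infty^\kappa,\kappa)[h_1,h_2]$.

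Second, rather than restarting from scratch, I would differentiate the general second-derivative expression once more in direction $h_3$. The chain rule promotes $\Phi''_{\bar\rho}$ to $\Phi'''_{\bar\rho}\,W\ast h_3$ and $\Phi'_{\bar\rho}$ to $\Phi''_{\bar\rho}\,W\ast h_3$, while the product rule acts on every remaining factor $(\bar\rho - \bar{\mathcal G})$, $(\bar\rho - \Phi_{\bar\rho}/L^d)$ and on $D_{\bar\rho}\bar{\mathcal G}(\bar\rho,\kappa)[h_2]$ itself — the latter producing the self-referential $D^2_{\bar\rho\bar\rho}\bar{\mathcal G}[h_2,h_3]$ visible in the statement. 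Specialising again to $(\bar\rho_\infty^\kappa,\kappa)$ assembles the result into the four blocks of the claimed formula: one pure $\Phi'''_0$-block, two symmetric $\Phi''_0$-blocks (arising respectively from $\partial_{\bar\rho}\Phi''_{\bar\rho}$ and $\partial_{\bar\rho}\Phi'_{\bar\rho}$), and one $\Phi'_0$-block gathering all derivatives internal to the parenthesised expression.

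The work is essentially mechanical; the only real difficulty is bookkeeping, as the third derivative spawns roughly a dozen terms through repeated application of the product rule, and these must be grouped consistently by the order of $\Phi^{(j)}_0$ and by the role of each $h_j$. A convenient sanity check is that the final expression must be symmetric under any permutation of $(h_1,h_2,h_3)$ — which the stated formula indeed is, although this is not manifest from its surface form.
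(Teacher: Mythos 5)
Your proposal is correct and follows essentially the same route as the paper: differentiate the general-position formula for $D_{\bar\rho}\bar{\mathcal G}(\bar\rho,\kappa)[h_1]$ from Lemma \ref{lm:D_1} once and then twice more in $\bar\rho$ via the chain and product rules, and only at the end substitute $\bar\rho=\bar\rho_\infty^\kappa$ so that $\bar{\mathcal G}(\bar\rho_\infty^\kappa,\kappa)=0$ and $\bar\rho-\bar{\mathcal G}$ collapses to $\bar\rho_\infty^\kappa$. The compact notation $K(\bar\rho,\kappa)$ and the symmetry sanity check are sensible presentation choices, but the underlying mechanics match the paper exactly.
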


\begin{proof}
For all $h_1, h_2, h_3\in L^2(\mathbb T^d)$, the second and third order Fréchet derivative in $\bar\rho$ of $\bar{\mathcal G}$ at a point $(\bar \rho , \kappa)$ are
	\begin{align*}
	   D^2_{\bar\rho \bar\rho} &\bar{\mathcal G}(\bar \rho,\kappa) [h_1, h_2] \\
	   =&\, - \Phi''_{\bar\rho}  \left (\dfrac 1 {L^d} - L^d\big( \bar \rho-\bar{\mathcal G}(\bar \rho ,\kappa)\big) \left(\bar \rho - \dfrac{\Phi_{\bar\rho}}{L^d}  \right)\kappa\right)  \, W \ast h_1  \, W \ast h_2 \\
	   &+ \Phi'_{\bar\rho} L^d \left(  
	   \big(h_2 - D_{\bar\rho} \bar{\mathcal G}(\bar \rho,\kappa)[h_2]\big) \left(\bar \rho - \dfrac{\Phi_{\bar\rho}}{L^d} \right) + \big( \bar \rho-\bar{\mathcal G}(\bar \rho ,\kappa)\big)\left(h_2 - \dfrac{\Phi_{\bar\rho}'}{L^d}\, W\ast h_2 \right)
	   \right)\kappa\, W \ast h_1,
	\end{align*}
	and
	\begin{align*}
	   D^3_{\bar\rho \bar\rho \bar\rho} & \bar{\mathcal G}(\bar \rho,\kappa) [h_1, h_2, h_3] \\
	   =& \, - \Phi'''_{\bar\rho}  \left (\dfrac 1 {L^d} - L^d\big( \bar \rho-\bar{\mathcal G}(\bar \rho ,\kappa)\big) \left(\bar \rho - \dfrac{\Phi_{\bar\rho}}{L^d}  \right)\kappa\right)  \, W \ast h_1  \, W \ast h_2 \, W \ast h_3 \\
	   &+ \Phi''_{\bar\rho} L^d \left(  
	   \big(h_3 - D_{\bar\rho} \bar{\mathcal G}(\bar \rho,\kappa)[h_3]\big) \left(\bar \rho - \dfrac{\Phi_{\bar\rho}}{L^d} \right) + \big( \bar \rho-\bar{\mathcal G}(\bar \rho ,\kappa)\big)\left(h_3 - \dfrac{\Phi_{\bar\rho}'}{L^d}\, W\ast h_3 \right)
	   \right)\kappa\, W \ast h_1  \, W \ast h_2\\
	   	& + \Phi''_{\bar\rho} L^d  
	   \big(h_2 - D_{\bar\rho} \bar{\mathcal G}(\bar \rho,\kappa)[h_2]\big) \left(\bar \rho - \dfrac{\Phi_{\bar\rho}}{L^d} \right) 
	   \kappa\, W \ast h_1 \, W\ast h_3 \\
	   	 & + \Phi'_{\bar\rho} L^d \left( 
	   	  - D^2_{\bar\rho\bar \rho} \bar{\mathcal G}(\bar \rho,\kappa)[h_2,h_3]\left(\bar \rho - \dfrac{\Phi_{\bar\rho}}{L^d} \right) +
	   \big(h_2 - D_{\bar\rho} \bar{\mathcal G}(\bar \rho,\kappa)[h_2]\big) \left(h_3 - \dfrac{\Phi_{\bar\rho}'}{L^d}\, W\ast h_3 \right) \right)
	   \kappa\, W \ast h_1  \\
	   	& + \Phi''_{\bar\rho} L^d  \big( \bar \rho-\bar{\mathcal G}(\bar \rho ,\kappa)\big)\left(h_2 - \dfrac{\Phi_{\bar\rho}'}{L^d}\, W\ast h_2 \right)
	   \kappa\, W \ast h_1 \, W\ast h_3 \\
	   &	  + \Phi'_{\bar\rho} L^d \left( \big(h_3 - D_{\bar\rho} \bar{\mathcal G}(\bar \rho,\kappa)[h_3]\big)\left(h_2 - \dfrac{\Phi_{\bar\rho}'}{L^d}\, W\ast h_2 \right) - \big( \bar \rho-\bar{\mathcal G}(\bar \rho ,\kappa)\big) \dfrac{\Phi_{\bar\rho}''}{L^d}\, W\ast h_2 \, W\ast h_3\right)
	   \kappa\, W \ast h_1 .
	\end{align*}
Letting $\bar \rho = \bar \rho_\infty ^\kappa$ yields the result.
\end{proof}

\subsection{Proof of the main results}

We are now ready to rigorously state and prove the main results stated in Theorem \ref{thm:mainmain}. For convenience, the main theorem is split into two parts: Theorem \ref{thm:main} concerning the bifurcation points, and Theorem \ref{thm:main_2} characterising the corresponding bifurcation branches. The case of higher dimensional kernels of the $\bar\rho$ first derivative and additional symmetries is discussed in Subsection \ref{sec:high_dim}.  

We define the functional
	    \begin{equation}\label{eq:onecompfunctionalH}
	        \begin{array}{rccl}
	            \mathcal H : & L_S^2(\mathbb T^d)\times \R_+^* & \to & L_S^2(\mathbb T^d)  \\
	                         & (\bar \rho,\kappa) &\mapsto & \bar {\mathcal G} (\bar \rho_\infty^\kappa + \bar\rho, \kappa).
	        \end{array}
	    \end{equation}
	    which is the one we will apply the Crandall--Rabinowitz theorem to. For completeness, the Crandall--Rabinowitz theorem in the present context is stated in the appendix (Theorem \ref{thm:CR}). Note that for all $\kappa\in\R_+^*$, $\mathcal H(0,\kappa)=0$ and that for $\kappa>\tfrac{2|W_0|^2}{L^{2d}\pi B^2}$, owing to Assumption \ref{as:1}, the functional $\mathcal H$ satisfies all the smoothness hypotheses required. The Fréchet derivatives of $\mathcal H$ are
	    \begin{align*}
	            D_{\bar\rho} \mathcal H (\bar \rho, \kappa) & =  D_{\bar\rho} \bar{\mathcal G}(\bar \rho_\infty^\kappa + \bar \rho,\kappa), \\
	            D_{\kappa} \mathcal H (\bar \rho, \kappa) & =  D_{\kappa} \bar{\mathcal G}(\bar \rho_\infty^\kappa + \bar \rho,\kappa) + D_{\bar\rho} \bar{\mathcal G}(\bar \rho_\infty^\kappa+\bar\rho,\kappa)\left[\dfrac{\diff \bar\rho_\infty}{\diff \kappa}(\kappa)\right],\\
	            D^2_{\bar\rho \kappa} \mathcal H (\bar \rho, \kappa) & =  D^2_{\bar\rho\kappa} \bar{\mathcal G}(\bar \rho_\infty^\kappa + \bar \rho,\kappa) + D^2_{\bar\rho\bar\rho} \bar{\mathcal G}(\bar \rho_\infty^\kappa + \bar \rho,\kappa) \left[\dfrac{\diff \bar\rho_\infty}{\diff \kappa}(\kappa), \cdot\right]  ,\\
	            D_{\bar \rho \bar \rho} \mathcal H (\bar \rho, \kappa) & =  D^2_{\bar\rho \bar\rho} \bar{\mathcal G}(\bar \rho_\infty^\kappa + \bar \rho,\kappa), \\
	            D_{ \bar \rho \bar \rho \bar \rho} \mathcal H (\bar \rho, \kappa) & =  D^3_{\bar\rho \bar\rho \bar\rho} \bar{\mathcal G}(\bar \rho_\infty^\kappa + \bar \rho,\kappa).
	    \end{align*}
	    In particular, we have
\begin{align}\label{eqn:der_H_0}
	            D_{\bar\rho} \mathcal H (0, \kappa) & =  D_{\bar\rho} \bar{\mathcal G}(\bar \rho_\infty^\kappa,\kappa), \nonumber \\
	            D_{\kappa} \mathcal H (0, \kappa) & =  D_{\kappa} \bar{\mathcal G}(\bar \rho_\infty^\kappa,\kappa) + D_{\bar\rho} \bar{\mathcal G}(\bar \rho_\infty^\kappa,\kappa)\left[\dfrac{\diff \bar\rho_\infty}{\diff \kappa}(\kappa)\right],\nonumber \\
	            D^2_{\bar\rho \kappa} \mathcal H (0, \kappa) & =  D^2_{\bar\rho\kappa} \bar{\mathcal G}(\bar \rho_\infty^\kappa,\kappa)+ D^2_{\bar\rho\bar\rho} \bar{\mathcal G}(\bar \rho_\infty^\kappa,\kappa) \left[\dfrac{\diff \bar\rho_\infty}{\diff \kappa}(\kappa), \cdot\right],\\
	            D_{\bar \rho \bar \rho} \mathcal H (0, \kappa) & =  D^2_{\bar\rho \bar\rho} \bar{\mathcal G}(\bar \rho_\infty^\kappa ,\kappa), \nonumber \\
	            D_{ \bar \rho \bar \rho \bar \rho} \mathcal H (0, \kappa) & =  D^3_{\bar\rho \bar\rho \bar\rho} \bar{\mathcal G}(\bar \rho_\infty^\kappa,\kappa) \nonumber .
	    \end{align}

\begin{theorem}\label{thm:main}
	Grant Assumption \ref{as:1}. Let $\kappa\in\left(\tfrac{2|W_0|^2}{L^{2d}\pi B^2},+\infty\right)$. If there exists a unique $k^*\in\N^d$ such that
    \begin{equation}\label{eqn:W_tilde_equal}
        \dfrac{\tilde W (k^*)}{\Theta(k^*)} = \dfrac{1}{L^{\frac d2} (\Phi_0^\kappa)' \left( \dfrac1{L^d} - L^d\bar \rho_\infty^\kappa  \left(\bar \rho_\infty^\kappa - \dfrac{ \Phi_0^\kappa}{L^d}\right)\kappa \right)}.
    \end{equation}
    and $ (\Phi_0^\kappa)''\geq 0$, then $(\bar\rho_\infty^\kappa,\kappa)$ is a bifurcation point of $\bar{\mathcal G}(\bar \rho_\infty^\kappa,\kappa)=0$.
\end{theorem}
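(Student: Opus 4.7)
The plan is to apply the Crandall--Rabinowitz local bifurcation theorem (Theorem \ref{thm:CR} in the appendix) to the functional $\mathcal H$ defined in \eqref{eq:onecompfunctionalH}. Since $\mathcal H(0,\kappa)=0$ for every $\kappa$ by construction of $\bar\rho_\infty^\kappa$, and the smoothness of $\mathcal H$ in $(\bar\rho,\kappa)$ for $\kappa>\tfrac{2|W_0|^2}{L^{2d}\pi B^2}$ follows from Assumption \ref{as:1} together with Lemma \ref{lm:der_bar_rho}, we need to verify three things: (a) $D_{\bar\rho}\mathcal H(0,\kappa)$ is Fredholm of index $0$, (b) its kernel is one-dimensional and spanned by $\omega_{k^*}$, and (c) the transversality condition $D^2_{\bar\rho\kappa}\mathcal H(0,\kappa)[\omega_{k^*}]\notin \mathrm{Range}\big(D_{\bar\rho}\mathcal H(0,\kappa)\big)$ holds.

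For (a) and (b), I would use the formula from Lemma \ref{lm:D_1} which gives $D_{\bar\rho}\mathcal H(0,\kappa)[h]=h-\Phi_0'\big(\tfrac{1}{L^d}-L^d\bar\rho_\infty^\kappa(\bar\rho_\infty^\kappa-\tfrac{\Phi_0}{L^d})\kappa\big)\,W\ast h$. Writing $T[h]:=\Phi_0'\big(\tfrac{1}{L^d}-L^d\bar\rho_\infty^\kappa(\bar\rho_\infty^\kappa-\tfrac{\Phi_0}{L^d})\kappa\big)\,W\ast h$, the operator $T$ is a convolution with an $L^2_S(\mathbb T^d)$ kernel on a bounded domain, hence compact, so $D_{\bar\rho}\mathcal H(0,\kappa)=I-T$ is Fredholm of index zero. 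Using the Hilbert basis $\{\omega_k\}_{k\in\mathbb N^d}$ and the eigenrelation \eqref{trigo}, $T[\omega_k]=\Phi_0'\big(\tfrac{1}{L^d}-L^d\bar\rho_\infty^\kappa(\bar\rho_\infty^\kappa-\tfrac{\Phi_0}{L^d})\kappa\big)\tfrac{L^{d/2}\tilde W(k)}{\Theta(k)}\omega_k$, so $\omega_k\in \ker D_{\bar\rho}\mathcal H(0,\kappa)$ exactly when \eqref{eqn:W_tilde_equal} is satisfied; by the uniqueness of $k^*$ the kernel is spanned by $\omega_{k^*}$. The operator is symmetric on $L^2_S(\mathbb T^d)$ because $W\in L^2_S(\mathbb T^d)$, so the range is the orthogonal complement of $\omega_{k^*}$.

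The main obstacle is the transversality condition (c). To verify it, I would start from the expression for $D^2_{\bar\rho\kappa}\mathcal H(0,\kappa)$ in \eqref{eqn:der_H_0}, inserting $h=\omega_{k^*}$ and using the formulas of Lemma \ref{lm:cross} and Lemma \ref{lm:D_1} together with Lemma \ref{lm:der_bar_rho} for $\tfrac{\diff \bar\rho_\infty}{\diff\kappa}$. Since $D_{\bar\rho}\mathcal H(0,\kappa)[\omega_{k^*}]=0$ by construction, several terms in Lemma \ref{lm:cross} collapse. Crucially, the contribution involving $\tfrac{\diff \bar\rho_\infty}{\diff\kappa}$ pushes the image into the direction of $\omega_0$ and constants (since $\bar\rho_\infty^\kappa$ is spatially constant, its Fréchet derivative produces functions proportional to $1=\omega_0$), so this piece does not interfere with the $\omega_{k^*}$ component of the output when $k^*\neq 0$. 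The remaining relevant contributions are proportional to $\omega_{k^*}$ and the coefficient can be shown to be nonzero using Lemma \ref{lm:g_eta} (which guarantees $\tfrac{1}{L^d}-L^d\bar\rho_\infty^\kappa(\bar\rho_\infty^\kappa-\tfrac{\Phi_0}{L^d})\kappa>0$) and Lemma \ref{lm:equiv} (ensuring $\bar\rho_\infty^\kappa-\tfrac{\Phi_0}{L^d}>0$). The assumption $\Phi_0''\geq 0$ is used to prevent sign cancellations when collecting these terms, so that the $\omega_{k^*}$-coefficient is strictly nonzero, placing $D^2_{\bar\rho\kappa}\mathcal H(0,\kappa)[\omega_{k^*}]$ outside $\mathrm{Range}(D_{\bar\rho}\mathcal H(0,\kappa))=\mathrm{span}(\omega_{k^*})^\perp$.

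The case $k^*=0$ would require extra care because the directions coming from $\tfrac{\diff \bar\rho_\infty}{\diff\kappa}$ overlap with $\omega_{k^*}$, but the explicit expression from Lemma \ref{lm:der_bar_rho} together with \eqref{eqn:W_tilde_equal} allows one to check sign compatibility in that degenerate case as well. Once all three hypotheses of Crandall--Rabinowitz are verified, the theorem yields that $(0,\kappa)$ is a bifurcation point of $\mathcal H(\bar\rho,\kappa)=0$, which by definition of $\mathcal H$ is equivalent to $(\bar\rho_\infty^\kappa,\kappa)$ being a bifurcation point of $\bar{\mathcal G}(\bar\rho,\kappa)=0$, as claimed.
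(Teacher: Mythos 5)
Your overall strategy matches the paper's: apply Crandall--Rabinowitz to $\mathcal H$ from \eqref{eq:onecompfunctionalH}, and the Fredholm/compactness part and the diagonalisation giving a one-dimensional kernel spanned by $\omega_{k^*}$ are both correct (the paper proves compactness via the Hilbert--Schmidt norm rather than invoking compactness of $L^2$-convolution directly, but the two are equivalent here).

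There is, however, a genuine error in your verification of the transversality condition (c). You assert that the term $D^2_{\bar\rho\bar\rho}\bar{\mathcal G}(\bar\rho_\infty^\kappa,\kappa)\bigl[\tfrac{\diff \bar\rho_\infty}{\diff\kappa}(\kappa),\omega_{k^*}\bigr]$ ``pushes the image into the direction of $\omega_0$'' because $\tfrac{\diff \bar\rho_\infty}{\diff\kappa}$ is spatially constant, and hence does not interfere with the $\omega_{k^*}$-component. This is false. Inspecting Lemma \ref{lm:third}, the bilinear form $D^2_{\bar\rho\bar\rho}\bar{\mathcal G}$ evaluated on $[h_1,h_2]$ contains products like $(W\ast h_1)(W\ast h_2)$ and $(W\ast h_1)h_2$; feeding in $h_1 = c\,\omega_0$ (a constant) and $h_2=\omega_{k^*}$ gives $W_0\,c\cdot W\ast\omega_{k^*} \propto \omega_{k^*}$, etc. Every surviving term is proportional to $\omega_{k^*}$, not $\omega_0$. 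So this contribution is fully entangled with the $\omega_{k^*}$-component of $D^2_{\bar\rho\kappa}\mathcal H(0,\kappa)[\omega_{k^*}]$ and cannot be discarded. Your argument therefore only controls the sign of the $D^2_{\bar\rho\kappa}\bar{\mathcal G}$ piece, and leaves open the possibility of cancellation with the $D^2_{\bar\rho\bar\rho}\bar{\mathcal G}[\tfrac{\diff\bar\rho_\infty}{\diff\kappa},\cdot]$ piece.

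The paper closes this precisely where your argument leaves a hole. After substituting $W\ast\omega_{k^*}=C_1\omega_{k^*}$ and simplifying, it reduces the total coefficient of $\omega_{k^*}$ in $D^2_{\bar\rho\kappa}\mathcal H(0,\kappa)[\omega_{k^*}]$ to the sum in \eqref{eqn:no_more_idea_for_labels_3}, namely a term $-\tfrac{\Phi_0''W_0}{\Phi_0'}\tfrac{\diff\bar\rho_\infty}{\diff\kappa}(\kappa)$ (this is where $\Phi_0''\geq0$ forces a non-positive sign, given $W_0<0$ and $\tfrac{\diff\bar\rho_\infty}{\diff\kappa}<0$ from Lemma \ref{lm:der_bar_rho}) plus a product whose crucial factor is $A_3 = 1-\tfrac{L^d(\bar\rho_\infty^\kappa)^2\kappa}{\tfrac1{L^d}-L^d\bar\rho_\infty^\kappa(\bar\rho_\infty^\kappa-\tfrac{\Phi_0}{L^d})\kappa}$; it then shows $A_3<0$ by reducing it to $h(\eta)>\tfrac12$, where $h$ is the increasing auxiliary function from the proof of Lemma \ref{lm:g_eta}. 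Only the monotonicity of $h$ yields the strict sign, and it is the matching sign of all three pieces that gives the transversality. Your appeal to Lemma \ref{lm:g_eta} and Lemma \ref{lm:equiv} establishes that certain factors are positive, but it does not touch this key $A_3$ sign and misidentifies the role of $\Phi_0''\geq0$ as ``preventing cancellation'' among terms you earlier claimed to have discarded. Finally, the ``$k^*=0$ case'' you worry about cannot occur: the $k=0$ diagonal entry of $D_{\bar\rho}\mathcal H(0,\kappa)$ is $1-L^{\tfrac d2}\Phi_0'(\cdots)W_0>0$ since $W_0<0$, so \eqref{eqn:W_tilde_equal} can never select $k^*=0$ under Assumption \ref{as:1}.
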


\begin{proof}

		\noindent \emph{i)} We first rewrite
		\[  	D_{\bar\rho} \mathcal H(0,\kappa) = I -  T,   \]
		with $ T : L^2_S(\mathbb S) \to L^2_S(\mathbb S)$ defined by
		\[   T h_1 =  (\Phi_0^\kappa)'  \left (\dfrac1{L^d} - 
		L^d\bar \rho_\infty^\kappa  \left(\bar \rho_\infty^\kappa - \dfrac{ \Phi_0^\kappa}{L^d}\right)\kappa \right)\, W \ast h_1. \]
		First, we have to check that the operator $D_{\bar\rho} \mathcal H(0,\kappa)$ is not the identity, that is to say $T\neq 0$. We know that $ (\Phi_0^\kappa)'\neq 0$ by Assumption \ref{as:1} and Lemma \ref{lm:kappa_small}. Hence, by Lemma \ref{lm:g_eta}, $T\neq 0$.
		
		\noindent \emph{ii)} Let the basis of $L^2_S(\mathbb{T}^d )$ be as described in Section \ref{sec:basis}. By \eqref{trigo}, for all $k\in\N^d$, we have
		\begin{align*}  \norme{ T \omega_k}_2^2 & = \norme{    (\Phi_0^\kappa)'\left (\dfrac1{L^d} - 
		L^d\bar \rho_\infty^\kappa  \left(\bar \rho_\infty^\kappa - \dfrac{ \Phi_0^\kappa}{L^d}\right)\kappa \right) \dfrac{\tilde W(k)}{\Theta(k)} \omega_k(\cdot)   }_2^2\\ 
		& = \left|  (\Phi_0^\kappa)'\left (\dfrac1{L^d} - 
		L^d\bar \rho_\infty^\kappa  \left(\bar \rho_\infty^\kappa - \dfrac{ \Phi_0^\kappa}{L^d}\right)\kappa \right) \dfrac{\tilde W(k)}{\Theta(k)}\right|. \end{align*}
		Hence, the Hilbert--Schmidt norm
		 \[  \norme{ T}_{HS}^2 = \sum_{k\in\N} \norme{ T \omega_k}_2^2,\]
		is finite and $ T$ is a Hilbert--Schmidt operator. Therefore, $D_{\bar\rho} \mathcal H(0,\kappa)$ is a Fredholm operator.
		
		\noindent \emph{iii)} Note that the mapping $z\mapsto I + z W\ast\cdot$ is norm-continous; indeed, for all $z_1,z_2\in\R$,
		\begin{equation*}\norme{(I + z_1 W\ast\cdot) - (I + z_2W\ast\cdot)} = | z_1-z_2 | \norme{W\ast \cdot}. \end{equation*}
		Hence, taking \[z= - (\Phi_0^\kappa)'\left (\dfrac1{L^d} - 
		L^d\bar \rho_\infty^\kappa  \left(\bar \rho_\infty^\kappa - \dfrac{ \Phi_0^\kappa}{L^d}\right)\kappa \right),\] 
		and applying the invariance by homotopy of the index of Fredholm operators, we have 
		$\mathrm{Ind}(D_{\bar\rho} \mathcal H(0,\kappa)) = \mathrm{Ind}(I)=0$.
		
		\noindent \emph{iv)} We now diagonalise $D_{\bar \rho} \mathcal H(0,\kappa)$ with respect to the orthonormal basis defined in Section \ref{sec:basis}:
		\[
		D_{\bar\rho} \mathcal H(0,\kappa)[\omega_{k}](x) =  \left\{\begin{array}{ll}
			 1 - L^\frac d2 (\Phi_0^\kappa)'  \left(\dfrac1{L^d} - L^d\bar \rho_\infty^\kappa  \left(\bar \rho_\infty^\kappa - \dfrac{ \Phi_0^\kappa}{L^d}\right)\kappa \right)W_0,\qquad & if\ k=0,\\
		     \left( 1 - \dfrac{L^{\frac d2}F(k)}{\Theta(k)}  \left (\dfrac1{L^d} - L^d\bar \rho_\infty^\kappa  \left(\bar \rho_\infty^\kappa - \dfrac{ \Phi_0^\kappa}{L^d}\right)\kappa \right) \right) \omega_k(x), \qquad & otherwise,
		\end{array}
		\right.
		\]
		with $F(k) =  (\Phi_0^\kappa)'  \tilde W(k)$. Since $W_0<0$, and by Lemma \ref{lm:g_eta},
		\[ 1 - L^\frac d2 (\Phi_0^\kappa)'  \left(\dfrac1{L^d} - L^d\bar \rho_\infty^\kappa  \left(\bar \rho_\infty^\kappa - \dfrac{ \Phi_0^\kappa}{L^d}\right)\kappa \right)W_0 > 0. \]
		By assumption \eqref{eqn:W_tilde_equal_0}, the kernel of $D_{\bar\rho} \mathcal H(0,\kappa)$ is thus one dimensional.
		
		\noindent\emph{v)} Last, we need to check that if $v\in \mathrm{Ker}\big(D_{\bar \rho} \mathcal H(0,\kappa)\big)$ and $\norme{v}=1$, then $D^2_{\bar\rho\kappa} \mathcal H(0,\kappa) [v]\notin \mathrm{Im}\big(D_{\bar \rho} \mathcal H(0,\kappa)\big)$.
		Since $W$ is symmetric, the operator $I-T$ is self-adjunct. Then, since $\mathrm{Im}(I-T)$ is closed, we have
		\[ \mathrm{Im}(I-T) = \mathrm{Ker}( (I-T)^*)^\perp = \mathrm{Ker}(I-T)^\perp.  \]
		As a result, we just need to check that 
		\[ \pscal{D^2_{\bar\rho\kappa} \mathcal H(0,\kappa) [v]}{ v } \neq 0.\]
		According to \eqref{eqn:der_H_0},
		\begin{equation*}
		    D^2_{\bar\rho \kappa} \mathcal H (0, \kappa)[v]  =  D^2_{\bar\rho\kappa} \bar{\mathcal G}(\bar \rho_\infty^\kappa,\kappa)[v]+ D^2_{\bar\rho\bar\rho} \bar{\mathcal G}(\bar \rho_\infty^\kappa,\kappa) \left[\dfrac{\diff \bar\rho_\infty}{\diff \kappa}(\kappa), v\right].
		\end{equation*}
		Applying the formula in Lemma \ref{lm:cross}, and noticing that $v\in \mathrm{Ker}\big(D_{\bar \rho} \bar{\mathcal G}(\bar\rho_\infty^\kappa,\kappa)\big)$, we obtain
		\begin{equation*}
		D^2_{\bar\rho\kappa} \bar{\mathcal G}(\bar \rho_\infty^\kappa,\kappa) [v] = 	\dfrac{1}{2\kappa}\left(1+ L^d \Phi_0^\kappa\bar \rho_\infty^\kappa\kappa\right)\left(v - \dfrac{(\Phi^\kappa_0)'}{L^d} W\ast v\right) + \dfrac{L^d}{2} 
		\left(  (\Phi_0^\kappa)' \bar \rho_\infty^\kappa W\ast v +  \Phi_0^\kappa v \right)\left(\bar \rho_\infty^\kappa - \dfrac{ \Phi_0^\kappa}{L^d}\right).
	    \end{equation*}
		Since $v\in \mathrm{Ker}\big(D_{\bar \rho} \bar{\mathcal G}(\bar \rho_\infty,\kappa)\big)$, we have
		\[ W\ast v =  \dfrac{1}{ (\Phi_0^\kappa)' \left (\dfrac{1}{L^d} - L^d\bar \rho_\infty^\kappa\left(\bar \rho_\infty^\kappa - \dfrac{ \Phi_0^\kappa}{L^d}\right)\kappa \right)}v.    \]
		Hence,
		\begin{align}\label{eqn:brick_1}  \dfrac{1}{2\kappa}&\left(1+ L^d \Phi_0^\kappa\bar \rho_\infty^\kappa\kappa\right)\left(v - \dfrac{(\Phi^\kappa_0)'}{L^d} W\ast v\right) \nonumber \\
		& = -\dfrac{1}{2\kappa}\left(1+ L^d \Phi_0^\kappa\bar \rho_\infty^\kappa\kappa\right) \dfrac{L^{2d}\bar \rho_\infty^\kappa\left(\bar \rho_\infty^\kappa - \dfrac{ \Phi_0^\kappa}{L^d}\right)\kappa}{L^d\left(\dfrac{1}{L^d} - L^d\bar \rho_\infty^\kappa\left(\bar \rho_\infty^\kappa - \dfrac{ \Phi_0^\kappa}{L^d}\right)\kappa\right)} v\\ 
		&= -\dfrac{L^d}{2}\left(1+ L^d \Phi_0^\kappa\bar \rho_\infty^\kappa\kappa\right) \dfrac{\bar \rho_\infty^\kappa\left(\bar \rho_\infty^\kappa - \dfrac{ \Phi_0^\kappa}{L^d}\right)}{\dfrac{1}{L^d} - L^d\bar \rho_\infty^\kappa\left(\bar \rho_\infty^\kappa - \dfrac{ \Phi_0^\kappa}{L^d}\right)\kappa} v, \nonumber
			\end{align}  
		and
		\begin{align}\label{eqn:brick_2}  \dfrac{L^d}{2} & 
		\left(  (\Phi_0^\kappa)' \bar \rho_\infty^\kappa W\ast v +  \Phi_0^\kappa v \right)\left(\bar \rho_\infty^\kappa - \dfrac{ \Phi_0^\kappa}{L^d}\right)\nonumber \\ &= \dfrac{L^d}{2}\left( \dfrac{\bar\rho_\infty^\kappa}{\dfrac{1}{L^d} - L^d\bar \rho_\infty^\kappa\left(\bar \rho_\infty^\kappa - \dfrac{ \Phi_0^\kappa}{L^d}\right)\kappa} +  \Phi_0^\kappa \right)\left(\bar \rho_\infty^\kappa - \dfrac{ \Phi_0^\kappa}{L^d}\right)  v.
		\end{align}
		Collecting \eqref{eqn:brick_1} and \eqref{eqn:brick_2}, we obtain
		\begin{align*} 
  D^2_{\bar\rho\kappa} \bar{\mathcal G}(\bar \rho_\infty^\kappa,\kappa) [v] & = \dfrac{L^d}{2}\left( \dfrac{\bar\rho_\infty^\kappa - \left(1+ L^d \Phi_0^\kappa\bar \rho_\infty^\kappa\kappa\right)\bar\rho_\infty^\kappa}{\dfrac{1}{L^d} - L^d\bar \rho_\infty^\kappa\left(\bar \rho_\infty^\kappa - \dfrac{ \Phi_0^\kappa}{L^d}\right)\kappa} +  \Phi_0^\kappa \right)\left(\bar \rho_\infty^\kappa - \dfrac{ \Phi_0^\kappa}{L^d}\right)  v\\ \nonumber
		& = \dfrac{L^d \Phi_0^\kappa}{2}\left( 1 - \dfrac{ L^d(\bar \rho_\infty^\kappa)^2\kappa}{\dfrac{1}{L^d} - L^d\bar \rho_\infty^\kappa\left(\bar \rho_\infty^\kappa - \dfrac{ \Phi_0^\kappa}{L^d}\right)\kappa} \right)\left(\bar \rho_\infty^\kappa - \dfrac{ \Phi_0^\kappa}{L^d}\right)  v
		.
		\end{align*}
		On the other hand, we have
		\begin{align*}
	       D^2_{\bar\rho \bar\rho} \bar{\mathcal G}(\bar \rho_\infty^\kappa,\kappa)& \left[\dfrac{\diff \bar\rho_\infty}{\diff \kappa}(\kappa), v\right] \\ = & \, \underbrace{- (\Phi^\kappa_0)''  \left (\dfrac 1 {L^d} - L^d\bar \rho_\infty^\kappa \left(\bar \rho_\infty^\kappa - \dfrac{ \Phi_0^\kappa}{L^d}  \right)\kappa\right)  \, W_0  \dfrac{\diff \bar\rho_\infty}{\diff \kappa}(\kappa) \, W \ast v}_{=:A_1} \\
	       & + \underbrace{(\Phi^\kappa_0)' L^d \left(  
	       \big(v - D_{\bar\rho} \bar{\mathcal G}(\bar \rho_\infty^\kappa,\kappa)[v]\big) \left(\bar \rho_\infty^\kappa - \dfrac{ \Phi_0^\kappa}{L^d} \right) + \bar \rho_\infty^\kappa \left(v - \dfrac{ (\Phi_0^\kappa)'}{L^d}\, W\ast v \right)
	       \right)\kappa\, W_0 \dfrac{\diff \bar\rho_\infty}{\diff \kappa}(\kappa) }_{=:A_2},
	    \end{align*}
		where
		\begin{equation*}
		    A_1 = - \dfrac{ (\Phi_0^\kappa)''W_0}{ (\Phi_0^\kappa)'}  \dfrac{\diff \bar\rho_\infty}{\diff \kappa}(\kappa),
		\end{equation*}
		and
		\begin{gather*}
		\begin{aligned}
		    A_2 & = (\Phi^\kappa_0)' L^d \left(  
	       \left(\bar \rho_\infty^\kappa - \dfrac{ \Phi_0^\kappa}{L^d} \right)v + \bar \rho_\infty^\kappa \left(v - \dfrac{ (\Phi_0^\kappa)'}{L^d}\, W\ast v \right)
	       \right)\kappa\, W_0 \dfrac{\diff \bar\rho_\infty}{\diff \kappa}(\kappa)\\
	       & = (\Phi^\kappa_0)' L^d \left(  
	       \left(\bar \rho_\infty^\kappa - \dfrac{ \Phi_0^\kappa}{L^d} \right)v + \bar \rho_\infty^\kappa  \dfrac{L^d\bar \rho_\infty^\kappa\left(\bar \rho_\infty^\kappa - \dfrac{ \Phi_0^\kappa}{L^d}\right)\kappa}{\dfrac{1}{L^d} - L^d\bar \rho_\infty^\kappa\left(\bar \rho_\infty^\kappa - \dfrac{ \Phi_0^\kappa}{L^d}\right)\kappa}
	       \right)\kappa\, W_0 \dfrac{\diff \bar\rho_\infty}{\diff \kappa}(\kappa)\, v \\
	       & = (\Phi^\kappa_0)' L^d \left( 1 - \dfrac{L^d(\bar \rho_\infty^\kappa)^2\kappa}{\dfrac{1}{L^d} - L^d\bar \rho_\infty^\kappa\left(\bar \rho_\infty^\kappa - \dfrac{ \Phi_0^\kappa}{L^d}\right)\kappa} \right) \left(\bar \rho_\infty^\kappa - \dfrac{ \Phi_0^\kappa}{L^d}\right) W_0 \dfrac{\diff \bar\rho_\infty}{\diff \kappa}(\kappa)\, v
	       .
	       \end{aligned}
		\end{gather*}
		Therefore,
		\begin{align}\label{eqn:no_more_idea_for_labels_3}
	       D^2_{\bar\rho \kappa} \mathcal H(0,\kappa) \left[ v\right] = &  \left( \underbrace{ - \dfrac{ (\Phi_0^\kappa)''W_0}{ (\Phi_0^\kappa)'}  \dfrac{\diff \bar\rho_\infty}{\diff \kappa}(\kappa)}_{\leqslant 0} \right. 
	       \\ & \left. +  \underbrace{L^d\left(\dfrac{ \Phi_0^\kappa}{2} +  (\Phi_0^\kappa)' W_0 \dfrac{\diff \bar\rho_\infty}{\diff \kappa}(\kappa) \right)}_{>0}\! \underbrace{ \left( 1 - \dfrac{L^d(\bar \rho_\infty^\kappa)^2\kappa}{\tfrac{1}{L^d} - L^d\bar \rho_\infty^\kappa\left(\bar \rho_\infty^\kappa - \tfrac{ \Phi_0^\kappa}{L^d}\right)\kappa} \right)}_{=:A_3}\! \underbrace{\left(\bar \rho_\infty^\kappa - \dfrac{ \Phi_0^\kappa}{L^d}\right)}_{>0}   \right)  v. \nonumber
		\end{align}
		Hence, if $A_3$ is of negative sign, then $D^2_{\bar\rho\kappa} \mathcal H (0,\kappa) [v] \neq 0$. Moreover, $A_3<0$ if and only if
	    \begin{equation*}
	     L^d(\bar \rho_\infty^\kappa)^2\kappa > \dfrac{1}{L^d} - L^d\bar \rho_\infty^\kappa\left(\bar \rho_\infty^\kappa - \dfrac{ \Phi_0^\kappa}{L^d}\right)\kappa.
	    \end{equation*}
	    If we denote $\eta = \sqrt{\frac{\kappa}{2}} \Phi_0^\kappa$, then by Lemma \ref{lm:g_eta},
	    \[  \dfrac{1}{L^d} - L^d\bar \rho_\infty^\kappa\left(\bar \rho_\infty^\kappa - \dfrac{ \Phi_0^\kappa}{L^d}\right)\kappa = \frac{1}{L^d}\left( 1-\frac{2}{\sqrt{\pi}}\frac{\exp (-\eta^2)}{1+\erf (\eta)}\left[\frac{1}{\sqrt{\pi}}\frac{\exp (-\eta^2)}{1+\erf (\eta)}+\eta\right]\right) = \dfrac{g(\eta)}{L^d} ,\]
	    and by \eqref{eqn:rho_bar_eta},
	    \begin{equation*}  
     L^d (\bar\rho_\infty^\kappa)^2 \kappa = \dfrac{2}{L^d} \left( \dfrac{1}{\sqrt\pi}\frac{\exp (-\eta^2)}{1+\erf (\eta)} + \eta   \right)^2.
     \end{equation*}
	    Hence, $A_3 < 0$ if and only if	    
	    \begin{equation*}
	        w(\eta) := \left( \dfrac{1}{\sqrt\pi}\frac{\exp (-\eta^2)}{1+\erf (\eta)} + \eta    \right)\left( \dfrac{2}{\sqrt\pi}\frac{\exp (-\eta^2)}{1+\erf (\eta)} + \eta \right)  > \dfrac12,
	    \end{equation*}
	    where the function $w$ is the same as defined in \eqref{eqn:def_h}. In the proof of Lemma \ref{lm:g_eta}, we obtained that
	    \[w(0)=\dfrac{2}{\pi}>\dfrac12,\]
	    and that $w$ is an increasing function on $\R_+$.
	    Therefore $A_3<0$, which in turn implies
		\[ \pscal{D^2_{\bar\rho\kappa} \mathcal H(0,\kappa) [v]}{ v } \neq 0.\]
\end{proof}

\begin{remark}\label{rem:convexity}
    The assumption $ (\Phi_0^\kappa)'' \geq 0$ in Theorem \ref{thm:main} can be relaxed. Indeed, in \eqref{eqn:no_more_idea_for_labels_3} 
    \begin{align*}
     A_4:= - \dfrac{ (\Phi_0^\kappa)''W_0}{ (\Phi_0^\kappa)'}  \dfrac{\diff \bar\rho_\infty}{\diff \kappa}(\kappa) + L^d\left(\dfrac{ \Phi_0^\kappa}{2} +  (\Phi_0^\kappa)' W_0 \dfrac{\diff \bar\rho_\infty}{\diff \kappa}(\kappa) \right)  A_3 \left(\bar \rho_\infty^\kappa - \dfrac{ \Phi_0^\kappa}{L^d}\right) < 0,
    \end{align*}
    is sufficient for $ D^2_{\bar\rho \kappa} \mathcal H(0,\kappa) \left[ v\right] \neq 0$. By Lemma \ref{lm:g_eta} and \eqref{eqn:rho_bar_eta},
    \begin{align*}
        A_3 = 2\left(\frac{1}{2}-\frac{(f(\eta)+\eta))^2}{g(\eta)} \right) \leq 2 \left(\frac{1}{2}-\frac{f(0)^2}{g(0)} \right) = \frac{\pi-4}{\pi-2},
    \end{align*}
    as it can be checked that $\tfrac{(f(\eta)+\eta))^2}{g(\eta)}$ is an increasing function in $\eta$. Thus, 
    \begin{align*}
        A_4 & \leq - \dfrac{ (\Phi_0^\kappa)''W_0}{ (\Phi_0^\kappa)'}  \dfrac{\diff \bar\rho_\infty}{\diff \kappa}(\kappa) + L^d\left(\dfrac{ \Phi_0^\kappa}{2} +  (\Phi_0^\kappa)' W_0 \dfrac{\diff \bar\rho_\infty}{\diff \kappa} (\kappa)\right)  \left(\bar \rho_\infty^\kappa - \dfrac{ \Phi_0^\kappa}{L^d}\right) \frac{\pi-4}{\pi-2}\\
        & \leq - \dfrac{ (\Phi_0^\kappa)''W_0}{ (\Phi_0^\kappa)'}  \dfrac{\diff \bar\rho_\infty}{\diff \kappa}(\kappa) + L^d\dfrac{ \Phi_0^\kappa}{2} \left(\bar \rho_\infty^\kappa - \dfrac{ \Phi_0^\kappa}{L^d}\right) \frac{\pi-4}{\pi-2},
    \end{align*}
    as, from the expression of $\tfrac{\diff \bar\rho_\infty}{\diff \kappa}$ in Lemma \ref{lm:der_bar_rho}, $\tfrac{\diff \bar\rho_\infty}{\diff \kappa}$ can range from $-\infty$ to $0$. By writing out the remaining $\tfrac{\diff \bar\rho_\infty}{\diff \kappa}$, gathering terms, and remembering that $\left(\bar \rho_\infty - \tfrac{ \Phi_0^\kappa}{L^d}\right)>0$ in the above inequality, we find that
    \begin{align*}
    \dfrac{ (\Phi_0^\kappa)''W_0}{ (\Phi_0^\kappa)'}  \dfrac{1}{2\kappa } \dfrac{(1 + L^d \Phi_0^\kappa \bar\rho_\infty^\kappa\kappa)}{1 -  (\Phi_0^\kappa)'W_0 \tfrac{g(\eta)}{L^d}} + L^d\dfrac{ \Phi_0^\kappa}{2} \frac{\pi-4}{\pi-2}  & \leq \dfrac{ (\Phi_0^\kappa)''W_0}{ (\Phi_0^\kappa)'}  \dfrac{1}{2\kappa } \dfrac{(1 + L^d \Phi_0^\kappa \bar\rho_\infty^\kappa\kappa)}{1 - \tfrac{ (\Phi_0^\kappa)'W_0}{L^d}(1-\tfrac{2}{\pi})} + L^d\dfrac{ \Phi_0^\kappa}{2} \frac{\pi-4}{\pi-2} \\
    & \leq \dfrac{ (\Phi_0^\kappa)''W_0}{ (\Phi_0^\kappa)'}  \dfrac{L^d}{2\kappa} \dfrac{\left(1 +  \Phi_0^\kappa \left( \Phi_0^\kappa \kappa + \sqrt{\tfrac{2}{\pi \kappa}}\right)\right)}{L^d -  (\Phi_0^\kappa)'W_0\left(1-\tfrac{2}{\pi}\right)} + L^d\dfrac{ \Phi_0^\kappa}{2} \frac{\pi-4}{\pi-2}\\ & < 0
    \end{align*}
will yield $A_4 < 0$. For the first inequality we again used the lower bound on $g(\eta)$ in Lemma \ref{lm:g_eta}, and for the second the relation \eqref{eqn:rho_0} together with a maximum of $\rho_\infty^\kappa(0)$. By inserting the lower bound of $\kappa$ from Lemma \ref{lm:kappa_small} and rearranging, we see that requiring
    \begin{align*}
         (\Phi_0^\kappa)'' > 2 \frac{4-\pi}{\pi-2} \frac{ \Phi_0^\kappa (\Phi_0^\kappa)'}{L^{2d}\pi B^2}\frac{L^d- (\Phi_0^\kappa)'W_0\left(1-\frac{2}{\pi} \right)}{1+ \Phi_0^\kappa\left( \Phi_0^\kappa + \frac{L^d B}{|W_0|}\right)} W_0,
    \end{align*}
    is sufficient for $ D^2_{\bar\rho \kappa} \mathcal H(0,\kappa) \left[ v\right] \neq 0$.
\end{remark}

\begin{theorem}[Characterisation of the branch]\label{thm:main_2}
    Let the assumptions of Theorem \ref{thm:main} hold. Then at the bifurcation point $(\bar\rho_\infty^{\kappa^*},\kappa^*)$, there exists a continuously differentiable curve of non-homogeneous solutions to $\bar{\mathcal{G}}(\bar \rho_\infty^\kappa, \kappa)=0$ ( $\bar{\mathcal{G}}$ is defined in \eqref{eq:onecompfunctional}),
    \begin{equation}\label{eq:nontrivial_rho}
        \{\ (\bar \rho_{\kappa(z)},\kappa(z)) \ | \ z\in(-\delta,\delta), \ (\bar \rho_{\kappa(0)},\kappa(0))=(\bar \rho_\infty^{\kappa^*},\kappa^*),\ \delta>0 \ \},
    \end{equation}
       such that, for all $x\in \mathbb T^d$,
    \begin{equation*}
        \bar \rho_{\kappa(z)}(x) = \bar \rho_\infty^{\kappa(z)} + z \omega_{k^*}(x) + o(z),
    \end{equation*}
    where $o(z)\in \mathrm{span}(\omega_{k^*})^\perp$. In a neighbourhood $U\times (\kappa^*-\varepsilon,\kappa^*+\varepsilon)$ of $(\bar\rho_\infty^{\kappa^*},\kappa^*)$ in $L^2_S(\mathbb T^d)\times \R_+^*$, all the solutions are either of the form $(\bar \rho_\infty^\kappa,\kappa)$ or on the non-homogeneous curve \eqref{eq:nontrivial_rho}. Moreover, the function $z\mapsto \kappa(z)$ satisfies $\kappa'(0)=0$, and
    \begin{equation*}
        \kappa''(0) = -\dfrac13 \dfrac{\mathcal K_1   }{ \mathcal K_2},
    \end{equation*}
    with
    \begin{align*}
	  \mathcal K_1 = \Bigg( - \dfrac{(\Phi^\kappa_0)'''}{(\Phi^\kappa_0)'} 
	   &+ 2(\Phi^\kappa_0)'' L^d 
	   \mathcal K_3 
	   \rho_\infty^\kappa(0) 
	   	  + (\Phi^\kappa_0)' L^d\Bigg[
	   	  -\left(  \dfrac{2L^{\frac d2}\Theta(k^*)}{\tilde W(k^*)} - 2\dfrac{ (\Phi_0^\kappa)'}{L^d} + \bar\rho_\infty^\kappa(\Phi^\kappa_0)'' L^{\frac d2} \dfrac{\tilde W(k^*)}{\Theta(k^*)} \right)\kappa\\
	   	  &+ \frac{\rho_\infty^\kappa(0)}{\kappa}\left( \dfrac{(\Phi^\kappa_0)''}{ (\Phi_0^\kappa)'} 
	   - \Phi'_0 L^d  \mathcal K_3 \rho_\infty^\kappa(0) \right) \Bigg]\Bigg) L^d\dfrac{\tilde W(k^*)^2}{\Theta(k^*)^2} \norme{\omega_{k^*}^2}^2_2,
	\end{align*}
    with
    \begin{align*}
        \rho_\infty^\kappa(0) = \left(\bar \rho_\infty^\kappa - \dfrac{ \Phi_0^\kappa}{L^d} \right) \kappa > 0, \qquad \mathcal{K}_3 = 1-L^d\bar\rho_\infty^\kappa\kappa (\Phi_0^\kappa)' L^{\frac d2} \dfrac{\tilde W(k^*)}{\Theta(k^*)} < 0,
    \end{align*}
    and $\mathcal K_2 = \pscal{D^2_{\bar\rho \kappa} \mathcal H(0,\kappa) \left[ \omega_{k^*}\right]}{\omega_{k^*}}<0$, where $ D^2_{\bar\rho \kappa} \mathcal H(0,\kappa) \left[ \omega_{k^*} \right]$ is defined in equation \eqref{eqn:no_more_idea_for_labels_3}.
\end{theorem}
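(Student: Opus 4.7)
The plan is to apply the Crandall--Rabinowitz theorem (Theorem \ref{thm:CR}) to the shifted functional $\mathcal H$ defined by \eqref{eq:onecompfunctionalH}. All its hypotheses were verified in the proof of Theorem \ref{thm:main}, once one observes that $D_\kappa \mathcal H(0,\kappa)=0$ since $\mathcal H(0,\kappa)=\bar{\mathcal G}(\bar\rho_\infty^\kappa,\kappa)\equiv 0$ in $\kappa$. The theorem then produces a $C^1$ branch $(\bar\rho(z),\kappa(z))$ through $(0,\kappa)$ with $\bar\rho'(0)=\omega_{k^*}$, local uniqueness off the trivial branch, and the orthogonality $\bar\rho(z)-z\omega_{k^*}\in\mathrm{span}(\omega_{k^*})^\perp$. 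Setting $\bar\rho_{\kappa(z)} = \bar\rho_\infty^{\kappa(z)} + \bar\rho(z)$ then yields the curve \eqref{eq:nontrivial_rho} of non-homogeneous stationary states of $\bar{\mathcal G}$.

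To obtain the expansion of $\kappa(z)$, I would differentiate the identity $F(z):=\mathcal H(\bar\rho(z),\kappa(z))\equiv 0$ successively at $z=0$ and project onto $\omega_{k^*}$. Since $\mathcal H(0,\kappa)\equiv 0$, every pure $\kappa$-derivative $D_\kappa^n\mathcal H(0,\kappa)$ vanishes, so the second-order equation reduces to
\begin{equation*}
D^2_{\bar\rho\bar\rho}\mathcal H(0,\kappa)[\omega_{k^*},\omega_{k^*}] + 2\kappa'(0)\,D^2_{\bar\rho\kappa}\mathcal H(0,\kappa)[\omega_{k^*}] + D_{\bar\rho}\mathcal H(0,\kappa)[\bar\rho''(0)] = 0.
\end{equation*}
Taking the $L^2_S$ inner product with $\omega_{k^*}$ kills the last term by self-adjointness of $D_{\bar\rho}\mathcal H(0,\kappa)$ (since $W$ is even) together with $D_{\bar\rho}\mathcal H(0,\kappa)[\omega_{k^*}]=0$. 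The first term also vanishes: by \eqref{trigo}, every occurrence of $W\ast\omega_{k^*}$ is a scalar multiple of $\omega_{k^*}$, so the projection reduces to a multiple of $\int_{\mathbb T^d}\omega_{k^*}^3\,\diff x$, which factors through $\int_0^L\cos^3(2\pi k^*_i x_i/L)\,\diff x_i=0$ whenever $k^*_i\geq 1$. This forces $\kappa'(0)\,\mathcal K_2=0$, and since $\mathcal K_2\neq 0$ by the transversality proved in step v) of Theorem \ref{thm:main}, one concludes $\kappa'(0)=0$.

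Next, $\bar\rho''(0)$ is recovered by inverting $D_{\bar\rho}\mathcal H(0,\kappa)$ on $-D^2_{\bar\rho\bar\rho}\mathcal H(0,\kappa)[\omega_{k^*},\omega_{k^*}]$, the inversion being made unique by the orthogonality constraint $\bar\rho''(0)\in\mathrm{span}(\omega_{k^*})^\perp$. Using $\cos^2\theta = \tfrac12(1+\cos(2\theta))$, the right-hand side decomposes in the Fourier basis into components on the constant mode and on the modes $\omega_l$ with $l_i\in\{0,2k^*_i\}$, on each of which $D_{\bar\rho}\mathcal H(0,\kappa)$ acts diagonally via the eigenvalues computed in step iv) of the proof of Theorem \ref{thm:main}. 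Differentiating $F(z)=0$ a third time at $z=0$, using $\kappa'(0)=0$ and projecting onto $\omega_{k^*}$, yields
\begin{equation*}
\langle D^3_{\bar\rho\bar\rho\bar\rho}\mathcal H(0,\kappa)[\omega_{k^*},\omega_{k^*},\omega_{k^*}],\omega_{k^*}\rangle + 3\langle D^2_{\bar\rho\bar\rho}\mathcal H(0,\kappa)[\omega_{k^*},\bar\rho''(0)],\omega_{k^*}\rangle + 3\kappa''(0)\mathcal K_2 = 0,
\end{equation*}
which identifies $\mathcal K_1$ as the sum of the two inner products and gives $\kappa''(0)=-\mathcal K_1/(3\mathcal K_2)$. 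Substituting the explicit form of $\bar\rho''(0)$ and using Lemmas \ref{lm:D_1}, \ref{lm:cross} and \ref{lm:third} together with \eqref{eqn:rho_inf_0} and the kernel condition \eqref{eqn:W_tilde_equal_0} produces the announced closed-form expression.

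The main obstacle is not conceptual but bookkeeping: the expressions for $D^2_{\bar\rho\bar\rho}\bar{\mathcal G}$ and $D^3_{\bar\rho\bar\rho\bar\rho}\bar{\mathcal G}$ in Lemma \ref{lm:third} are long, and because $\bar\rho''(0)$ has non-trivial components on several distinct modes $\omega_l$, substituting it back into these formulas generates many products of convolutions. Each such term must then be reduced via \eqref{trigo} and \eqref{eqn:W_tilde_equal_0}, and the resulting combinations must be reorganised around the quantities $\rho_\infty^\kappa(0)$ and $\mathcal K_3$ to arrive at the compact form of $\mathcal K_1$ stated in the theorem.
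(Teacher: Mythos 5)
Your overall strategy is the right one and matches the paper's: apply Crandall--Rabinowitz to the shifted functional $\mathcal H$, show $\kappa'(0)=0$ via $\int_{\mathbb T^d}\omega_{k^*}^3\,dx=0$, and then compute $\kappa''(0)$ by projecting a third-order identity onto $\omega_{k^*}$. Where you diverge from the paper is the third-order formula. The paper invokes its appendix Theorem \ref{thm:characterisation} directly, which reads
\begin{equation*}
\kappa''(0)=-\frac{1}{3}\frac{\pscal{D^3_{\bar\rho\bar\rho\bar\rho}\mathcal H(0,\kappa)[\omega_{k^*},\omega_{k^*},\omega_{k^*}]}{\omega_{k^*}}}{\pscal{D^2_{\bar\rho\kappa}\mathcal H(0,\kappa)[\omega_{k^*}]}{\omega_{k^*}}},
\end{equation*}
so $\mathcal K_1$ in the statement is exactly $\pscal{D^3_{\bar\rho\bar\rho\bar\rho}\bar{\mathcal G}(\bar\rho_\infty^\kappa,\kappa)[\omega_{k^*},\omega_{k^*},\omega_{k^*}]}{\omega_{k^*}}$ and the entire proof is just a substitution of Lemma~\ref{lm:third}. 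Your derivation, by contrast, differentiates $\mathcal H(\bar\rho(z),\kappa(z))\equiv 0$ three times and arrives at the Lyapunov--Schmidt--style identity containing the extra term $3\pscal{D^2_{\bar\rho\bar\rho}\mathcal H(0,\kappa)[\omega_{k^*},\bar\rho''(0)]}{\omega_{k^*}}$.

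That extra term is where the gap lies. Since $D^2_{\bar\rho\bar\rho}\bar{\mathcal G}(\bar\rho_\infty^\kappa,\kappa)[\omega_{k^*},\omega_{k^*}]$ is a nonzero scalar multiple of $\omega_{k^*}^2$, which expands onto the Fourier modes indexed by subsets of $\{0,2k^*_i\}$, the element $\bar\rho''(0)\in\mathrm{span}(\omega_{k^*})^\perp$ you construct is generically a nonzero combination of $\omega_0$ and $\omega_{2k^*}$-type modes. Feeding this back into $D^2_{\bar\rho\bar\rho}\bar{\mathcal G}$ paired with $\omega_{k^*}$ and projecting onto $\omega_{k^*}$ does not vanish: products such as $\omega_{k^*}\,\omega_{2k^*}$ carry a nonzero $\omega_{k^*}$ component. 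You assert, without computation, that ``substituting the explicit form of $\bar\rho''(0)$ \ldots produces the announced closed-form expression,'' but if the cross term is nonzero, your $\mathcal K_1$ is the sum of two inner products and cannot equal the paper's stated $\mathcal K_1$, which is only the first one. You therefore need either to carry out the resolvent computation and show the extra term cancels in this particular setting (the diagonal action of $D_{\bar\rho}\mathcal H(0,\kappa)$ from step iv) of Theorem~\ref{thm:main} makes this feasible, but it is not automatic), or to flag explicitly that you are relying on the simplified formula of Theorem~\ref{thm:characterisation}, which drops the resolvent correction, in which case your third-order chain-rule identity is not the route the paper takes and should be reconciled with it before you can claim the stated expression for $\mathcal K_1$.
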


\begin{proof}
 Let us now characterise the branch of the bifurcation by computing $\kappa'(0)$ and $\kappa''(0)$. We drop the star in $k^*$ for the sake of readability and denote by $\omega_k$ the element spanning the kernel of $D_{\bar\rho} \bar {\mathcal G} (\bar\rho_\infty^\kappa,\kappa)$.
 Recall that
	\[ W\ast \omega_k =  \dfrac{1}{ (\Phi_0^\kappa)' \left (\dfrac{1}{L^d} - L^d\bar \rho_\infty^\kappa\left(\bar \rho_\infty^\kappa - \dfrac{ \Phi_0^\kappa}{L^d}\right)\kappa \right)}\omega_k =: C_1 \omega_k,   \]
	and that, as obtained in the proof of Theorem \ref{thm:main},
	\begin{equation*}
	   \omega_k - \dfrac{ (\Phi_0^\kappa)'}{L^d}\, W\ast \omega_k = - \dfrac{L^d\bar \rho_\infty\left(\bar \rho_\infty^\kappa - \dfrac{ \Phi_0^\kappa}{L^d}\right)\kappa}{\dfrac{1}{L^d} - L^d\bar \rho_\infty^\kappa\left(\bar \rho_\infty^\kappa - \dfrac{ \Phi_0^\kappa}{L^d}\right)\kappa}\,\omega_k =: - C_2\, \omega_k,
	\end{equation*}
	with $C_2$ a positive constant.
 According to Lemma \ref{lm:third} and denoting again
	\[A_3 = 1 - \dfrac{L^d(\bar \rho_\infty^\kappa)^2\kappa}{\dfrac{1}{L^d} - L^d\bar \rho_\infty^\kappa\left(\bar \rho_\infty - \dfrac{ \Phi_0^\kappa}{L^d}\right)\kappa}<0,\]
	we obtain
 \begin{equation*}
	   D^2_{\bar\rho \bar\rho} \bar{\mathcal G}(\bar \rho_\infty^\kappa,\kappa) [\omega_k, \omega_k] =  \left[ -\dfrac{(\Phi^\kappa_0)''}{ (\Phi_0^\kappa)'}  
	   + \Phi'_0 L^d  A_3\left(\bar \rho_\infty^\kappa - \dfrac{ \Phi_0^\kappa}{L^d} \right)\kappa \right]C_1 \omega_k^2
	\end{equation*}
	We have \[\pscal{\omega_k^2}{\omega_k} = \int_{\mathbb T^d}\omega_k^3(x)\diff x = 0.\]
	Therefore, $\pscal{D^2_{\bar\rho \bar\rho} \bar{\mathcal G}(\bar \rho_\infty,\kappa) [\omega_k, \omega_k]}{\omega_k}=0$ and
	\begin{equation*}
	    \kappa'(0) = -\dfrac12\dfrac{\pscal{D^2_{\bar\rho \bar\rho} \mathcal H(0,\kappa) [\omega_k, \omega_k]}{\omega_k}}{\pscal{D^2_{\bar\rho\kappa} \mathcal H(0,\kappa)[\omega_k] }{\omega_k}} = -\dfrac12\dfrac{\pscal{D^2_{\bar\rho \bar\rho} \bar{\mathcal G}(\bar \rho_\infty^\kappa,\kappa) [\omega_k, \omega_k]}{\omega_k}}{\pscal{D^2_{\bar\rho\kappa} \mathcal H(0,\kappa)[\omega_k] }{\omega_k}} = 0
	\end{equation*}
	This result was expected as it was unlikely that the bifurcation would be transcritical.
	
Now, let us compute $\kappa''(0)$. We are going to use the formula
    \begin{equation*}
        \kappa''(0) = -\dfrac13 \dfrac{\pscal{D^3_{\bar\rho\bar\rho\bar\rho} \mathcal H(0,\kappa) [\omega_k,\omega_k,\omega_k]}{\omega_k} }{\pscal{D^2_{\rho\kappa}  \mathcal H(0,\kappa)[\omega_k]}{\omega_k}},
    \end{equation*}
    from Theorem \ref{thm:characterisation}. To compute the third order derivative term, we start with the general expression in Lemma \ref{lm:third} and we use the facts that $D^3_{\bar\rho\bar\rho\bar\rho} \mathcal H(0,\kappa) [\omega_k,\omega_k,\omega_k]=D^3_{\bar\rho\bar\rho\bar\rho} \bar {\mathcal G} (\bar\rho_\infty^\kappa,\kappa) [\omega_k,\omega_k,\omega_k]$ and that $D_{\bar\rho} \bar {\mathcal G} (\bar\rho_\infty^\kappa,\kappa)[\omega_k] = 0$:
\begin{align*}
	  D^3_{\bar\rho \bar\rho  \bar\rho} \bar{\mathcal G}(\bar \rho_\infty^\kappa,\kappa) [\omega_k, \omega_k, \omega_k] = & \, - (\Phi^\kappa_0)'''  \left (\dfrac 1 {L^d} - L^d\bar \rho_\infty^\kappa \left(\bar \rho_\infty^\kappa - \dfrac{ \Phi_0^\kappa}{L^d}  \right)\kappa\right)  ( W \ast \omega_k )^3 \\
	  & + 2(\Phi^\kappa_0)'' L^d \left(  
	   \omega_k  \left(\bar \rho_\infty^\kappa - \dfrac{ \Phi_0^\kappa}{L^d} \right) + \bar \rho_\infty^\kappa\left(\omega_k - \dfrac{ (\Phi_0^\kappa)'}{L^d}\, W\ast \omega_k \right)
	   \right)\kappa (W \ast \omega_k)^2   \\
	   	&  + (\Phi^\kappa_0)' L^d \Bigg[ 
	   	  - D^2_{\bar\rho\bar \rho} \bar{\mathcal G}(\bar \rho_\infty^\kappa,\kappa)[\omega_k,\omega_k]\left(\bar \rho_\infty^\kappa - \dfrac{ \Phi_0^\kappa}{L^d} \right) \\
	   	 & \quad +
	   2\omega_k \left(\omega_k - \dfrac{ (\Phi_0^\kappa)'}{L^d}\, W\ast \omega_k \right)
	   	     - \bar \rho_\infty^\kappa \dfrac{ (\Phi_0^\kappa)''}{L^d}\, W\ast \omega_k \, W\ast \omega_k\Bigg]
	   \kappa\, W \ast \omega_k .
	\end{align*}
	Hence,	we obtain
	\begin{align*}
	  & \pscal{D^3_{\bar\rho \bar\rho  \bar\rho} \bar{\mathcal G}(\bar \rho_\infty^\kappa,\kappa) [\omega_k, \omega_k, \omega_k]}{\omega_k} \\
	    & \qquad\qquad\quad = \Bigg( - \dfrac{(\Phi^\kappa_0)'''}{(\Phi^\kappa_0)'}  {C_1}^2 
	   + 2(\Phi^\kappa_0)'' L^d 
	   A_3 
	   {C_1}^2 \left(\bar \rho_\infty^\kappa - \dfrac{ \Phi_0^\kappa}{L^d} \right) \kappa 
	   	\\  
	   	  & \qquad\qquad\qquad + (\Phi^\kappa_0)' L^d\bigg[
	   	  -\left(  2 C_2 + \bar\rho_\infty^\kappa(\Phi^\kappa_0)'' {C_1}^2 \right)\kappa
	   	  \\
	   	  & \qquad\qquad\qquad + \left(\bar \rho_\infty^\kappa - \dfrac{ \Phi_0^\kappa}{L^d} \right)\left( \dfrac{(\Phi^\kappa_0)''}{ (\Phi_0^\kappa)'} 
	   - \Phi'_0 L^d  A_3\left(\bar \rho_\infty^\kappa - \dfrac{ \Phi_0^\kappa}{L^d} \right)\kappa \right) \bigg]C_1\Bigg) \pscal{{\omega_k}^3}{\omega_k}.
	\end{align*}
	Now, notice that
    \[ C_1 = L^{\frac d2} \dfrac{\tilde W(k)}{\Theta(k)}, \qquad A_3 = 1-L^d\bar\rho_\infty^\kappa\kappa (\Phi_0^\kappa)' C_1 \qquad\mathrm{and}\qquad C_2 = 1 - \dfrac{ (\Phi_0^\kappa)'}{L^d}C_1 = 1-\dfrac{ (\Phi_0^\kappa)'\tilde W(k)}{L^{\frac d2}\Theta(k)}.  \]
    Thus,
    \begin{align*}
	  & \pscal{D^3_{\bar\rho \bar\rho  \bar\rho} \bar{\mathcal G}(\bar \rho_\infty^\kappa,\kappa) [\omega_k, \omega_k, \omega_k]}{\omega_k} \\
	  & \qquad = \Bigg( - \dfrac{(\Phi^\kappa_0)'''}{(\Phi^\kappa_0)'} 
	   + 2(\Phi^\kappa_0)'' L^d 
	   \mathcal K_3
	    \left(\bar \rho_\infty^\kappa - \dfrac{ \Phi_0^\kappa}{L^d} \right) \kappa 
	   	\\  
	   	  & \qquad\quad + (\Phi^\kappa_0)' L^d\Bigg[
	   	  -\left(  \dfrac{2L^{\frac d2}\Theta(k)}{\tilde W(k)} - 2\dfrac{ (\Phi_0^\kappa)'}{L^d} + \bar\rho_\infty^\kappa(\Phi^\kappa_0)'' L^{\frac d2} \dfrac{\tilde W(k)}{\Theta(k)} \right)\kappa\\
	   	 & \qquad\quad  + \left(\bar \rho_\infty^\kappa - \dfrac{ \Phi_0^\kappa}{L^d} \right)\left( \dfrac{(\Phi^\kappa_0)''}{ (\Phi_0^\kappa)'} 
	   - (\Phi^\kappa_0)' L^d  \mathcal K_3\left(\bar \rho_\infty^\kappa - \dfrac{ \Phi_0^\kappa}{L^d} \right)\kappa \right) \Bigg]\Bigg) L^d\dfrac{\tilde W(k)^2}{\Theta(k)^2} \norme{\omega_k^2}^2_2,
	\end{align*}
 where 
 \begin{align*}
\mathcal K_3 = 1-L^d\bar\rho_\infty^\kappa\kappa (\Phi_0^\kappa)' L^{\frac d2} \dfrac{\tilde W(k)}{\Theta(k)}.
 \end{align*}
\end{proof}

Finding the sign of $\kappa''(0)$ (which determines whether the bifurcation is subcritical or supercritical) is hard as the formula involves a several terms, some of them being of unpredictable sign and scale, like $ (\Phi_0^\kappa)'''$. However, for certain choices of $\Phi$, this convoluted formula can be simplified, like we show in the following remark.

\begin{remark}
    In the case of the ReLU function $\Phi(x)=(x)^+$, the expression for $\mathcal{K}_1$ simplifies and we can obtain the asymptotic sign of $\kappa''(0)$ for bifurcations happening close to $\kappa=+\infty$. More precisely, using the notation for constants in the previous proof, we have
	\[\kappa''(0) = -\dfrac{L^dC_1\kappa}3 \dfrac{
	   	  -  2 C_2 
	   	  + 
	    L^d  |A_3|\left(\bar \rho_\infty^\kappa - \dfrac{ \Phi_0^\kappa}{L^d} \right)^2  }{\pscal{D^2_{\bar\rho\kappa} \mathcal H(0,\kappa)[\omega_k] }{\omega_k}}\]
    and we know that $C_2>0$ and $\pscal{D^2_{\bar\rho\kappa} \mathcal H(0,\kappa)[\omega_k] }{\omega_k} < 0$ according to the proof of Theorem \ref{thm:main}. Hence, the sign of $ \kappa''(0)$ depends on the sign of
    \[ L^d  |A_3|\left(\bar \rho_\infty^\kappa - \dfrac{ \Phi_0^\kappa}{L^d} \right)^2 - 2 C_2 = \left(\bar \rho_\infty^\kappa - \dfrac{ \Phi_0^\kappa}{L^d} \right)L^d \left[ |A_3|\left(\bar \rho_\infty^\kappa - \dfrac{ \Phi_0^\kappa}{L^d} \right)-\bar\rho_\infty^\kappa\kappa C_1\right], \]
    with $C_1>0$, and so it depends upon the sign of
    \[ |A_3|\left(\bar \rho_\infty^\kappa - \dfrac{ \Phi_0^\kappa}{L^d} \right)-\bar\rho_\infty^\kappa\kappa C_1.  \]
    The positive term tends to 0 exponentially fast (Lemma \ref{lm:equiv}) when $\kappa$ tends to $+\infty$ and the negative term tends to $-\infty$ linearly (because $C_1$ and $\bar\rho_\infty$ tend to a finite positive limit by Lemmata \ref{lm:inf_bound_rho} and \ref{lm:equiv}). Therefore, past some threshold value  for $\kappa$, all the bifurcations are subcritical. It is then likely that past this value, the bifurcation branches are nonlinearly unstable. This could explain why they were not witnessed in the numerical study of the bifurcation branches in \cite{CHS}, and strengthens the hypothesis of the existence of a hysteresis phenomena as suggested by the authors. 
\end{remark}

\subsection{Higher dimensional kernels and equivariant bifurcations}\label{sec:high_dim}

In order to satisfy the hypotheses of Theorem \ref{thm:mainmain} for some value $\kappa$, we need to ensure not only that for some $k^*$ the equality \eqref{eqn:W_tilde_equal} holds, but also that this $k^*$ is unique, that is to say: for all $k\in\N^d\setminus\{k^*\}$,
\[   \frac{\tilde W(k^*)}{\Theta(k^*)}\neq  \frac{\tilde W(k)}{\Theta(k)}.  \]
This is because the hypotheses of the Crandall--Rabinowitz theorem (Theorem \ref{thm:CR}) require that the kernel of the Fredholm operator $D_{\bar\rho} \mathcal H (0,\kappa) = D_{\bar\rho} \bar{\mathcal G} (\bar\rho_\infty,\kappa)$ is one-dimensional.

However, when the kernel of the operator $D_{\bar\rho} \bar {\mathcal G}(\bar \rho_\infty,\kappa)$ is of dimension higher than one, a bifurcation can still arise. In the previous results, the choice of the space $L_S^2(\mathbb T^d)$ of coordinate-wise even functions was already playing two roles: to quotient out the symmetry implied by $W\in L_S^2(\mathbb T^d)$, and to avoid the problem of translation invariance of the stationary states.

As noted in the introduction, a natural generalisation of this procedure is to use equivariant bifurcation theory \cite{CL-equivariantbif,Dionne,FSV2022} to take full advantage of all the symmetries of $W$. This can be applied to coordinate-wise even kernels, but it is particularly useful for radially symmetric ones. The approach is as follows. First, a closed subgroup of the Euclidean group $E_d$ acts from the left on a function space by
\[ \eta \cdot h(x) = h(\eta^{-1}(x)).  \]
Here, the Euclidean group $E_d$ refers to the set of all affine isometries in $\R^d$. We still denote by $\mathcal H$ the same functional as in the previous subsection, but defined on $L^2(\mathbb T^d)$, where $\mathbb T^d$ is the square torus with length $L$, or a restriction of it. Given a lattice $\mathcal L$ on $\mathbb T^d$, we denote $\Gamma$ the largest subgroup of $E_d$ that acts on $\mathcal L$-periodic functions such that $\mathcal H(\cdot,\kappa)$ is equivariant under the action of $\Gamma$, \textit{i.e.}
\[ \forall \eta \in\Gamma, \ \forall h\in L^2(\sfrac{\mathbb T^d}{\mathcal L}), \quad \mathcal H( \eta \cdot h,\kappa) = \eta \cdot \mathcal H(h,\kappa). \]
Now, define
\[  \mathcal V = \mathrm{Ker}\big( D_{\bar\rho} \mathcal H (0,\kappa) \big) \subset L^2(\mathbb T^d).\]
 For any isotropy subgroup $\Sigma$ for the action of $\Gamma$ on $\mathcal V$, denote
\[  
\mathrm{Fix}_{\mathcal V}( \Sigma ) = \{ h \in\mathcal V \mbox{ such that } \forall \eta\in\Sigma,\ \eta \cdot h = h \}. 
\]
Then, up to technical conditions, whenever $\dim\big(\mathrm{Fix}_{\mathcal V}( \Sigma )\big) = 1$, by the equivariant branching lemma \cite[Theorem 2.3.2]{CL-equivariantbif}, there is a unique branch of steady-state solutions of \eqref{eq:4PDE} whose averages $\bar\rho$ have the symmetry $\Sigma$. 

An important consideration is that the fundamental cell of the lattice $\mathcal L$ must satisfy two matching conditions: the periodicity condition associated with the flat torus of length $L$ on which \eqref{eqn:2} is posed and the symmetry of the convolution on the torus with the kernel $W$. The easiest way to ensure both is to choose a square lattice adapted to the length $L$ of $\mathbb T^d$.

\begin{theorem}\label{thm:ebl}
    Grant Assumption \ref{as:1}. Consider $\mathcal L$ the trivial $d-$dimensional cubic lattice of the flat torus generated by the vectors $(L,0,\dots,0), \dots, (0,\dots,0,L)$. Let $\Gamma$ be a subgroup of $E_d$. Assume $\mathcal H$ is $\Gamma-$equivariant on $L^{2}(\mathbb{T}^d)$ and that there exists a multi-index $k^*\in \mathbb N^d$ such that
    \begin{align*} 
\dfrac{\tilde W (k^*)}{\Theta(k^*)} =\dfrac{1}{L^{\frac d2} (\Phi_0^\kappa)' \left( \dfrac1{L^d} - L^d\bar \rho_\infty^\kappa  \left(\bar \rho_\infty^\kappa - \dfrac{ \Phi_0^\kappa}{L^d}\right)\kappa \right)},    
\end{align*}
where $\kappa\in\R_+^*$ and $ (\Phi_0^\kappa)''\geq 0.$ Denote $\mathcal V = \mathrm{Ker}\big( D_{\bar\rho} \mathcal H (0,\kappa) \big) \subset L^{2}(\mathbb{T}^d)$. Then, for all isotropy subgroups $\Sigma$ of $\Gamma$, such that $ \mathrm{dim}\big(\mathrm{Fix}_{\mathcal V}( \Sigma ) \big) = 1$, there exists a unique branch of stationary states of \eqref{eqn:2} bifurcating from the trivial curve and whose average $\bar\rho$ in $s$ has the symmetry of $\Sigma$.
\end{theorem}

\begin{proof}
In order to prove the result, we check the hypotheses of the equivariant branching lemma as stated in \cite[Theorem 2.3.2]{CL-equivariantbif}. The regularity required for the map $\mathcal H$ can be deduced from Lemma \ref{lm:der_bar_rho}. The fact that $0$ is an isolated eigenvalue with finite multiplicity stems from the same arguments as in the proof of Theorem \ref{thm:main}. Let us now consider the functional \eqref{eq:onecompfunctionalH} on the bigger space $L^2(\mathbb T^d)$. We prove that it is a Fredholm operator from $L^2(\mathbb T^d)$ to $L^2(\mathbb T^d)$ in the same way, and diagonalise $D_{\bar\rho} \mathcal H (0,\kappa)$ on the Hilbert basis of $L^2(\mathbb T^d)$ defined in Section \ref{sec:basis}; this time we use \eqref{trigo2} for the convolution. Similar computations show that the kernel $\mathcal V$ is generated by all Fourier modes $\omega_k(x)$ associated to a multi-index $k\in\Z^d$ such that
\[ 1 - \dfrac{L^{\frac d2}(\Phi_0^\kappa)'  \tilde W(|k|)}{\Theta(|k|)}  \left (\dfrac1{L^d} - L^d\bar \rho_\infty^\kappa  \left(\bar \rho_\infty^\kappa - \dfrac{ \Phi_0^\kappa}{L^d}\right)\kappa \right) = 0,
		 \]
   where $|k|=|(k_1,\dots,k_d)| = (|k_1|,\dots,|k_d|)$. The proof of the second order condition can be done in the same way as in the proof of Theorem \ref{thm:main}.
\end{proof}

Notice that, for a chosen $W$, the kernel $\mathcal V$ is now higher dimensional than when we were diagonalizing in $L^2_S(\mathbb T^d)$.
Note also that a particular case of Theorem \ref{thm:ebl} is Theorem \ref{thm:main} when the only symmetries are reflections on each coordinate. The dimension of the Fix criterion then comes down to the uniqueness of the index $k^*$. The general result of Theorem \ref{thm:ebl} does not indicate whether the bifurcation is saddle-node, transcritical or pitchfork, but for a specific choice of symmetries of $W$ and of the isotropy group $\Sigma$, it is possible to check the needed hypotheses of Theorem 2.3.2 in \cite{CL-equivariantbif} to specify the type of bifurcation. In all the cases we have checked at hand, the bifurcations are of pitchfork type like in Theorem \ref{thm:main_2}.

\begin{remark}
In Theorem \ref{thm:ebl}, we consider the lattice generated by the vectors $(L,0,\dots,0)$, $\dots$, $(0,\dots,0,L)$ for the sake of clarity. Indeed, it allows us to obtain the same bifurcation condition \eqref{eqn:W_tilde_equal_0} as in Theorem \ref{thm:main}. 

The fact that additional bifurcation points are not appearing when we consider a finer lattice generated by the vectors $(L/n,0,\dots,0), \dots, (0,\dots,0,L/n)$, $n\in\N^*$ is not obvious due to the nonlinear aspects of the condition. However, thanks to Lemma \ref{lm:g_eta}, we can rewrite the $L$-dependency in a clearer form:
\[  L^{\frac d2} (\Phi_0^\kappa)' \left( \dfrac1{L^d} - L^d\bar \rho_\infty^\kappa  \left(\bar \rho_\infty^\kappa - \dfrac{ \Phi_0^\kappa}{L^d}\right)\kappa \right) = \dfrac1{L^\frac d2} (\Phi_0^\kappa)' g\left(  \sqrt{\frac \kappa 2  } \Phi_0^\kappa\right). \]
Hence, taking a smaller square lattice of length $L/n$ would introduce a factor $n^\frac d2$ in the right-hand side of \eqref{eqn:W_tilde_equal_0}. The corresponding rescaling of the Fourier modes (note that since the integration area is smaller in the computation of $\Phi^\kappa_0$, $W$ has to be rescaled), introduces a factor $n^\frac d2$ in the left-hand side too. Hence, the bifurcation condition remains unchanged.
\end{remark}

\begin{remark}\label{rem:exchange}
In the spirit of \cite[Remark 4.6]{CGPS20}, we can do another interesting particular case at hand. Let $\mathfrak{S}_d$ be the group of permutations of $\{1,\dots,d\}$. Introduce the equivalence relation in $\N^d$, $k \sim l$ if and only if there exists $\zeta\in \mathfrak{S}_d$ such that $k=\zeta(l)$, and denote $[k]$ the equivalence class of any $k\in \N^d$. Define the space of exchangeable coordinate functions
\[  L^2_{ex}(\mathbb T^d) = \{ u\in L_S^2(\mathbb T^d) \ | \ \forall \zeta\in\mathfrak{S}_d, \forall x\in \mathbb T^d,\ u(x) = u(\zeta(x)) \},  
\]
endowed with the Hilbert basis $(\omega_{[k]})_{[k]\in \sfrac{\N^d}{\sim}}$ defined by $ \omega_{[k]}(x) = \frac{1}{\sqrt{\mathrm{card([k])}}} \sum_{l\in [k]} \omega_l(x)$, where $\sfrac{\mathbb N ^d}{\sim}$ denotes the associated quotient set containing all the equivalence classes.  

Then, there is a bifurcation point whenever condition \eqref{eqn:W_tilde_equal_0} in Theorem \ref{thm:main} is satisfied on a unique equivalence class $[k^*]$ and $(\Phi_0^\kappa)''\geq 0$ holds. In this case, the shape of the branch is given by
\begin{equation*}
        \bar \rho_{\kappa(z)}(x) = \bar \rho_\infty^{\kappa(z)} + z \dfrac{1}{\sqrt{\mathrm{card([k])}}} \sum_{l\in [k]} \omega_l(x) + o(z), \qquad z\in(-\delta,\delta),
\end{equation*}
with the same values $\kappa'(0)=0$ and $\kappa''(0)$ as defined in Theorem \ref{thm:main_2}.
\end{remark}

The most interesting case in our application framework is dimension $d=2$. In this case, the Euclidean group associated with the trivial lattice on the torus is the compact semi-direct sum of the Dihedral group $D_4$ and the compact group of translations on $\mathbb T^2$, $D_4 \overset{.}{+} \mathbb T^2$, see for example \cite{Dionne}. If we assume that $W$ is radially symmetric, then $\mathcal H$ is $\Gamma-$equivariant with $\Gamma=D_4 \overset{.}{+} \mathbb T^2$. In order to classify the patterns we can obtain with Theorem \ref{thm:ebl}, the procedure developed in \cite{Dionne,Dionne2} consists of writing the kernel $\mathcal V \subset L^2(\mathbb T^d)$ as a direct sum of $\Gamma-$irreducible subspaces $\mathcal V = \mathcal V_1 \oplus \cdots \oplus \mathcal V_n$, which in turn implies, for any isotropy subgroup $\Sigma$ of $\Gamma$,
\[  
\mathrm{Fix}_{\mathcal V}( \Sigma ) = \mathrm{Fix}_{\mathcal V_1}( \Sigma )\oplus \cdots \oplus \mathrm{Fix}_{\mathcal V_n}( \Sigma ). 
\]
Hence, it comes down to classifying $\Gamma$-irreducible representations of $\mathcal V$ depending on its dimension. In dimension 2, this work has already been done, for example in a general context in \cite{Dionne,Dionne2} or in the context of deterministic neural field models in \cite{FSV2022,VCF2015}. Our spectral computations in the proofs of Theorem \ref{thm:main} and Theorem \ref{thm:ebl} indicate that if $W$ is radially symmetric and $\tilde W(k)/\Theta(k)$ are unique up to permutations of $k$, then $\mathrm{dim}(\mathcal V) = 4$ or $\mathrm{dim}(\mathcal V) = 8$. In dimension 8, the full classification of isotropy subgroups of $\Gamma$ acting on a $8-$dimensional representation can be found in \cite[Table 3]{Dionne2}, with dimensions of the $\mathrm{Fix}$ set for each case. In dimension 4, there is a unique translation free irreducible representation. The reader can also find in \cite[Table 3]{Dionne} a general table relating planforms (patterns arising through equivariant bifurcations) to the type of lattice, the dimension of $\mathcal V$ and the isotropy subgroups. As explained in \cite[Section 2.a]{Dionne}, it is possible to consider only translation free subgroups by lowering the dimension if necessary, which we will show below in an example for stripes.

Therefore, we can use our previous analysis of bifurcation points in terms of the Fourier modes of $W$, combined with \cite[Theorem 2.3]{Dionne}, in order to conclude that when the bifurcation conditions of Theorem \ref{thm:ebl} are satisfied, there are branches of stationary states from the spatially homogeneous one with the following symmetries: stripes, also called rolls \cite[Figure 4]{CHS}; simples squares \cite[Figure 1]{Dionne}; squares \cite[Figure 2]{Dionne}; and anti-squares \cite[Figure 3]{Dionne}. In Section \ref{sec:numerics} we comment on some of these patterns and numerically explore their presence and stability in the setting of \eqref{eqn:2}. 

In order to make the abstract procedure above more explicit, let us take $L=1$ and detail two examples for the setting in the left plot of Figure \ref{fig:radsym-vs-nonradsym} in Section \ref{sec:numerics}. Numerical results indicate that the first bifurcation points in this case are associated to the multi-indexes $k=(4,0)$, $k=(4,1)$ and $k=(3,3)$.

\begin{example}\label{ex:1}
Consider the bifurcation point $\kappa^*$ associated to the mode $k=(4,0)$. Then $\mathcal V$ is generated by the functions
\[ \omega_{(4,0)} = \sqrt 2 \cos(8 \pi x_1) , \quad \omega_{(-4,0)} = \sqrt 2 \sin(8 \pi x_1),\quad \omega_{(0,4)} = \sqrt 2 \cos(8 \pi x_2), \quad \omega_{(0,-4)} = \sqrt 2 \sin(8 \pi x_2) . \]
The subgroup $\Sigma$ generated by the rotation of angle $\pi$ and the circle $\mathbb S$ of torus translations on the $x_1$ axis (respectively the $x_2$ axis) fixes only the span of $\omega_{(0,4)}$ (resp. of $\omega_{(4,0)}$), so we can apply Theorem \ref{thm:ebl}. The branch having the symmetry of $\Sigma$, consists of horizontal (resp. vertical) stripes. Since this planform arises from a subgroup which is not translation free, it was expected that it would have a one dimensional shape. Another way to obtain this pattern is to apply Theorem \ref{thm:ebl} in $L^2_S(\mathbb T^2)$ instead of $L^2(\mathbb T^2)$; then, the kernel $\mathcal V$ is only two-dimensional, containing the cosines, and the subgroup generated by the reflection on $x_2$ (resp. on $x_1$) fixes only a one dimensional space and generates the horizontal (resp. vertical) stripes.

The translation free subgroup $\Sigma=D_4$ fixes $\omega_{(4,0)}+\omega_{(0,4)}$, and the branch with this symmetry consists of simple squares, see the left plot in Figure \ref{fig:modepatterns}. Note that this branch could also be found with the approach of exchangable coordinates outlined in Remark \ref{rem:exchange}.
\end{example}

\begin{example}\label{ex:2}
Consider the Fourier mode $k=(3,3)$. The kernel $\mathcal V$ is now generated by
\begin{align*}
    &\omega_{(3,3)} = 2 \cos(6 \pi x_1)\cos(6\pi x_2), \quad \omega_{(3,-3)} = 2 \cos(6 \pi x_1)\sin(6\pi x_2),\\
    &\omega_{(-3,3)} = 2 \sin(6 \pi x_1)\cos(6\pi x_2), \quad \omega_{(-3,-3)} = 2 \sin(6 \pi x_1)\sin(6\pi x_2).
\end{align*}  
The translation free subgroup $\Sigma=D_4$ fixes only $\omega_{(3,3)}$, and hence gives rise to a bifurcating branch with the symmetry of simple squares, see the right plot in Figure \ref{fig:modepatterns}. This branch matches the pitchfork bifurcation in $L^2_S(\mathbb T^2)$ which Theorem \ref{thm:main} yields.
\end{example}

\begin{remark}\label{rem:higherdim}
Finally, we highlight the following: \\ 
1. If $\mathrm{Fix}_{\mathcal V}( \Sigma )$ is of higher dimension than 1, there could still be a bifurcation point there, but it is not detectable with the equivariant branching lemma.\\
2. When $W$ is just component-wise even, we can still apply Theorem \ref{thm:ebl}, but $\Gamma$, and correspondingly the number of isotropy subgroups, will be smaller compared to the case where $W$ is radially symmetric.
\end{remark}

\subsection{Study of the bifurcations given a connectivity kernel}

Another important question is to determine, given a connectivity kernel $W$, how many continuous bifurcations will arise from the constant stationary state and for which values of the parameter. Under a convexity assumption on $\Phi$, we can provide some answers.

\begin{theorem}[Bifurcations given a connectivity kernel $W$]\label{thm:Given}
Grant $\Phi''\geqslant 0$ and Assumption \ref{as:1}.
For all $k\in\N^d\setminus\{0\}$ such that $\tilde W(k)/\Theta(k)$ is unique up to permutations of $k$, if
\begin{equation}\label{eqn:given_potential}
\dfrac{\tilde W(k)}{\Theta(k)}  > \dfrac{L^{\frac d 2}}{\Phi'(W_0 \rho^* + B)},
\end{equation}
where $\rho^*=\lim_{\kappa\to+\infty} \bar\rho_\infty$,
then there exists a unique $\kappa$ such that $(\bar\rho_\infty^\kappa,\kappa)$ is a bifurcation point.
\end{theorem}

\begin{proof}
According to Lemma \ref{lm:g_eta}, the bifurcation condition can be re-written in the form
\[ \dfrac{\tilde W (k)}{\Theta(k)} =  \dfrac{L^{\frac d2}}{ (\Phi_0^\kappa)' g\left(\sqrt{\frac{\kappa}{2}} \Phi_0^\kappa\right)}, \]
with the function $g$ being increasing and satisfying for all $\eta>0$,
\[ 1-\frac{2}{\pi} = g(0) < g(\eta) < \lim_{\eta\to+\infty} g(\eta) = 1. \]
By Lemma \ref{lm:der_bar_rho}, $\bar\rho_\infty$ is decreasing, and thus $ \Phi_0^\kappa$ is increasing. Since we assume $\Phi''\geqslant 0$, $\Phi'$ is increasing and thus $ (\Phi_0^\kappa)'$ is also increasing. We deduce from all this information that the function
\[ \Psi : \kappa\mapsto \dfrac{L^{\frac d2}}{ (\Phi_0^\kappa)' g\left(\sqrt{\frac{\kappa}{2}} \Phi_0^\kappa\right)}  \]
is decreasing. By continuity of $\Phi'$ and Lemma \ref{lm:kappa_small} and \ref{lm:inf_bound_rho}, we have
\[ \lim_{\kappa\to \kappa_c} \Psi(\kappa) = +\infty \qquad \mathrm{and} \qquad \lim_{\kappa\to +\infty} \Psi(\kappa) =  \dfrac{L^{\frac d 2}}{\Phi'(W_0 \rho^* + B)},  \]
where $\kappa_c = \tfrac{2|W_0|^2}{L^{2d}\pi B^2}$ and $\rho^* = \lim_{\kappa\to +\infty} \bar \rho_\infty$. Note that the function $\Psi$ is not defined on $(0,\kappa_c)$ since $ (\Phi_0^\kappa)'=0$.

Hence, for all $k\in\N^d$ satisfying the hypotheses, there is a unique intersection up to permutations of $k$, between the horizontal line $\tfrac{\tilde W (k)}{\Theta(k)}$ and the decreasing function $\Psi$ for some value $\kappa\in(\kappa_c,+\infty)$. We can then apply either Theorem \ref{thm:main} or the discussion in Subsection \ref{sec:high_dim} to prove that there is a bifurcation for this value of $\kappa$.
\end{proof}

\begin{remark} 
    If $\Phi$ is not a $C^1$ function at point 0, for example the ReLU function $\Phi=(x)^+$, then the function
    \[ \Psi : \kappa\mapsto \dfrac{L^{\frac d2}}{ (\Phi_0^\kappa)' g\left(\sqrt{\frac{\kappa}{2}} \Phi_0^\kappa\right)}  \]
    does not satisfy $\lim_{\kappa\to \kappa_c} \Psi(\kappa) = 0$. However, if we have a left limit $\Phi'_{+} = \lim_{\kappa\to0^+} \Phi'(\kappa)$, and if $\Phi''$ exists on $\R^*$, we can still state a similar result with
    \begin{equation*}   \dfrac{L^{\frac d 2}}{\Phi'(W_0 \rho^* + B)}  < \dfrac{\tilde W(k)}{\Theta(k)}  <  \dfrac{\pi L^{\frac d 2}}{(\pi-2)\Phi'_{+}}   \end{equation*}
    in lieu of \eqref{eqn:given_potential}.
\end{remark}

\begin{corollary}[Characterisation of the first bifurcation]\label{cor:first}
Grant $\Phi''\geqslant 0$ and Assumption \ref{as:1}.
Let $k^*\in\N$ be such that 
\[ \dfrac{\tilde W(k^*)}{\Theta(k^*)} = \max \left\{ \dfrac{\tilde W(k)}{\Theta(k)} \ | \ k\in\N. \right\},    \]
If 
\[\dfrac{\tilde W(k^*)}{\Theta(k^*)}  >  \dfrac{L^{\frac d 2}}{\Phi'(W_0 \rho^* + B)}\]
and if for all $k\in\N^d\setminus\{k^*\}$, either $ \tfrac{\tilde W(k^*)}{\Theta(k^*)}\neq  \tfrac{\tilde W(k)}{\Theta(k)}$ or $k$ is a permutation of $k^*$, then there exists $\kappa^*$ such that $(\bar\rho_\infty^{\kappa^*},\kappa^*)$ is the first bifurcation point and $\kappa^*$ is the unique positive number such that
\begin{equation}\label{eqn:W_tilde_equal2}
        \dfrac{\tilde W (k^*)}{\Theta(k^*)} =  \dfrac{1}{L^{\frac d2} (\Phi_0^{\kappa^*})' \left( \dfrac1{L^d} - L^d\bar \rho_\infty^{\kappa^*}  \left(\bar \rho_\infty^{\kappa^*} - \dfrac{ \Phi_0^{\kappa^*}}{L^d}\right)\kappa^* \right)}.
    \end{equation}
\end{corollary}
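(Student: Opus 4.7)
The plan is to deduce the corollary from Theorem \ref{thm:Given} by exploiting the strict monotonicity of the auxiliary function
\[
\Psi:\kappa\longmapsto \dfrac{L^{\frac d2}}{\Phi_0'\, g\!\left(\sqrt{\tfrac{\kappa}{2}}\Phi_0\right)}
\]
on $(\kappa_c,+\infty)$ with $\kappa_c=\tfrac{2|W_0|^2}{L^{2d}\pi B^2}$, which was already established in the proof of Theorem \ref{thm:Given}. Recall that $\Psi$ is decreasing from $+\infty$ at $\kappa_c^+$ to the limit $L^{d/2}/\Phi'(W_0\rho^*+B)$ at $+\infty$, and the bifurcation condition \eqref{eqn:W_tilde_equal_0} rewrites exactly as $\tilde W(k)/\Theta(k)=\Psi(\kappa)$.

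First, I apply Theorem \ref{thm:Given} directly to $k^*$: the hypothesis on the size of $\tilde W(k^*)/\Theta(k^*)$ combined with the uniqueness-up-to-permutation assumption ensures that the theorem produces exactly one $\kappa^*\in(\kappa_c,+\infty)$ at which $(\bar\rho_\infty^{\kappa^*},\kappa^*)$ is a bifurcation point, and by construction $\kappa^*$ is the unique positive number satisfying \eqref{eqn:W_tilde_equal2}.

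Second, to prove that this $\kappa^*$ is the \emph{first} bifurcation point, I argue by contradiction using that the condition \eqref{eqn:W_tilde_equal_0} is necessary for a bifurcation from $(\bar\rho_\infty^{\tilde\kappa},\tilde\kappa)$: indeed, outside this condition the Fredholm operator $D_{\bar\rho}\bar{\mathcal G}(\bar\rho_\infty^{\tilde\kappa},\tilde\kappa)=I-T$ is invertible (its eigenvalues computed in the proof of Theorem \ref{thm:main} are all nonzero), so the implicit function theorem prevents branching. If $\tilde\kappa\in(\kappa_c,+\infty)$ is a bifurcation point, there must therefore exist $k\in\N^d$ with $\tilde W(k)/\Theta(k)=\Psi(\tilde\kappa)$. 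Maximality of $k^*$ gives $\Psi(\tilde\kappa)\leqslant \tilde W(k^*)/\Theta(k^*)=\Psi(\kappa^*)$, and strict monotonicity of $\Psi$ yields $\tilde\kappa\geqslant\kappa^*$.

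Third, I must rule out bifurcations in $(0,\kappa_c]$. By Lemma \ref{lm:kappa_small}, for every $\kappa\in(0,\kappa_c)$ one has $\bar\rho_\infty^\kappa>B/|W_0|$, hence $W_0\bar\rho_\infty^\kappa+B<0$, and Assumption \ref{as:1} (together with $\Phi\equiv 0$ on $\R_-$ and $\Phi\in C^1(\R^*)$) forces $\Phi_0'=0$. The expression for $D_{\bar\rho}\bar{\mathcal G}(\bar\rho_\infty^\kappa,\kappa)$ in Lemma \ref{lm:D_1} then collapses to the identity, which is invertible, so no bifurcation can occur. The only technicality is the boundary value $\kappa=\kappa_c$, but there the same reasoning applies up to continuity since $\lim_{\kappa\to\kappa_c^-}\Phi_0'=0$ and the bifurcation condition $\tilde W(k^*)/\Theta(k^*)=\Psi(\kappa)$ with $\Psi(\kappa)\to+\infty$ as $\kappa\to\kappa_c^+$ is incompatible with a finite Fourier mode. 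I do not expect any substantial obstacle here: the argument is essentially a rearrangement of the bifurcations in decreasing order of $\tilde W(k)/\Theta(k)$, with the additional verification that the regime $\kappa\leqslant\kappa_c$ is inert.
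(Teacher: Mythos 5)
Your proof is correct and takes the natural route the paper intends (the corollary is stated in the paper without an explicit proof, being an immediate consequence of Theorem \ref{thm:Given}). The three steps — existence and uniqueness of $\kappa^*$ via Theorem \ref{thm:Given}, the ordering argument combining strict monotonicity of $\Psi$ with maximality of $\tilde W(k^*)/\Theta(k^*)$, and the elimination of the inert regime $\kappa\leq\kappa_c$ via $\Phi_0'=0$ — reconstruct exactly the reasoning that justifies the word ``first'' in the statement.
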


 Notice that the value $\kappa^*$ characterised by \eqref{eqn:W_tilde_equal2}, that yields the first bifurcation point in the above corollary, coincide with the very point where the PDE system looses its linear stability, as we discussed in Section \ref{sec:linearstability}.

\section{Bifurcations of the full four component model} \label{sec:4component}
 Utilizing the details derived during the bifurcation analysis of the one component model \eqref{eqn:2}, we now sketch the procedure for finding bifurcation points of the full four component model \eqref{eq:4PDE}.
 
 First, note that when $B$ is constant in \eqref{eq:4PDE}, the PDEs for the four different directions $\beta = 1,2,3,$ and $4$, have the same right hand side. With $W\in L^2_S(\mathbb T^d)$, one can check that this system has the same spatially homogeneous stationary states as \eqref{eqn:2}. However, our study of the functional for the mean for the one component model \eqref{eq:onecompfunctional} does not fully characterise the bifurcations for the four component model \eqref{eq:4PDE}. This is due to the fact that the argument of $\Phi $ now is different: $W \ast \bar{\rho} (x) $ is replaced by $m(x) := \tfrac{1}{4}\sum_\beta W^\beta \ast \bar \rho^\beta (x)$, see \eqref{eq:phi}. 
 
 To be more specific, each component $\rho^1, \rho^2, \rho^3$ and $\rho^4$, will satisfy the calculations in Section \ref{eq:stationarystateequ}, giving
  \begin{align}\label{eq:rhobetastat}
     \bar\rho^\beta  = \dfrac{1}{Z_\rho} \int_0^{+\infty} s \e^{ -\kappa \frac{ \left( s - \Phi(m + B) \right)^2}{2}}\diff s, \qquad Z_\rho = L^d\int_0^{+\infty} \e^{ -\kappa \frac{ \left( s - \Phi(m + B) \right)^2}{2}}\diff s,
 \end{align}
 for each $\beta$. As a consequence, the stationary states are equal, $\rho^1(x)=\rho^2(x)=\rho^3(x)=\rho^4(x)$, since the value of $\Phi(x) = \Phi(m(x)+B)$ is identical in the four different directions. 
 Assuming no spatial dependence of the stationary state and periodicity of $W$ then yields $\bar\rho^\beta = \bar\rho_\infty$, which is the unique spatially homogeneous zero of the functional $\bar{\mathcal{G}}(\bar \rho,\kappa)$ for the mean in the one component case \eqref{eq:onecompfunctional}. For spatially dependent stationary states, however, $\Phi(m(x)+B)$ in \eqref{eq:rhobetastat} and $\Phi(W \ast \bar\rho +B)$ when $\bar{\mathcal{G}}(\bar \rho,\kappa)=0$ in \eqref{eq:onecompfunctional} are not identical, which leads to minor modifications in the bifurcation conditions as we now show. 
 
We are only interested in bifurcations from the spatially homogeneous solutions, so it suffices to consider bifurcations of the spatially homogeneous stationary states of the sum $m$. The stationary states of $m$ are zeroes of the functional
 \begin{align*}
     \mathcal{Q}(m, \kappa) = m - \sum_\beta W^\beta \ast \left( \dfrac{1}{Z_\rho} \int_0^{+\infty} s \e^{ -\kappa \frac{ \left( s - \Phi(m + B) \right)^2}{2}}\diff s\right).
 \end{align*}
We define the functional $\mathcal{V}(m, \kappa) :=\mathcal{Q}(m + m_\infty^\kappa, \kappa) = \mathcal{Q}(m + W_0\bar\rho_\infty^\kappa, \kappa)$, where $m_\infty^\kappa= W_0\bar\rho_\infty^\kappa$ is the spatially homogeneous stationary state for a given $\kappa$. Like in the one component case, we wish to apply the Crandall--Rabinowitz theorem (Theorem \ref{thm:CR}) to $\mathcal{V}(m, \kappa)$. This can be done by following the proof of Theorem \ref{thm:main} step by step, and making the necessary adjustments. Note that keeping the assumptions on $W$, it makes sense to consider the same space and basis as in the proof of Theorem \ref{thm:main}. Let $h_1,h_2\in L^2_S(\mathbb T^d)$. By following the calculations leading up to, and in the proof of, Theorem \ref{thm:main}, one can check that

\begin{align}\label{eq:mderiv}
D_m \mathcal{V}(0, \kappa)[h_1] = D_m \mathcal{Q}(W_0\bar \rho_\infty^\kappa, \kappa)[h_1] = h_1 -\frac{ (\Phi_0^\kappa)'}{4} \left(\dfrac{1}{L^d} - L^d\bar \rho_\infty^\kappa\left(\bar \rho_\infty^\kappa - \frac{ \Phi_0^\kappa}{L^d}\right)\kappa\right) \sum_\beta W^\beta \ast h_1,
\end{align}
and

\begin{align} \label{eq:mkderiv}
 D_{m\kappa}^2 & \mathcal{V}(0, \kappa)[h_2] \nonumber \\
=  & \, D^2_{m\kappa} \mathcal Q(m_\infty^\kappa,\kappa)[h_2]+ D^2_{mm} \mathcal Q(m_\infty^\kappa,\kappa) \left[W_0\dfrac{\diff \bar\rho_\infty}{\diff \kappa}(\kappa), h_2\right] \nonumber \\ 
=  & \, - (\Phi_0^\kappa)'L^d \left(\bar \rho_\infty^\kappa - \frac{ \Phi_0^\kappa}{L^d}\right)\frac{ \Phi_0^\kappa}{2}\left[ \frac{1}{L^d}g\left(\sqrt{\tfrac{\kappa}{2}} \Phi_0^\kappa\right) -L^d\kappa (\bar \rho_\infty^\kappa)^2 \right] \frac{1}{4} \sum_\beta W^\beta \ast h_2 \\
& + W_0 \dfrac{\diff \bar\rho_\infty}{\diff \kappa}(\kappa) \bigg[-\frac{ (\Phi_0^\kappa)''}{L^d}g\left(\sqrt{\tfrac{\kappa}{2}} \Phi_0^\kappa\right) \nonumber \\
& \qquad \qquad \qquad \,\,\ + ( (\Phi_0^\kappa)')^2L^d\kappa\left(\bar \rho_\infty^\kappa -\frac{ \Phi_0^\kappa}{L^d}\right)\!\!\!\left(\frac{1}{L^d}g\left(\sqrt{\tfrac{\kappa}{2}} \Phi_0^\kappa\right)-L^d\kappa (\bar \rho_\infty^\kappa)^2\right)\!\bigg] \!\frac{1}{4} \!\sum_\beta W^\beta\!\ast h_2 \nonumber
\end{align}
with $g(\eta)$ as defined in Lemma \ref{lm:g_eta},
\[ \frac{1}{L^d} g\left(\sqrt{\frac{\kappa}{2}} \Phi_0^\kappa\right) = \dfrac1{L^d} - L^d\bar \rho_\infty^\kappa  \left(\bar \rho_\infty^\kappa - \dfrac{ \Phi_0^\kappa}{L^d}\right)\kappa. \]
Now, as in the proof of Theorem \ref{thm:main}, we let $h_1 = \omega_k$, where $\omega_k$ is in the orthonormal basis of $L^2_S(\mathbb T^d)$, in \eqref{eq:mderiv}. Using the fact that $W$ and the shifts $r^\beta$ are coordinate-wise even, we calculate 
\begin{align*}
\sum_\beta W^\beta \ast \omega_k = & \, \sum_\beta \int_{\mathbb T^d} W(x-y-r^\beta) \tfrac{\Theta(k)}{L^{\frac d2}}\prod_{i=1}^{d}\cos\left(\tfrac{2\pi k_i}{L} y_i\right) \diff y \\
= & \, \sum_\beta \int_{\mathbb T^d} W(x-y-r^\beta) \tfrac{\Theta(k)}{L^{\frac d2}}\prod_{i=1}^{d}\bigg[\cos\left(\tfrac{2\pi k_i}{L} (y_i+r_i^\beta-x_i\right)\cos\left(\tfrac{2\pi k_i}{L} (x_i-r_i^\beta)\right) \\
& - \sin\left(\tfrac{2\pi k_i}{L} (y_i+r_i^\beta-x_i)\right)\sin\left(\tfrac{2\pi k_i}{L} (x_i-r^\beta)\right) \bigg] \diff y \\
= &  \, \sum_\beta \int_{\mathbb T^d} W(x-y-r^\beta) \tfrac{\Theta(k)}{L^{\frac d2}}\prod_{i=1}^{d}\cos\left(\tfrac{2\pi k_i}{L} (y_i+r_i^\beta-x_i\right)\cos\left(\tfrac{2\pi k_i}{L} (x_i-r_i^\beta)\right) \diff y \\
= &  \, \sum_\beta \int_{\mathbb T^d} W(x-y-r^\beta) \prod_{i=1}^{d}\cos\left(\tfrac{2\pi k_i}{L} (y_i+r_i^\beta-x_i\right) \diff y
\tfrac{\Theta(k)}{L^{\frac d2}} \prod_{i=1}^{d}\cos\left(\tfrac{2\pi k_i}{L} (x_i-r_i^\beta)\right) \\
= &  \, \tfrac{L^{\frac d2}}{\Theta(k)}\Tilde{W}(k)
\sum_\beta \tfrac{\Theta(k)}{L^{\frac d2}} \prod_{i=1}^{d}\left[\cos\left(\tfrac{2\pi k_i}{L} x_i\right)\cos\left(\tfrac{2\pi k_i}{L} r_i^\beta\right)-\sin\left(\tfrac{2\pi k_i}{L} x_i\right)\sin\left(\tfrac{2\pi k_i}{L} r_i^\beta\right)\right] \\
= &  \, \tfrac{L^{\frac d2}}{\Theta(k)}\Tilde{W}(k) \omega_k
\sum_\beta \prod_{i=1}^{d}\cos\left(\tfrac{2\pi k_i}{L} r_i^\beta\right),
\end{align*}
where the last step follows due to the shifts being coordinate-wise even. 
Then, comparing the resulting expression with the bifurcation condition $\eqref{eqn:W_tilde_equal}$, we arrive at the slightly modified assumption 
\begin{align}\label{eq:4compbifurcation}
\exists ! \ k^* \in \mathbb N^d \quad \mathrm{s.t.} \quad 
        \dfrac{\tilde W (k^*)}{\Theta(k^*)} \frac{1}{4}\sum_\beta \prod_{i=1}^{d}\cos\left(\tfrac{2\pi k_i}{L} r_i^\beta\right) = \dfrac{1}{L^{\frac d2} (\Phi_0^\kappa)' \left( \dfrac1{L^d} - L^d\bar \rho_\infty^\kappa  \left(\bar \rho_\infty^\kappa - \dfrac{ \Phi_0^\kappa}{L^d}\right)\kappa \right)}
\end{align}
for $(m_\infty^\kappa,\kappa)=(W_0 \bar \rho_\infty^\kappa,\kappa)$ being a bifurcation point. Setting $h_2= v \in \mathrm{Ker}\big(D_{m} \mathcal V(0,\kappa)\big)$ in \eqref{eq:mkderiv}, we find that the assumptions needed for $D_{m\kappa}^2 \mathcal{V}(0, \kappa)\left[ v\right] \neq 0$ are exactly the same as for the one component model (cf. \eqref{eqn:no_more_idea_for_labels_3}). With $h_1=v$ in \eqref{eq:mderiv} one can easily check that $v \in \mathrm{Ker}\big(D_{m} \mathcal V(0,\kappa)\big)$ has to satisfy
\begin{align*}
\frac{1}{4}\sum_\beta W^\beta \ast v = \frac{1}{ (\Phi_0^\kappa)' \left(\dfrac{1}{L^d} - L^d\bar \rho_\infty^\kappa\left(\bar \rho_\infty^\kappa - \frac{ \Phi_0^\kappa}{L^d}\right)\kappa\right)}v,    
\end{align*}
such that
\begin{align*}
D_{m\kappa}^2 \mathcal{V}(0, \kappa)[v] = D_{\bar \rho \kappa}^2 \mathcal{H}(0, \kappa)[v],
\end{align*}
where $D_{\bar \rho \kappa}^2 \mathcal{H}(0, \kappa)[v]$ can be found in \eqref{eqn:no_more_idea_for_labels_3}.
Hence, Theorem \ref{thm:main} is valid, under the same assumptions on $\Phi$, for the four component model \eqref{eq:4PDE} when $\tilde W (k^*)$ in \eqref{eqn:W_tilde_equal} is exchanged with 
$$
\tilde W (k^*) \frac{1}{4}\sum_\beta\prod_{i=1}^{d}\cos\left(\tfrac{2\pi k_i}{L} r_i^\beta\right). 
$$
As highlighted in Subsection \ref{sec:linearstability} for \eqref{eqn:2}, this condition coincides with the linear stability threshold obtained in \cite[Theorem 3.6]{CHS} in the two dimensional case $d=2$ when the shifts are coordinate-wise even. 

Similarly, the higher order derivatives needed to characterise the shape of the branches in Theorem \ref{thm:main_2} are the same as in the one component case. Thus, Theorem \ref{thm:main_2} holds without modification. Finally, this can straightforwardly be extended to Theorem \ref{thm:ebl} with the slightly modified bifurcation condition.

\section{The bifurcation condition and patterns in two dimensions}\label{sec:numerics}
We now present supplementary insight about the location and shape of the bifurcations described theoretically in the previous sections in the case of two spatial dimensions ($d=2$). We also provide a brief comment on the stability of the spatially homogeneous state and on the stationary hexagonal pattern for \eqref{eqn:2} and \eqref{eq:4PDE} with radially symmetric kernels (see also \cite{CHS}).

\begin{figure}[ht]
    \centering
    \includegraphics[width=0.49\textwidth]{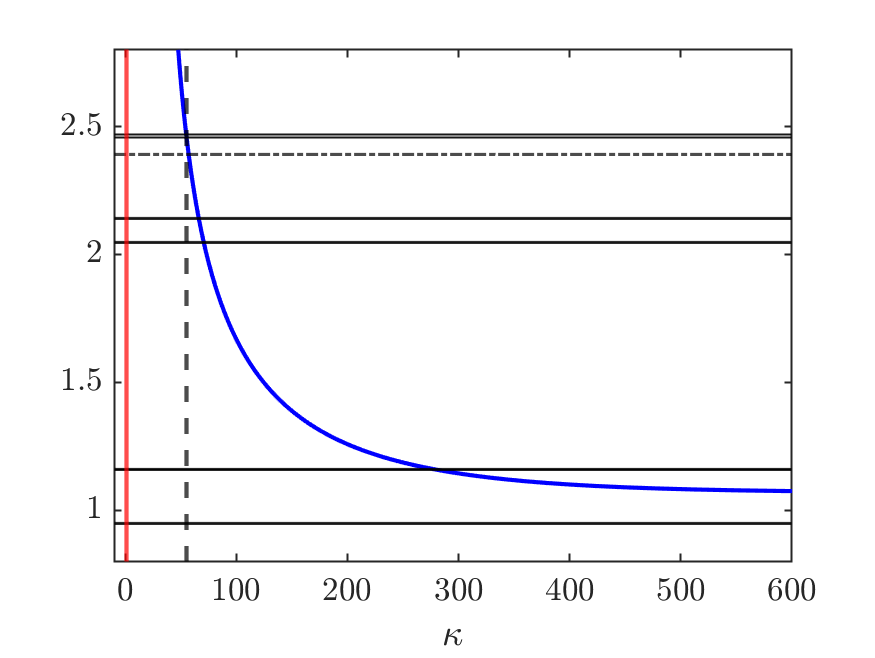}
     \includegraphics[width=0.49\textwidth]{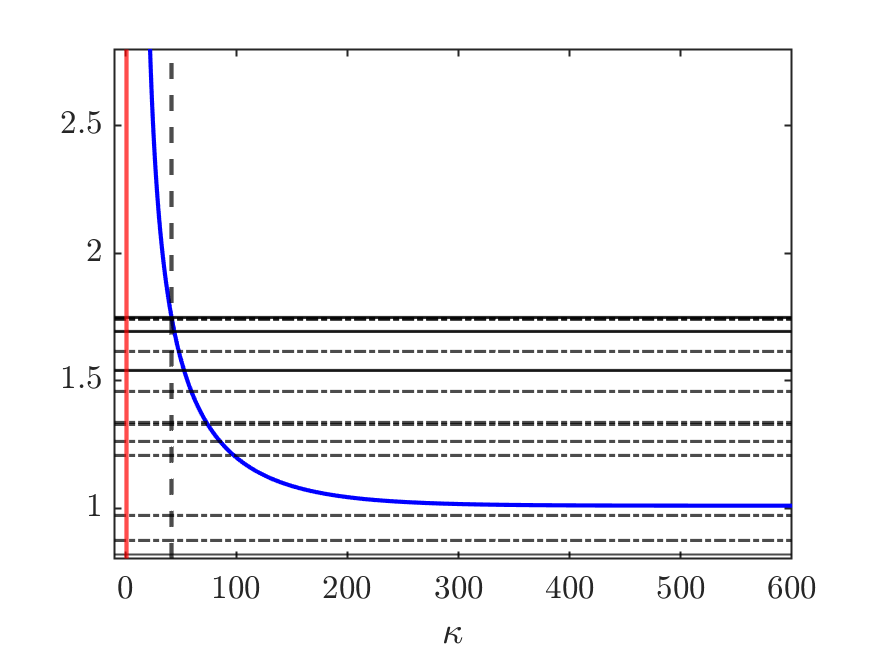}
    \caption{Bifurcation points represented as the crossing of Fourier modes (the black horizontal lines) and the right hand side of \eqref{eqn:W_tilde_equal_0} as a function of $\kappa$ (the blue line). The red line represents the critical $\kappa$-condition in Lemma \ref{condkappa}, and the vertical dashed line the condition in Remark \ref{rem:linear}. Here, $L=1$, $B=3$, $\Phi(x) = 0.5x\left(1+\tfrac{x}{\sqrt{x^2+0.1}}\right)^+$, and $W(x,y)=-0.005\cdot 2^{14}\left(1+\tanh\left(10-50\sqrt{ax^2+y^2}\right)\right)$. Left: $a=1$ (radially symmetric), right: $a=2$ (coordinate-wise even, but not radially symmetric).}
    \label{fig:radsym-vs-nonradsym}
\end{figure}

\subsection{The bifurcation condition}
Theorems \ref{thm:main}, \ref{thm:ebl}, and \ref{thm:Given} give us sufficient conditions for bifurcations, but since the right-hand side of the condition \eqref{eqn:W_tilde_equal_0} depends nonlinearly on $\kappa$, it is not easy to locate the bifurcation points theoretically. Thus, we plot in Figure \ref{fig:radsym-vs-nonradsym}, for a choice of $\Phi$ and two choices of $W$, the right-hand side of this condition in blue as a function of $\kappa$ and the left-hand side as horizontal lines representing Fourier modes. Each crossing between the decreasing blue curve and the horizontal lines yields a possible bifurcation point as described by Theorem \ref{thm:main} and Theorem \ref{thm:ebl}. A zoom of the left plot of Figure \ref{fig:radsym-vs-nonradsym} was presented in Figure \ref{fig:intro-illustration} in the introduction.

In all the plots, the vertical dashed line represents the linear stability condition from \cite{CHS}, recalled in Section \ref{sec:linearstability}, which coincides with the location of the first bifurcation (see Corollary \ref{cor:first}). The red line represents the critical value $\kappa_c = \tfrac{2|W_0|^2}{L^{2d}\pi B^2}$ (see Lemma \ref{lm:kappa_small}): no bifurcation can occur for $\kappa \leqslant \kappa_c$ for the right-hand side of the condition \eqref{eqn:W_tilde_equal_0} has the constant value $+\infty$ on $(0,\kappa_c]$.

The left plot in Figure \ref{fig:radsym-vs-nonradsym} shows the case of a radially symmetric connectivity kernel. Here, each crossing yields a bifurcation point. The crossings with the dashed black lines denote that there is a unique branch in $L^2_S(\mathbb T^d)$ at that point according to Theorem \ref{thm:main}. Note that this does not imply that this branch is the only branch in $L^2(\mathbb T^d)$ at that point or that there are no coordinate-wise even patterns at the other bifurcation points. As described in Section \ref{sec:high_dim}, each crossing in the left plot of Figure \ref{fig:radsym-vs-nonradsym} gives rise to multiple branches, each with their square symmetry, emanating from the bifurcation point. It can be seen in the right plot of Figure \ref{fig:radsym-vs-nonradsym} that a lot more bifurcation points in $L^2_S(\mathbb T^d)$ are detected with Theorem \ref{thm:main} when $W$ is in $L^2_S(\mathbb T^d)$, but is not radially symmetric, compared to the case of the radially symmetric connectivity to the left. Note that also in this case Theorem \ref{thm:ebl} yields additional bifurcation branches, see Remark \ref{rem:higherdim}.

Finally, we note that the only change of the plots in Figure \ref{fig:radsym-vs-nonradsym} in the case of the four component model \ref{eq:4PDE} is that the Fourier modes are re-scaled by a shift-dependent factor according to the bifurcation condition \eqref{eq:4compbifurcation} in Section \ref{sec:4component}.

\subsection{Patterns and numerical exploration of stability}
Here we plot examples of patterns along some of the bifurcation branches and numerically describe the loss of stability of the spatially homogeneous state of \eqref{eqn:2} through a simple bifurcation diagram in the case of the radially symmetric kernel of Figure \ref{fig:radsym-vs-nonradsym}. We do not provide colorbar scales for the plots in this section as we are only interested in the qualitative shape of the patterns, and not the quantitative aspects.

\begin{figure}[ht]
    \centering
    \includegraphics[width=0.32\textwidth]{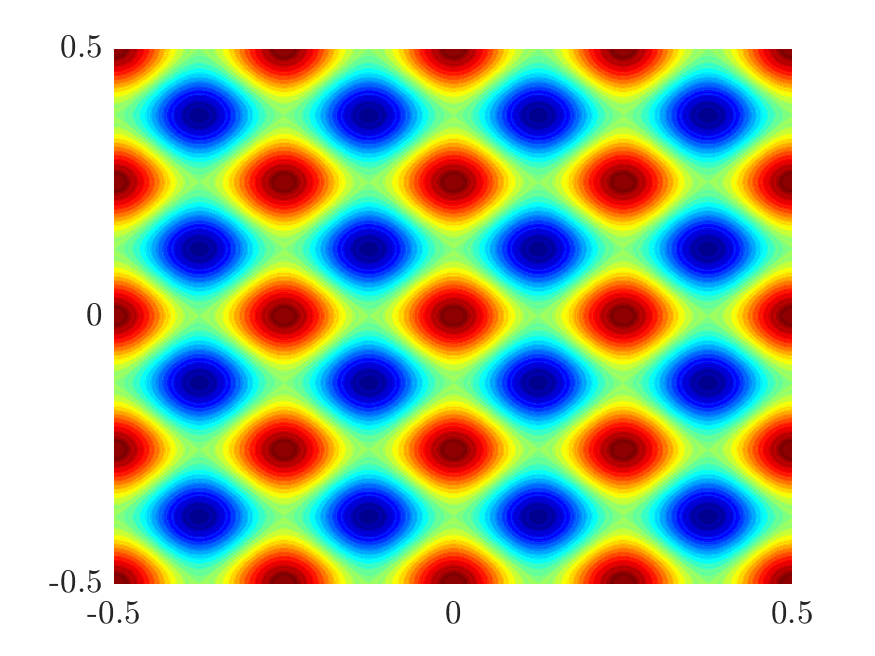}
     \includegraphics[width=0.32\textwidth]{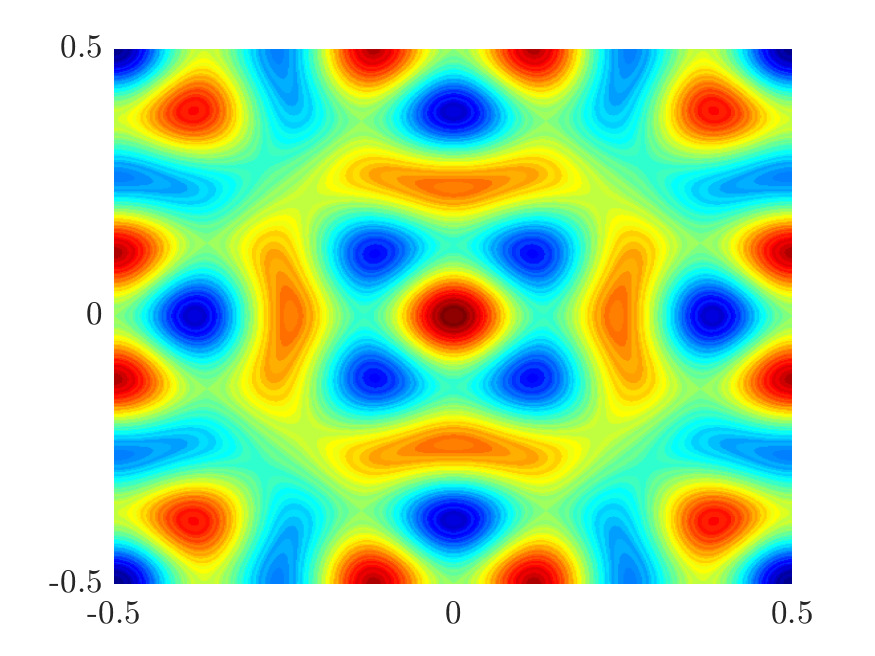}
        \includegraphics[width=0.32\textwidth]{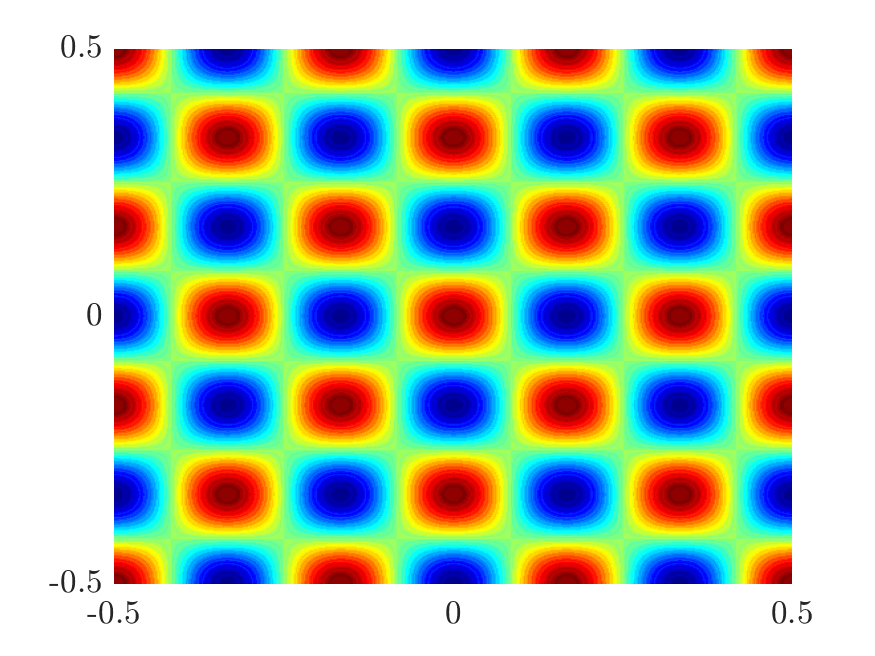}
    \caption{Patterns  in $L^2_S(\mathbb{T}^2)$, emerging along a bifurcation branch according to Theorem \ref{thm:main} and Remark \ref{rem:exchange} at each of the three first bifurcation points. The setting is as in the left plot of Figure \ref{fig:radsym-vs-nonradsym}. Left to right (first to third bifurcation point): $k^*=(0,4)$ and $(4,0)$, $k^*=(1,4)$ and $(4,1)$, and $k^*=(3,3).$}
    \label{fig:modepatterns}
\end{figure}

In Figure \ref{fig:modepatterns} the three functions $\omega(x)$ corresponding to the leftmost (first) crossings in the left plot of Figure \ref{fig:radsym-vs-nonradsym} according to Theorem \ref{thm:main} and Remark \ref{rem:exchange} are depicted. They correspond to the values or equivalence classes $[k^*] = ((4,0), (0,4))$, $[k^*]=((1,4), (4,1))$, and $k^*=(3,3)$ (from left to right in Figure \ref{fig:modepatterns}). Keeping $L=1$, the functions that we plot are
\begin{equation*}
    \omega_{[(4,0)]}(x) = \frac{1}{\sqrt 2}\big(\cos(8\pi x) + \cos(8\pi y)\big), \qquad \omega_{[(1,4)]}(x)= \cos(8\pi x)\cos(2\pi x) + \cos(2\pi x)\cos(8\pi x),
\end{equation*}
and
\begin{equation*}
     \omega_{(3,3)}(x) = 2\cos(6\pi x)\cos(6\pi y).
\end{equation*}
Note that the existence of these patterns could also be deduced from Theorem \ref{thm:ebl}, see Examples \ref{ex:1} and \ref{ex:2}.

As remarked in Section \ref{sec:high_dim}, the shape of the patterns along the different branches have been characterised in two dimensions \cite{Dionne}, and, as already noted, with a square periodic lattice, there will be branches with patterns consisting of rolls/stripes, simple squares, super squares, or anti-squares. Restricting the functional \eqref{eq:onecompfunctionalH} to the square lattice defined by the periodicity of the problem as we did in Section \ref{sec:high_dim}, the characteristic hexagonal pattern obtained through time evolution of the PDE system \eqref{eq:4PDE} in \cite{CHS} in the case of a radially symmetric connectivity kernel will thus not appear along any of the corresponding bifurcation branches. This can heuristically be explained by the fact that a hexagonal pattern cannot be fitted periodically on the square while at the same time preserve any square symmetries---the ratio between the inradius and circumradius of a hexagon is $\frac{\sqrt{3}}{2}$. 
A simple Fourier decomposition of a hexagonal pattern which periodically fits on the square $[-0.5,0.5]^2$ is
\begin{align}\label{eq:hex}
    \omega^{\mathrm{hex}}(x_1,x_2) = & \, \cos(6\pi x_1)\cos(6\pi x_2) + \cos(8\pi x_1)\cos(2\pi x_2) + \cos(2\pi x_1)\cos(8\pi x_2)\\ & 
    + \sin(6\pi x_1)\sin(6\pi x_2) - \sin(8\pi x_1)\sin(2\pi x_2) - \sin(2\pi x_1)\sin(8\pi x_2), \nonumber
\end{align}
but this is not symmetric with respect to the corresponding periodicity lattice. One can check that this pattern is coordinate-wise even with respect to axes rotated clockwise by $\frac{\pi}{12}$.

\begin{figure}[ht]
    \centering
    \includegraphics[width=0.32\textwidth]{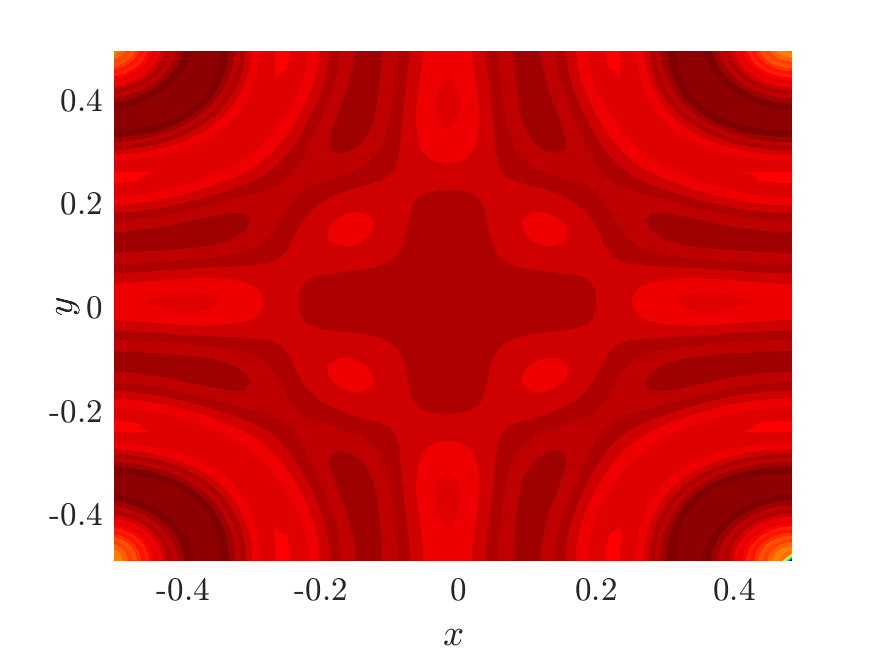}
     \includegraphics[width=0.32\textwidth]{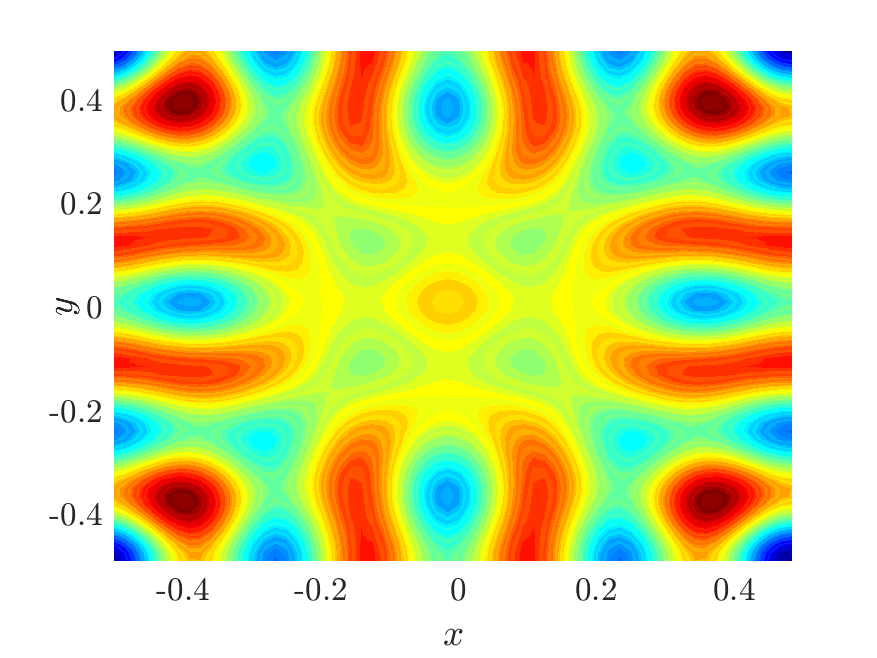}
          \includegraphics[width=0.32\textwidth]{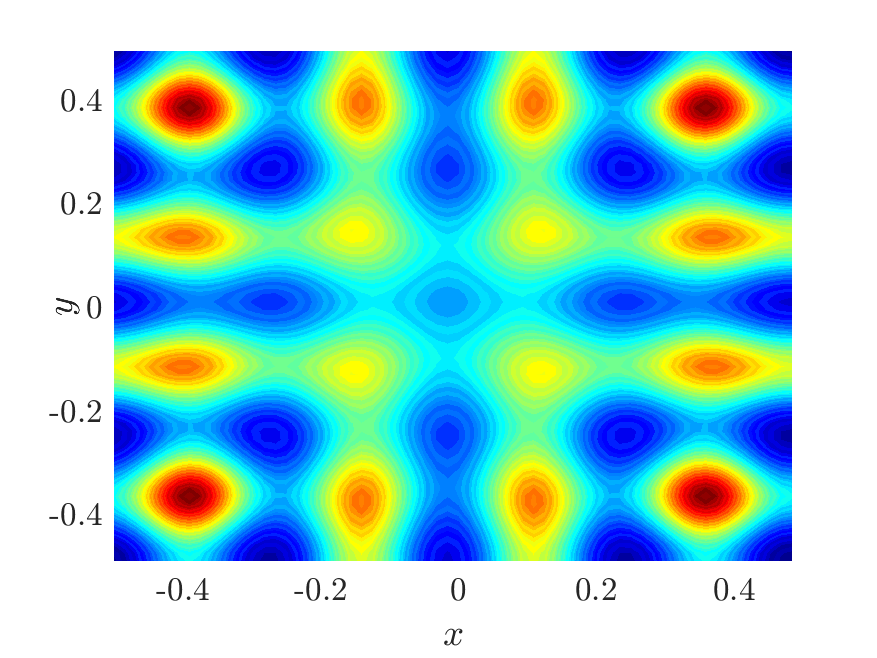}\\
    \includegraphics[width=0.32\textwidth]{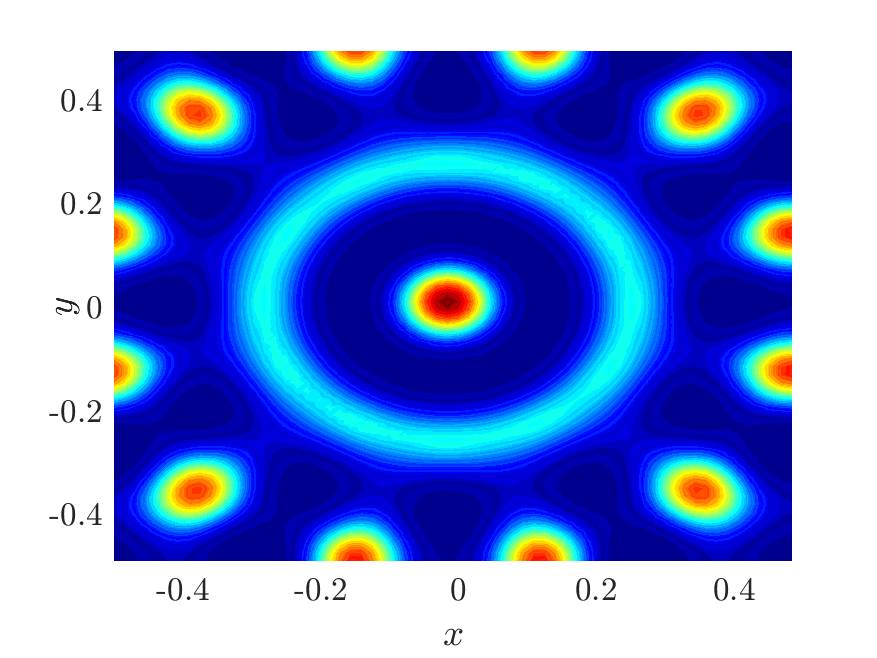}
     \includegraphics[width=0.32\textwidth]{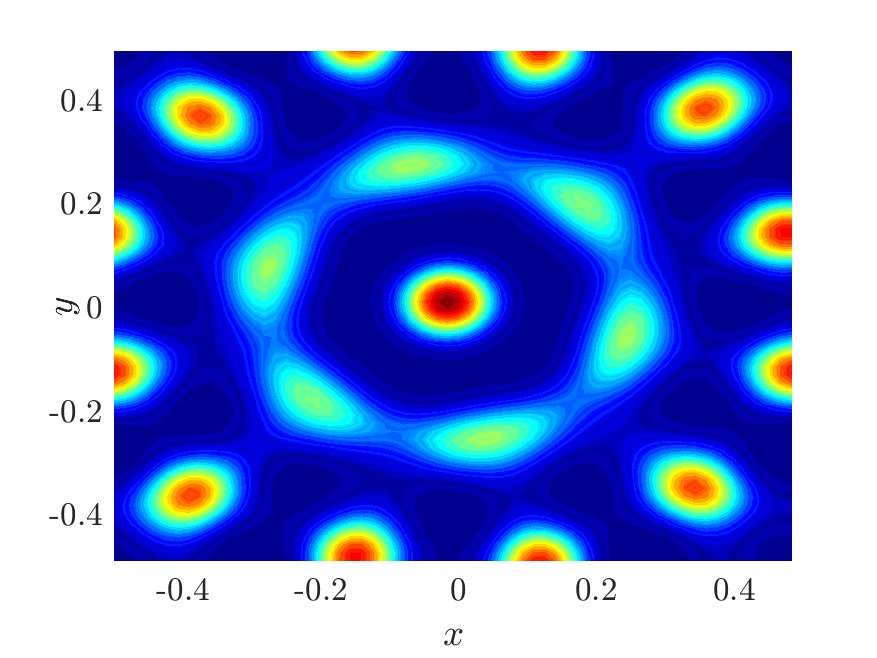}
          \includegraphics[width=0.32\textwidth]{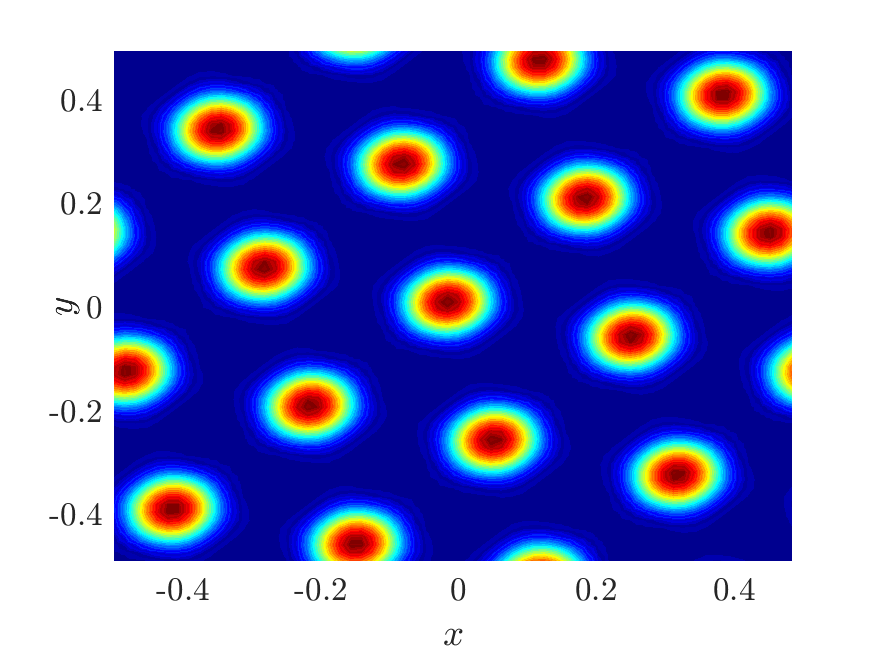}
    \caption{Time-transient patterns of the mean by solving the PDE \eqref{eqn:2} numerically with $\kappa = 55$ (close to the first three bifurcation points) and $\tau=10\,$m$s$ and otherwise keeping the settings as in the left plot of Figure \ref{fig:radsym-vs-nonradsym}. From top left to bottom right, moving horizontally, snapshots at $t=40, 220, 1500, 1810, 2190,$ and $2400\,$m$s$. }
    \label{fig:timetransient}
\end{figure}

To illustrate that the hexagonal pattern numerically appears as a stable stationary state of \eqref{eqn:2}, a typical time evolution for the PDE \eqref{eqn:2} can be found in Figure \ref{fig:timetransient}. Here $\bar\rho(x,t)$ is plotted at different times for a $\kappa$ close to the three leftmost bifurcation points of the left plot in Figure \ref{fig:radsym-vs-nonradsym}. The time series is obtained by introducing a machine precision perturbation of spatially homogeneous initial data at one position and then using the same numerical method as in \cite{CHS} on a $64\times64$-grid. We observe that at first the patterns are coordinate-wise even and go through different shapes before stabilising into the bottom left pattern of Figure \ref{fig:timetransient}. The solution stays close to this pattern for some time. Eventually a symmetry breaking event happens (bottom center of Figure \ref{fig:timetransient}) and the solution stabilises into a hexagonal pattern (bottom right of Figure \ref{fig:timetransient}) which numerically has the same coordinate-wise even symmetry axes as \eqref{eq:hex}.

As pointed out in \cite{CHS}, numerical experiments indicate that this pattern does not occur continuously as a stable bifurcation branch at a point on the spatially homogeneous branch, see Figure \ref{fig:bifurcationdiagram}. Figure \ref{fig:bifurcationdiagram} shows bifurcation diagrams with respect to $\kappa$. The diagrams are created as in \cite{CHS} for \eqref{eq:4PDE} by numerically evolving \eqref{eqn:2} up to stabilisation to a steady value for a given $\kappa$. Then, the stationary states for increasing or decreasing $\kappa$ are recursively
computed for smaller (r2l) or larger (l2r) values of $\kappa$ by taking as initial data the already computed steady state.

Indicated by a sudden jump in the difference between $\max_x \bar \rho (x)$ and $\min_x \bar \rho (x)$ in the left plot of Figure \ref{fig:bifurcationdiagram}, there is what appears to be a discontinuous phase transition between the spatially homogeneous steady state and the hexagonal steady state. This is also apparent in the bifurcation diagram of $\|\bar \rho\|_{L^2(\mathbb{T}^2)}$ to the right. Also note the different positions of the jumps for decreasing and increasing $\kappa$, which indicates a hysteresis phenomenon. Finally, the loss of linear stability (see Lemma \ref{rem:linear}) is to the right of both jumps (indicated by a red dot). This was also remarked in \cite{CHS} for \eqref{eq:4PDE}.

\begin{figure}[ht]
    \centering
    \includegraphics[width=0.45\textwidth]{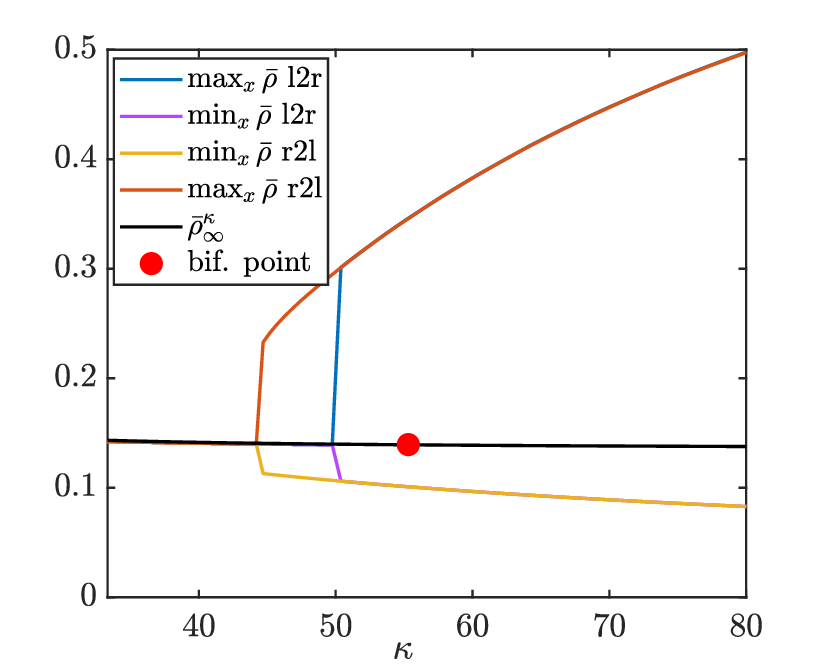}
    \includegraphics[width=0.45\textwidth,trim={0 -0.5em 0 2em},clip]{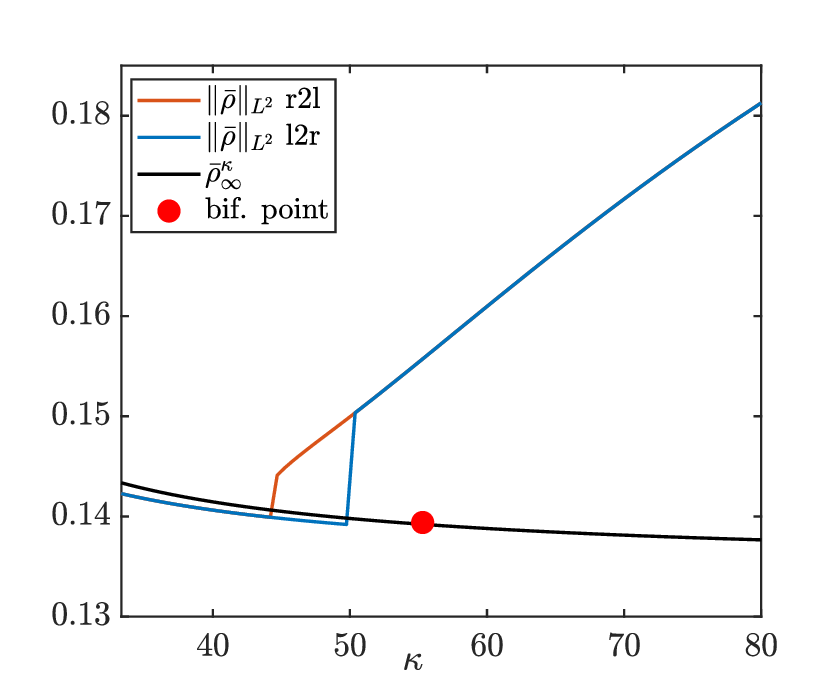}
    \caption{Bifurcation diagrams with the settings of the left plot in Figure \ref{fig:radsym-vs-nonradsym} showing the loss of stability of the spatially homogeneous state $\bar \rho_\infty^\kappa$ to the hexagonal state (see bottom right plot of Figure \ref{fig:timetransient}). Left: maximum and minimum over $x$, right: the $L^2(\mathbb T ^2)$-norm. The continuation algorithm is computed both for increasing (l2r) and decreasing (r2l) $\kappa$. The red dot marks the point of loss of linear stability.}
    \label{fig:bifurcationdiagram}
\end{figure}

\section{Perspectives}\label{sec:future}
Although having taken a crucial step towards understanding the patterning of the stochastic neural field models under consideration, we have only touched the surface of what this framework allows to explore. One direction is to be even less restrictive with the symmetry assumptions on the connectivity kernel $W$.

A further understanding of the bifurcations is needed to fully understand the emergence and stability of the hexagonal stationary state for \eqref{eqn:2} and \eqref{eq:4PDE} in the case of radially symmetric connectivity kernels. From the discussion in the previous section, it is clear that the systems possess numerically stable hexagonal steady states, and that no hexagonal steady states could be proven to exist when utilising the square periodicity of the problem. Two plausible explanations stand out regarding the emergence as the noise parameter $\sigma$ is varied. One is that there could be other bifurcations occurring along the spatially non-homogeneous branches leading to the hexagonal pattern. This conjecture is supported by the bifurcation diagrams in \cite[Figure 1]{VCF2015} for a related neural field model. This is however by no means evidence that this has to be the case for \eqref{eqn:2} as the model and bifurcation parameter in \cite{VCF2015} do differ from \eqref{eqn:2}. Furthermore, the stable patterns with square periodicity displayed in \cite{VCF2015} are only \emph{almost} hexagonal. The second is that the hexagonal pattern does in fact numerically appear as a local branch bifurcating continuously from the spatially homogeneous branch. A first thought is to again follow the approach of Dionne \cite{Dionne}, but this time restrict the functional to a hexagonal lattice. This is however complicated due to the convolution in \eqref{eqn:2}, which leads to the hexagonal pattern in Figure \ref{fig:timetransient}, having square periodicity.

Regarding the time-dependent system \eqref{eqn:2}, we have yet to provide a complete existence and well-posedness study, which is challenging due to the nonlinear boundary condition. Furthermore, nonlinear stability is a completely open and tough question. Finding a solution is crucial to fully understand the dynamics, and this process will require sophisticated analytical tools. Finally, to extend the PDE models considered here to a more realistic setting, an investigation of space correlated noise and time delays is pertinent as well.  

\subsection*{Acknowledgements}
JAC and PR were supported by the Advanced Grant Nonlocal-CPD (Nonlocal PDEs for Complex Particle Dynamics: Phase Transitions, Patterns and Synchronization) of the European Research Council Executive Agency (ERC) under the European Union's Horizon 2020 research and innovation programme (grant agreement No. 883363).
JAC was also supported by the grants EP/T022132/1 and EP/V051121/1 of the Engineering and Physical Sciences Research Council (EPSRC, UK).
Parts of this research was conducted while SS was an academic visitor at the Mathematical Institute, University of Oxford, and the author would like to thank the institution for its warm hospitality. The authors would like to thank the Isaac Newton Institute for Mathematical Sciences, Cambridge, for support and hospitality during the programme ``Frontiers in kinetic theory'' where parts of the work on this paper were undertaken, supported by EPSRC grant no EP/R014604/1. Finally, the authors would also like to thank an anonymous referee for pointing us in the direction of equivariant bifurcation theory.

\bibliographystyle{abbrv}
\bibliography{references.bib}

\appendix
\section{General results in bifurcation theory}

This appendix is adapted from the material contained in \cite{Kielhofer2012} and is meant as a brief introduction to the bifurcation theory utilised to prove Theorem \ref{thm:main}. The corresponding equivariant bifurcation theory needed in the proof of Theorem \ref{thm:ebl} can be found for example in the book by Chossat and Lauterbach \cite{CL-equivariantbif}.

Let $X,Y,Z$ be three real Banach spaces, $U\subset X$, $V \subset Y$ be two open sets and consider a continuous mapping $F:U\times V\to Z$. We provide in this appendix some abstract results about the problem
\begin{equation}\label{eqn:bifurcation_general}
    \mathcal H(x,\kappa) = 0,\qquad x\in U,\ \kappa\in V.
\end{equation}
We see $X$ as a state of configurations for a system and $Y$ as a set of parameters. Given a solution $(x_0,\kappa_0)$ to \eqref{eqn:bifurcation_general}, we want to know when a small change of the parameters around $\kappa_0$ entails a significant change in the configuration. This can happen only when the implicit function theorem cannot be applied, for example when
\[ D_x \mathcal H (x_0,\kappa_0) : X\to Z  \]
is not bijective.

Let us consider the case of a one dimensional parameter space, $V\subset Y = \R$. We make assumptions on the existence of a ``trivial'' branch of solutions to \eqref{eqn:bifurcation_general} and on the smoothness of $\mathcal H$.

\begin{hyp}\label{as:trivial}
    The open set $U$ contains 0 and for all $\kappa\in V$, $\mathcal H(0,\kappa)=0$. 
\end{hyp}
Note that if the branch is of the form $(x(s),\kappa(s))$, with $x(s)$ non-constant, it is possible to rescale the functional $\mathcal H$ to obtain a trivial branch like in the assumption. It is what we do in the beginning of the proof of Theorem \ref{thm:main}.

\begin{hyp}\label{as:smooth}
    We assume that $\mathcal H\in C^2(U\times V,Z)$. For some value $\kappa_0\in V$, we assume that $\mathcal H(\cdot,\kappa_0)$ is a Fredholm operator of index 0 such that
    \[ \dim\big(\ker(D_x \mathcal H(0,\kappa_0))\big) = \mathrm{codim}\big(\mathrm{range}(D_x \mathcal H(0,\kappa_0)\big) = 1   \]
\end{hyp}

We now state a result about the existence of a second branch of solutions crossing the trivial branch for some value $\kappa_0\in V$ of the parameter, which is one of the core results that we use in this article. It can be found in \cite[Th I.5.1.]{Kielhofer2012}.

\begin{theorem}[Crandall--Rabinowitz Theorem]\label{thm:CR}
    Grant Assumptions \ref{as:trivial} and \ref{as:smooth}. Assume that
    \begin{equation*}
        \ker\big(D_x \mathcal H(0,\kappa_0)\big) = \mathrm{span}(\omega_0),\qquad \omega_0\in U,\ \norme{\omega_0}=1 
    \end{equation*}
    and
    \begin{equation*}
        D^2_{x\kappa}  \mathcal H(0,\kappa_0) [\omega_0] \notin \mathrm{range}\big( D_x \mathcal H(0,\kappa_0) \big).
    \end{equation*}
    Then there exists a nontrivial continuously differentiable curve through $(0,\kappa_0)$
    \begin{equation}\label{eq:nontrivial}
        \{\ (x(s),\kappa(s)) \ | \ s\in(-\delta,\delta), \ (x(0),\kappa(0))=(0,\kappa_0), \delta>0 \ \},
    \end{equation}
    such that
    \begin{equation*}
        \forall s\in(-\delta,\delta),\quad \mathcal H(x(s),\kappa(s))=0,
    \end{equation*}
    and in a neighbourhood $U_1\times V_1 \subset U\times V$ of $(0,\kappa_0)$, all the solutions to \eqref{eqn:bifurcation_general} are either on the trivial solution line or on the nontrivial solution line \eqref{eq:nontrivial}.
\end{theorem}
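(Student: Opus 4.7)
The plan is to apply a Lyapunov--Schmidt reduction that turns the functional equation $\mathcal H(x,\kappa)=0$ into a single scalar equation, and then to solve the latter by the implicit function theorem. Set $L:=D_x\mathcal H(0,\kappa_0)$. The Fredholm index-zero hypothesis together with $\ker L=\mathrm{span}(\omega_0)$ and $\mathrm{codim}(\mathrm{range}(L))=1$ allow me to pick closed topological complements and write $X=\mathrm{span}(\omega_0)\oplus X_1$ and $Z=\mathrm{range}(L)\oplus N$, where $\dim N=1$. Let $P:Z\to\mathrm{range}(L)$ denote the associated continuous projection. Writing $x=s\omega_0+y$ with $s\in\R$ and $y\in X_1$, the equation $\mathcal H(x,\kappa)=0$ splits into the \emph{range equation} $P\mathcal H(s\omega_0+y,\kappa)=0$ and the one-dimensional \emph{bifurcation equation} $(I-P)\mathcal H(s\omega_0+y,\kappa)=0$.

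I would next dispatch the range equation. By construction, $L|_{X_1}:X_1\to\mathrm{range}(L)$ is a Banach-space isomorphism, and it coincides with the partial Fréchet derivative in $y$ at $(y,s,\kappa)=(0,0,\kappa_0)$ of the map $F(y,s,\kappa):=P\mathcal H(s\omega_0+y,\kappa)$. Assumption \ref{as:smooth} guarantees $F\in C^2$, so the implicit function theorem produces a unique $C^1$ (in fact $C^2$) solution $(s,\kappa)\mapsto y(s,\kappa)$ in a neighbourhood of $(0,\kappa_0)$, with $y(0,\kappa_0)=0$. Assumption \ref{as:trivial} says that $y\equiv0$ solves the range equation along $s=0$; uniqueness in the implicit function theorem then forces $y(0,\kappa)=0$ in a whole neighbourhood of $\kappa_0$, which by the smooth factorisation lemma lets me write $y(s,\kappa)=s\,\psi(s,\kappa)$ for a $C^1$ map $\psi$.

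Substituting back, the bifurcation equation becomes $(I-P)\mathcal H(s\omega_0+s\psi(s,\kappa),\kappa)=0$. By Assumption \ref{as:trivial} this expression vanishes identically at $s=0$, so a second use of the smooth factorisation lemma yields a $C^1$ scalar function $\Phi(s,\kappa)$ with $\mathcal H(s\omega_0+s\psi,\kappa)=s\,\Phi(s,\kappa)\,n_0$ (identifying $N$ with $\R$ via a basis vector $n_0$), and $\Phi(0,\kappa_0)=0$. The decisive point is to compute $\partial_\kappa\Phi(0,\kappa_0)$: differentiating the definition $\mathcal H(s\omega_0+s\psi,\kappa)=s\Phi(s,\kappa)n_0$ twice, once in $s$ at $s=0$ and once in $\kappa$, and using $\psi(0,\kappa_0)=0$ together with $\mathcal H(0,\kappa)\equiv0$, one obtains the identification
\begin{equation*}
\partial_\kappa\Phi(0,\kappa_0)\,n_0=(I-P)\,D^{2}_{x\kappa}\mathcal H(0,\kappa_0)[\omega_0].
\end{equation*}
The transversality hypothesis $D^{2}_{x\kappa}\mathcal H(0,\kappa_0)[\omega_0]\notin\mathrm{range}(L)$ is precisely the statement that the right-hand side is nonzero, so $\partial_\kappa\Phi(0,\kappa_0)\neq0$.

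With that non-degeneracy in hand, the implicit function theorem applied to $\Phi(s,\kappa)=0$ at $(0,\kappa_0)$ produces a unique $C^1$ curve $s\mapsto\kappa(s)$ defined for $s\in(-\delta,\delta)$ with $\kappa(0)=\kappa_0$. Setting $x(s):=s\omega_0+s\psi(s,\kappa(s))$ gives the nontrivial branch claimed in \eqref{eq:nontrivial}, while the uniqueness statements in both applications of the implicit function theorem furnish exactly the local dichotomy that every solution in a sufficiently small $U_1\times V_1$ lies either on the trivial line or on this curve. The main obstacle I expect is the careful bookkeeping in the chain-rule computation of $\partial_\kappa\Phi(0,\kappa_0)$: one must differentiate the range equation with respect to $\kappa$ at $(s,\kappa)=(0,\kappa_0)$ to express $\partial_\kappa\psi(0,\kappa_0)$ in terms of $L|_{X_1}^{-1}$, and then verify that the resulting $P$-contribution cancels out in $\Phi$, leaving only the $(I-P)$-component of $D^{2}_{x\kappa}\mathcal H(0,\kappa_0)[\omega_0]$. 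Everything else is a direct translation of the Fredholm decomposition and two applications of the implicit function theorem.
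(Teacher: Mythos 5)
This theorem is not proved in the paper at all: the appendix states it as a classical result and points to Kielh\"ofer \cite[Th.~I.5.1]{Kielhofer2012}, so there is no in-paper argument to compare against. Your Lyapunov--Schmidt reduction is precisely the standard proof used in that reference, and your outline is essentially correct: split $X=\mathrm{span}(\omega_0)\oplus X_1$, $Z=\mathrm{range}(L)\oplus N$, solve the range equation by the implicit function theorem, use the trivial branch to factor out $s$, and solve the reduced scalar equation $\Phi(s,\kappa)=0$ via the transversality condition; the local dichotomy then follows from the two uniqueness statements exactly as you say. Two small verifications should be made explicit rather than asserted. First, $\Phi(0,\kappa_0)=0$ is not automatic from the factorisation alone; it follows because the $s$-derivative of the bifurcation map at $(0,\kappa_0)$ is $(I-P)L[\omega_0+\psi(0,\kappa_0)]$, which vanishes since $L\omega_0=0$ and $(I-P)$ annihilates $\mathrm{range}(L)$. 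Second, the identity $\partial_\kappa\Phi(0,\kappa_0)\,n_0=(I-P)D^2_{x\kappa}\mathcal H(0,\kappa_0)[\omega_0]$ genuinely needs $\psi(0,\kappa_0)=\partial_s y(0,\kappa_0)=0$, which you assert but do not derive: it comes from differentiating the \emph{range} equation in $s$ at $(0,\kappa_0)$, giving $L\,\partial_s y(0,\kappa_0)=0$ and hence $\partial_s y(0,\kappa_0)=0$ by injectivity of $L|_{X_1}$. By contrast, the step you flag as the main obstacle --- differentiating the range equation in $\kappa$ to express $\partial_\kappa\psi(0,\kappa_0)$ through $L|_{X_1}^{-1}$ --- is not actually needed, since that contribution enters only as $L[\partial_\kappa\psi(0,\kappa_0)]\in\mathrm{range}(L)$ and is killed by $(I-P)$ anyway. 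With these two one-line checks inserted, your argument is a complete and faithful rendition of the cited proof.
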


We call the intersection $(0,\kappa_0)$ of these two curves a \textit{bifurcation point}. Note that the second order derivative $D^2_{x\kappa}  \mathcal H(0,\kappa_0): \omega\mapsto D^2_{x\kappa}  \mathcal H(0,\kappa_0)[\omega]$ while applied to the point $(0,\kappa_0)$ lies in $\mathcal L (X,Z)$ the set of bounded linear operators from $X$ to $Z$.

With more derivatives of $\mathcal H$, it is possible to harvest more information about the local behaviour of the nontrivial curve of solutions. We state here a particular case of the general results from \cite[Secs. I.5--6]{Kielhofer2012}.

\begin{theorem}[Characterisation of the branch]\label{thm:characterisation}
    Assume the hypotheses of Theorem \ref{thm:CR}, that $\mathcal H\in C^3(U\times V,Z)$, and that $X$ is a Hilbert space endowed with a scalar product $\pscal{\cdot}{\cdot}$. Then, the function $s\mapsto \kappa(s)$ is twice differentiable. The tangent vector to the nontrivial curve at the bifurcation point $(0,\kappa_0)$ is $(\omega_0, \kappa'(0))$, which means that there exists a bounded continuous function $r:\mathrm{span}(\omega_0)\times V \to L_S^2(\mathbb T^d)$ such that $r(0,0)=0$, and
    \begin{equation*}
        x(s) = s\omega_0 + r(s\,\omega_0,\kappa(s)),\qquad \lim_{s\to 0} \dfrac{\norme{r(s\,\omega_0,\kappa(s))}}{|s| + |\kappa(s) - \kappa(0)|}=0.
    \end{equation*}
    The value $\kappa'(0)$ is given by
    \begin{equation*}
        \kappa'(0) = -\dfrac12 \dfrac{\pscal{D^2_{xx} \mathcal H(0,\kappa_0) [\omega_0,\omega_0]}{\omega_0} }{\pscal{D^2_{x\kappa}  \mathcal H(0,\kappa_0)[\omega_0]}{\omega_0}}.
    \end{equation*}
    Moreover, if $\kappa'(0)=0$, then
    \begin{equation*}
        \kappa''(0) = -\dfrac13 \dfrac{\pscal{D^3_{xxx} \mathcal H(0,\kappa_0) [\omega_0,\omega_0,\omega_0]}{\omega_0} }{\pscal{D^2_{x\kappa}  \mathcal H(0,\kappa_0)[\omega_0]}{\omega_0}},
    \end{equation*}
    and
    \begin{equation*}
        x(s) = s\omega_0 + o(s), \qquad s\in(-\delta,\delta).
    \end{equation*}
\end{theorem}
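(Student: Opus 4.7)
The plan is to carry out a Lyapunov--Schmidt reduction together with successive implicit differentiation of the identity $\mathcal H(x(s),\kappa(s))=0$ along the nontrivial curve, following the treatment in \cite{Kielhofer2012}. First I would parametrize the branch produced by Theorem \ref{thm:CR} in the canonical form $x(s)=s\omega_0+\psi(s)$, with $\psi(s)\in\omega_0^{\perp}$, $\psi(0)=0$, and, after an admissible reparametrization of $s$, $\psi'(0)=0$. This immediately encodes the tangent vector $(x'(0),\kappa'(0))=(\omega_0,\kappa'(0))$ and the claimed remainder bound. Since $\mathcal H\in C^{3}$, the scalar bifurcation equation produced by the Lyapunov--Schmidt reduction (solving implicitly for $\psi$ in the range complement and then projecting the result onto $\omega_0$) is of class $C^{3}$ jointly in $(s,\kappa)$, so its implicit solution $\kappa(s)$ is of class $C^{2}$.

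Two structural facts make pairing with $\omega_0$ the right operation. First, Assumption \ref{as:trivial} gives $\mathcal H(0,\kappa)\equiv 0$, from which successive differentiation in $\kappa$ yields $D_{\kappa}\mathcal H(0,\kappa)=0$ and $D^{2}_{\kappa\kappa}\mathcal H(0,\kappa)=0$ for every $\kappa\in V$, as well as $D^{3}_{\kappa\kappa\kappa}\mathcal H(0,\kappa_0)=0$. Second, in the Hilbert setting $X=Z$ relevant for the applications, $D_{x}\mathcal H(0,\kappa_0)$ is self-adjoint (as in the concrete case of $\bar{\mathcal G}$, where $W\ast\cdot$ is self-adjoint on $L^{2}_{S}(\mathbb T^d)$), so its range equals $\omega_0^{\perp}$, and consequently $\langle D_{x}\mathcal H(0,\kappa_0)[v],\omega_0\rangle=0$ for every $v\in X$.

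For $\kappa'(0)$, differentiating the identity $\mathcal H(x(s),\kappa(s))=0$ twice in $s$ and evaluating at $s=0$ gives
\begin{equation*}
D^{2}_{xx}\mathcal H(0,\kappa_0)[\omega_0,\omega_0]+D_{x}\mathcal H(0,\kappa_0)[x''(0)]+2\kappa'(0)\,D^{2}_{x\kappa}\mathcal H(0,\kappa_0)[\omega_0]+(\kappa'(0))^{2}D^{2}_{\kappa\kappa}\mathcal H(0,\kappa_0)+\kappa''(0)\,D_{\kappa}\mathcal H(0,\kappa_0)=0.
\end{equation*}
The last two terms vanish by Assumption \ref{as:trivial}, the second dies after pairing with $\omega_0$ by self-adjointness, and the transversality hypothesis $\langle D^{2}_{x\kappa}\mathcal H(0,\kappa_0)[\omega_0],\omega_0\rangle\neq 0$ then yields the announced expression for $\kappa'(0)$. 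For $\kappa''(0)$ under the additional assumption $\kappa'(0)=0$, I would differentiate once more in $s$, evaluate at $s=0$, and pair with $\omega_0$: every term carrying a factor $\kappa'(0)$ drops, every pure $\kappa$-derivative term dies by the trivial-branch relations, and the contribution $D_{x}\mathcal H(0,\kappa_0)\,x'''(0)$ is annihilated by self-adjointness. Expanding $(d/ds)^{3}\mathcal H(x(s),\kappa(s))$ via Fa\`a di Bruno, what remains after these cancellations is the balance of the cubic term $\langle D^{3}_{xxx}\mathcal H(0,\kappa_0)[\omega_0,\omega_0,\omega_0],\omega_0\rangle$ against $3\kappa''(0)\langle D^{2}_{x\kappa}\mathcal H(0,\kappa_0)[\omega_0],\omega_0\rangle$, which produces the stated formula.

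The main obstacle will lie precisely in this third-derivative step: the expansion produces a cross term $\langle D^{2}_{xx}\mathcal H(0,\kappa_0)[\omega_0,\psi''(0)],\omega_0\rangle$, where $\psi''(0)\in\omega_0^{\perp}$ is determined by $D_{x}\mathcal H(0,\kappa_0)\,\psi''(0)=-D^{2}_{xx}\mathcal H(0,\kappa_0)[\omega_0,\omega_0]$ (solvable since $\kappa'(0)=0$ is equivalent to $D^{2}_{xx}\mathcal H(0,\kappa_0)[\omega_0,\omega_0]\in\omega_0^{\perp}$), and this term is not obviously killed by pairing with $\omega_0$ in the absence of a variational (gradient) structure. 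Carefully tracking this cross term --- showing either that it is absorbed into the statement via a suitable reinterpretation of $D^{3}_{xxx}\mathcal H$ after Lyapunov--Schmidt reduction, or that it cancels through symmetry of the third partial --- and verifying the combinatorial factor $3$ in front of $\kappa''(0)$ is the delicate part. A secondary subtlety is checking that the admissible reparametrization giving $\psi'(0)=0$ is compatible with the $C^{2}$-regularity of $\kappa(s)$ required to differentiate three times.
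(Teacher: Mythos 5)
The paper does not actually prove Theorem \ref{thm:characterisation}: the appendix states it without proof as a particular case of Kielh\"ofer's results (\cite[Secs.~I.5--6]{Kielhofer2012}). Your framework --- Lyapunov--Schmidt reduction followed by implicit differentiation of $\mathcal H(x(s),\kappa(s))=0$ along the branch in the canonical parametrisation $x(s)=s\omega_0+\psi(s)$, $\psi(s)\perp\omega_0$ --- is the standard route and is indeed what underlies the cited source. Your observations that the trivial branch kills all pure $\kappa$-derivatives and that pairing with $\omega_0$ annihilates $D_x\mathcal H(0,\kappa_0)[\,\cdot\,]$ once one works in the Hilbert/self-adjoint setting are correct, and the computation of $\kappa'(0)$ is complete. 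Note, however, that $X=Z$ and self-adjointness of $D_x\mathcal H(0,\kappa_0)$ are not hypotheses of the theorem as stated; you are silently adding them (they do hold in the paper's application, where the operator is a multiple of $W\ast\cdot$, but the appendix theorem is phrased more generally, with $\mathrm{range}$ of codimension one and $\omega_0$ playing the role of a co-kernel vector).

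The genuine gap is exactly the one you flag and do not close. Differentiating three times with $\kappa'(0)=0$ and pairing with $\omega_0$ yields
\begin{equation*}
\pscal{D^3_{xxx}\mathcal H(0,\kappa_0)[\omega_0,\omega_0,\omega_0]}{\omega_0}
+3\,\pscal{D^2_{xx}\mathcal H(0,\kappa_0)[\omega_0,\psi''(0)]}{\omega_0}
+3\,\kappa''(0)\,\pscal{D^2_{x\kappa}\mathcal H(0,\kappa_0)[\omega_0]}{\omega_0}=0,
\end{equation*}
with $\psi''(0)$ the unique solution in $\omega_0^\perp$ of $D_x\mathcal H(0,\kappa_0)[\psi''(0)]=-D^2_{xx}\mathcal H(0,\kappa_0)[\omega_0,\omega_0]$. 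Your hope that the middle term ``cancels through symmetry of the third partial'' is not warranted: it is a bona fide part of Kielh\"ofer's third-order bifurcation coefficient, and it does \emph{not} vanish in general. In particular it does not vanish in the paper's own setting, where $D^2_{\bar\rho\bar\rho}\bar{\mathcal G}(\bar\rho_\infty^\kappa,\kappa)[\omega_k,\omega_k]$ is a nonzero multiple of $\omega_k^2$ (only its $\omega_k$-component is zero, which is what gives $\kappa'(0)=0$), so $\psi''(0)\neq 0$ and the product $\omega_k\,\psi''(0)$ generically has a nonzero $\omega_k$-component. The formula in the statement is therefore only literally correct either when $D^2_{xx}\mathcal H(0,\kappa_0)[\omega_0,\omega_0]=0$ as an element of $Z$ (not merely orthogonal to $\omega_0$), or when $D^3_{xxx}\mathcal H$ is silently understood to be the third $s$-derivative of the \emph{reduced} bifurcation function. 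To complete the proof you must either insert the cross term $3\pscal{D^2_{xx}\mathcal H[\omega_0,\psi''(0)]}{\omega_0}$ into the numerator, or explicitly invoke the reduced-function interpretation; noting explicitly which of these you take is essential, since it also affects the downstream computation of $\kappa''(0)$ in Theorem \ref{thm:main_2}. Your reparametrisation remark is fine: the Lyapunov--Schmidt splitting already gives $\psi(0)=0$, $D_v\Psi(0,\kappa_0)=0$, hence $\psi'(0)=0$ once $\kappa'(0)=0$, and no further ad hoc reparametrisation is needed.
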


When $\kappa'(0)\neq0$, the bifurcation is called \textit{transcritical}. If $\kappa'(0)=0$, it is called a pitchfork bifurcation, which is called \textit{subcritical} when $\kappa''(0)<0$ and \textit{supercritical} when $\kappa''(0)>0$.

\section{A supplementary proof}
\begin{proof}[Proof of Lemma \ref{lm:g_eta}]
    Notice that by \eqref{eqn:rho_0},
    \[\dfrac1{L^d} - L^d\bar \rho_\infty  \left(\bar \rho_\infty - \dfrac{ \Phi_0^\kappa}{L^d}\right)\kappa = \dfrac1{L^d} - L^d\bar \rho_\infty \rho_\infty(0),  \]
    and using the expression for the stationary state \eqref{eq:stationarystateequ}, we have
    \[ \rho_\infty(0) = \dfrac{\sqrt{2\kappa} \e^{-\frac\kappa2  (\Phi_0^\kappa)^2}}{ L^d \sqrt{\pi} \left(1+\mathrm{erf}(\frac{ \Phi_0^\kappa\sqrt{\kappa}}{\sqrt2})\right)}.  \]
    Moreover, using that
    \[ \int_0^{+\infty} \e^{-\frac\kappa2(s- \Phi_0^\kappa)^2}\diff s = \sqrt{\dfrac{\pi}{2\kappa}}\left(1+\mathrm{erf}\left( \dfrac{ \Phi_0^\kappa\sqrt\kappa}{\sqrt2} \right)\right),  \]
    we compute
    \begin{align}\label{eqn:rho_bar_eta} \bar \rho_\infty & = \dfrac{\displaystyle \int_0^{+\infty} (s- \Phi_0^\kappa) \e^{-\frac\kappa2(s- \Phi_0^\kappa)^2}\diff s +  \Phi_0^\kappa\int_0^{+\infty} \e^{-\frac\kappa2(s- \Phi_0^\kappa)^2}\diff s }{\displaystyle L^d \int_0^{+\infty} \e^{-\frac\kappa2(s- \Phi_0^\kappa)^2}\diff s } \nonumber \\
    & = \dfrac{\e^{-\frac\kappa2  (\Phi_0^\kappa)^2}}{\displaystyle\kappa L^d \int_0^{+\infty}\e^{-\frac\kappa2(s- \Phi_0^\kappa)^2}\diff s}   + \dfrac{ \Phi_0^\kappa}{L^d}
	    = \dfrac{\sqrt2 \e^{-\frac\kappa2  (\Phi_0^\kappa)^2}}{ L^d \sqrt{\pi\kappa} \left(1+\mathrm{erf}(\frac{ \Phi_0^\kappa\sqrt{\kappa}}{\sqrt2})\right)}   + \dfrac{ \Phi_0^\kappa}{L^d} .    \end{align}
    Hence,
    \begin{align*}
             \dfrac1{L^d} - L^d\bar \rho_\infty  \left(\bar \rho_\infty - \dfrac{ \Phi_0^\kappa}{L^d}\right)\kappa & = \dfrac1{L^d}\left( 1 -   \frac{2}{\sqrt{\pi}}\frac{\e^{-\frac\kappa2  (\Phi_0^\kappa)^2}}{1+\mathrm{erf}\left(\tfrac{ \Phi_0^\kappa\sqrt{\kappa}}{\sqrt2}\right)}\left[\frac{1}{\sqrt{\pi}}\frac{\e^{-\frac\kappa2  (\Phi_0^\kappa)^2}}{1+\mathrm{erf}\left(\tfrac{ \Phi_0^\kappa\sqrt{\kappa}}{\sqrt2}\right)}+\sqrt{\frac{\kappa}{2}} \Phi_0^\kappa\right] \right)\\
             & =  \frac{1}{L^d} g\left(\sqrt{\tfrac{\kappa}{2}} \Phi_0^\kappa\right).
    \end{align*}
    Let us denote 
    \[ f(\eta) =  \dfrac{1}{\sqrt\pi}\frac{\exp (-\eta^2)}{1+\erf (\eta)}. \]
    Since the error function $\erf(\eta)$ is differentiable on $\R$ and satisfies
    \[ \erf'(\eta)= \dfrac{2}{\sqrt\pi} \e^{-\eta^2},   \]
	then $f$ has the following property:
	\[  f'(\eta) = - 2 f(\eta) (f(\eta)+\eta).\]
    It follows that, for all $\eta\in\R_+$,
    \begin{equation*}
    g'(\eta) = 2f(\eta) \left[ 2(f(\eta) + \eta)(2f(\eta)+\eta) - 1\right].   
    \end{equation*}
    Let us denote
    \begin{equation}\label{eqn:def_h} w(\eta) = (f(\eta) + \eta)(2f(\eta)+\eta) \end{equation}
    for all $\eta\in\R_+$,
	\begin{align*}
	  w'(\eta)  & =  \big(1 - 2 f(\eta))[f(\eta)+\eta]\big)(2f(\eta)+\eta) + (f(\eta)+\eta)\big( 1-4f(\eta)[f(\eta)+\eta] \big)\\
	  & =  g(\eta)(2f(\eta)+\eta) + (2g(\eta)-1)(f(\eta)+\eta) \\
	  & =  (4g(\eta)-1) f(\eta) + (3g(\eta)-1) \eta.
	\end{align*}
	 We have
	 \[   w(0) \,=\, \dfrac2\pi \,  >\, \frac12,\]
	 so $g'(0) > 0$ and by continuity of $g'$, $g$ is increasing on a neighbourhood of 0.	Assume by contradiction that $g$ is not increasing on $\R_+$. Let $\bar\eta = \inf\{ \eta\in(0,+\infty) \ | \ g'(\eta) = 0 \}$. For all $\eta\in[0,\bar\eta]$ we have,
	\[ 4g(\eta)-1\,  >\,  3g(\eta)-1 \, \geqslant\, 3\left(1-\frac{2}{\pi}\right)-1 \,>\, 0.  \]
	Therefore, $w$ is increasing on $[0,\bar\eta]$. But then,
	\[ g'(\bar\eta) = 4f(\bar\eta) \left( w(\bar\eta) - \frac12\right) > 4f(\bar\eta) \left( w(0) - \frac12\right)  > 0, \]
    which constitutes a contradiction.
    
    Therefore, $g$ is an increasing function on $\R_+$. As a consequence, for all $\eta\in\R_+$,
    \[ 1-\frac{2}{\pi}\, =\, g(0) \, \leqslant \,  g(\eta)  \, < \, \lim_{\eta\to +\infty} g(\eta)\, =\, 1. \]
\end{proof}

\end{document}